\documentclass[reqno]{amsart}

\usepackage{etex}
\usepackage{amsfonts}
\usepackage{amscd}
\usepackage{amsbsy}
\usepackage{mathtools}
\usepackage{amsxtra}
\usepackage{amssymb}
\usepackage{epsfig}
\usepackage{epic,eepic}
\usepackage{graphicx}
\usepackage[all]{xy}
\usepackage{psfrag}
\usepackage{amsmath}
\usepackage{amsthm}
\usepackage{setspace}
\usepackage{hyperref}
\usepackage{url}
\usepackage{here}
\usepackage{todonotes}
\newlength{\halfbls}\setlength{\halfbls}{.5\baselineskip}

\usepackage{xparse}
\usepackage{xspace}
\usepackage[normalem]{ulem}

\usepackage{booktabs}
\usepackage{environ}
\usepackage{blindtext}
\usepackage{caption}

\usepackage{tikz-cd}

\usepackage{tikz}
\usetikzlibrary{patterns}
\usetikzlibrary{calc}
\usetikzlibrary{decorations.pathreplacing,decorations.markings}
\usetikzlibrary{arrows}
\usetikzlibrary{decorations.markings, decorations.shapes}
\usetikzlibrary{positioning}
\usetikzlibrary{intersections}
\usetikzlibrary{decorations.pathmorphing}
\usetikzlibrary{arrows.meta,bending}

\usepackage{thmtools}

\DeclareRobustCommand{\SkipTocEntry}[9]{}
\DeclareMathOperator*\geht\longrightarrow

\DeclareMathAlphabet\mathbfcal{OMS}{cmsy}{b}{n}
\usepackage{adjustbox}

\theoremstyle{plain}
\newtheorem{Defi}{Definition}[section]  
   \newtheorem{Prop}[Defi]{Proposition}
\newtheorem{Lemma}[Defi]{Lemma}    \newtheorem{Cor}[Defi]{Corollary}
\newtheorem{Thm}[Defi]{Theorem}


\theoremstyle{remark}
\newtheorem{Ex}[Defi]{Example}

\newtheorem{Rem}[Defi]{Remark}


\newcommand{\CC}{\mathbb{C}}

\newcommand{\NN}{\mathbb{N}}
 
\newcommand{\PP}{\mathbb{P}}

\newcommand{\ZZ}{\mathbb{Z}}
\def\Z{\ZZ} 



\newcommand{\cMM}{{\mathcal M}}

  \def\M{\cMM}


\renewcommand\P{{\bf P}}


\newcommand{\Pic}{{\rm Pic}}




\def\={\;=\;}  \def\+{\,+\,}


\DeclareMathAlphabet{\eucal}{U}{eus}{m}{n}
\DeclareMathAlphabet{\newcal}{U}{dutchcal}{m}{n}

\def\Z{\ZZ}








\def\be{\begin{equation}}   \def\ee{\end{equation}}     \def\bes{\begin{equation*}}    \def\ees{\end{equation*}}
\def\ba{\be\begin{aligned}} \def\ea{\end{aligned}\ee}   \def\bas{\bes\begin{aligned}}  \def\eas{\end{aligned}\ees}


\newcommand{\moduli}[1][g]{{\mathcal M}_{#1}}

\def\M{\cMM}





\newcounter{savedtocdepth}
\newcommand*{\SaveTocDepth}[1]{%
  \addtocontents{toc}{%
    \protect\setcounter{savedtocdepth}{\protect\value{tocdepth}}%
    \protect\setcounter{tocdepth}{#1}%
  }%
}

\DeclareDocumentCommand{\LMS}{ O{\mu} O{g,n}} {\Xi\overline{\mathcal{M}}_{#2}(#1)}
\DeclareDocumentCommand{\kLMS}{ O{\mu} O{g,n}} {\Xi^k\!\overline{\mathcal{M}}_{#2}(#1)}



\newcommand{\hslashslash}{%
\lapbox[\width]{-0.15em}{\raisebox{.05ex}{%
    \scalebox{.7}{%
      \rotatebox[origin=c]{22}{$-$}%
    }%
  }%
}}

\newcommand{\bslash}{%
  {%
   \vphantom{d}%
   \ooalign{\kern.05em\smash{\hslashslash}\hidewidth\cr$b$\cr}%
   \kern.05em
  }%
}

\usepackage[style=alphabetic,
            isbn=false,
            doi=false,
            url=false,
            maxnames=5
           ]{biblatex}
\defbibheading{bibliography}[\bibname]{%
  \section*{References}%
}

\bibliography{literatur}

\usepackage{breqn}
\allowdisplaybreaks[2]

\title[On fibrations approaching the Arakelov equality]
      {On fibrations approaching the Arakelov equality}

\begin{document}
\author{Maximilian Bieri}
\email{mbieri@math.uni-frankfurt.de}

\thanks{Research is partially supported  
  by the LOEWE-Schwerpunkt
``Uniformisierte Strukturen in Arithmetik und Geometrie'' and by the Friedrich-Ebert-Stiftung.}
\address{
Institut f\"ur Mathematik, Goethe--Universit\"at Frankfurt,
Robert-Mayer-Str. 6--8,
60325 Frankfurt am Main, Germany
}

\begin{abstract}
The sum of Lyapunov exponents~$L_f$ of a semi-stable fibration is the ratio of the degree of the Hodge bundle by the Euler characteristic of the base. This ratio is bounded from above by the Arakelov inequality. Sheng-Li Tan showed that for fiber genus~$g\geq 2$ the Arakelov equality is never attained. We investigate whether there are sequences of fibrations approaching asymptotically the Arakelov bound. The answer turns out to be no, if the fibration is smooth, or non-hyperelliptic, or has a small base genus.
\par
Moreover, we construct examples of semi-stable fibrations showing that Teichmüller curves are not attaining the maximal possible value of~$L_f$.
\end{abstract}
\maketitle
\tableofcontents
\SaveTocDepth{1}

\section{Introduction}
\label{sec:intro}

The closure of a curve in~$\moduli$ can equivalently be regarded as a {\em semi-stable fibration~$f:X\rightarrow C$ of genus~$g$}. These fibrations come with an additional object, the relative dualizing sheaf~$\omega_{X/C}$. The sheaf defines two invariants, the self-intersection number and the degree of its push-forward. A well known ratio among these numbers is the {\em slope~$\lambda_f$} defined as the quotient of these two invariants, i.e.\ as
$$\lambda_f:=\frac{\omega_{X/C}^2}{\deg f_*\omega_{X/C}}.$$
Its definition is motivated by trying to understand effective divisors on~$\mathcal{M}_{g}$, which goes back to Cornalba and Harris~\cite{cornalbaharris}. Since curves are dual to divisors, the slope of nef curves in~$\mathcal{M}_g$ can be used to estimate the slope of effective divisors. This problem for divisors, known as slope conjecture, is still open and understanding the slopes of fibrations is an important problem in surface geometry. The slope of fibrations is bounded by the \emph{slope inequality}
\begin{equation}
\label{eqslopeineguality}
\frac{4(g-1)}{g}\leq\lambda_f\leq 12,
\end{equation}
where the upper bound is attained precisely by smooth fibrations and the lower bound can only be attained by hyperelliptic fibrations with special properties, to be discussed in Section~\ref{sechypfib}.
\par
In the present paper we propose to study a new invariant for semi-stable fibrations, namely the {\em sum of non-negative Lyapunov exponents~$L_f$}. Lyapunov exponents originate from dynamical systems and have been brought to the study of the geometry of the moduli space of curves through the connection with billiards and the ${\rm SL}_2(\mathbb{R})$-action on the moduli space of flat surfaces. They measure the growth rate of cohomology classes of a flat bundle when parallel transported along a geodesic flow. Since the growth rate definition of Lyapunov exponents can equivalently be phrased as the speed of the image of geodesics under the period mapping, it is tempting to nickname the invariant~$L_f$ we are interested in as the {\em speed} of the semi-stable fibration - and we will succumb the temptation from now on. While the nature of individual Lyapunov exponents is rather unclear, the sum of the non-negative exponents, or the speed, can be expressed by a ratio of classical invariants, namely as
$$L_f = 2\cdot\frac{\deg f_*\omega_{X/C}}{2g_C-2+s},$$
where~$s$ is the number of singular fibers of~$f$. The {\em Arakelov inequality for semi-stable fibrations}
$$L_f\leq g$$ gives an upper bound of this ratio. Since the main result of~\cite{tanminnumber} shows that the Arakelov equality can never be attained for a fibration with fiber genus~$g \geq 2$ the question arise, whether there are sequences of semi-stable fibrations reaching asymptotically the Arakelov equality.
\par
We are going to partially answer this question by proving numerical bounds that are strengthening the Arakelov inequality for the speed of various classes of semi-stable fibrations. Initially, and not surprisingly, a smooth fibration, a so-called \emph{Kodaira fibration}, is far off from reaching the Arakelov bound.
\begin{restatable}{Prop}{propsurfacebundle}
\label{surfacebundle}
	Let~$f:X\to C$ be a Kodaira fibration of genus~$g$. Then
	$$L_f\leq\frac{g-1}{3}.$$
\end{restatable}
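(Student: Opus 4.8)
The plan is to reduce the inequality to the Bogomolov--Miyaoka--Yau inequality applied to the total space $X$. Since $f$ is a Kodaira fibration, every fiber is smooth; in particular the number of singular fibers is $s=0$, so the expression for the speed collapses to
\[
L_f\;=\;2\cdot\frac{\deg f_*\omega_{X/C}}{2g_C-2}\;=\;\frac{\chi_f}{g_C-1},
\qquad \chi_f:=\deg f_*\omega_{X/C}.
\]
Hence the proposition is equivalent to the estimate $\chi_f\le\tfrac13(g-1)(g_C-1)$, and the whole argument will take place on $X$.

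First I would record the two identities attached to a smooth fibration. By Ehresmann's theorem a proper submersion over a curve is a $C^\infty$-fiber bundle, so the topological Euler number is multiplicative:
\[
c_2(X)=e(X)=(2-2g)(2-2g_C)=4(g-1)(g_C-1).
\]
On the coherent side the Leray spectral sequence (equivalently relative Grothendieck--Riemann--Roch) gives $\chi(\mathcal{O}_X)=\chi_f+(g-1)(g_C-1)$, while Noether's formula reads $12\,\chi(\mathcal{O}_X)=K_X^2+c_2(X)$. Eliminating $\chi(\mathcal{O}_X)$ between these expresses everything in terms of $\chi_f$, $K_X^2$ and the product $(g-1)(g_C-1)$.

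The decisive input is the Bogomolov--Miyaoka--Yau inequality $K_X^2\le 3\,c_2(X)$. It applies because $X$ is a minimal surface of general type: it is minimal since any $(-1)$-curve would be rational and would either lie inside a smooth fiber of genus $g\ge2$ or dominate the base of genus $g_C\ge2$, both impossible; and it is of general type because the Arakelov/Fujita positivity $\chi_f>0$ forces $K_X^2=12\chi_f+8(g-1)(g_C-1)>0$. Substituting $K_X^2\le 3c_2(X)$ into Noether's formula and using $c_2(X)=4(g-1)(g_C-1)$ gives
\[
12\,\chi(\mathcal{O}_X)=K_X^2+c_2(X)\le 4\,c_2(X)=16(g-1)(g_C-1),
\]
so that $\chi(\mathcal{O}_X)\le\tfrac43(g-1)(g_C-1)$. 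Subtracting $(g-1)(g_C-1)$ yields exactly $\chi_f\le\tfrac13(g-1)(g_C-1)$, and dividing by $g_C-1$ proves $L_f\le(g-1)/3$.

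All of these computations are routine; the one conceptual point --- and the step I would present most carefully --- is the recognition that the desired Arakelov-type bound for a smooth fibration is nothing but the Bogomolov--Miyaoka--Yau inequality in disguise, together with the verification that its hypotheses (minimality and general type of $X$) are genuinely met. Alternatively, since the excerpt already records that smooth fibrations attain the slope equality $\lambda_f=12$, one may bypass Noether on $X$ by writing $K_f^2=12\chi_f$ with $K_f^2=K_X^2-8(g-1)(g_C-1)$ and feeding BMY directly into this relation, arriving at the same estimate $12\chi_f\le 4(g-1)(g_C-1)$.
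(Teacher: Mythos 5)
Your argument is correct and coincides with the paper's own proof: both rest on the multiplicativity of the topological Euler characteristic for a smooth fibration, Noether's formula, and the Bogomolov--Miyaoka--Yau inequality $c_1^2(X)\le 3c_2(X)$, differing only in whether one eliminates $\chi(\mathcal{O}_X)$ before or after applying BMY. Your additional verification that $X$ is minimal and of general type is a welcome (and correct) elaboration of a hypothesis the paper asserts without comment.
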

Therefore we focus subsequently on semi-stable fibrations~$f:X\rightarrow C$ which have at least one singular fiber. The next result asserts that a low base genus of~$C$ yields a better bound than the Arakelov inequality for the speed.
\begin{restatable}{Thm}{lowbasegen}\label{thmlowbasegen}
Let $f: X \to C$ be a semi-stable fibration of genus~$g\geq 2$ with~$s$ singular fibers and let~$m\in\mathbb{N}$ be a number such that
$$\frac{s}{2g_C-2+s}\geq\frac{1}{m}.$$
Then the speed of~$f$ is bounded from above by
$$L_f\leq g\cdot\left(1-\frac{1}{18m}\right).$$
\end{restatable}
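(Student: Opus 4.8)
The plan is to remove the parameter $m$ by a soft reduction, recast the claim as a lower bound on the defect in the Arakelov inequality, and then attack that defect through the logarithmic Higgs bundle of $f$. Put $b:=2g_C-2+s$, so that $L_f=2\deg f_*\omega_{X/C}/b$ and the Arakelov inequality reads $\deg f_*\omega_{X/C}\le\frac g2 b$. The hypothesis $s/b\ge 1/m$ gives $\frac1{18m}\le\frac{s}{18b}$, hence $g\bigl(1-\frac1{18m}\bigr)\ge g\bigl(1-\frac{s}{18b}\bigr)$, and it suffices to prove the stronger, $m$-free inequality $L_f\le g\bigl(1-\frac{s}{18b}\bigr)$. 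Writing $D:=gb-2\deg f_*\omega_{X/C}$ for twice the Arakelov defect, this is precisely the statement $D\ge\frac{gs}{18}$: each of the $s$ singular fibres must cost, on average, at least $g/18$ in the Arakelov inequality. This is the only step in which the constant $18$ is produced, and the whole task becomes to manufacture a per-fibre loss of that size.

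\emph{The defect as vanishing of the Higgs field.} Let $\Delta\subset C$ be the reduced divisor of the $s$ points carrying a singular fibre, and let $(E^{1,0}\oplus E^{0,1},\theta)$ be the logarithmic Higgs bundle of the weight-one variation $R^1f_*\mathbb C$ on $C\smallsetminus\Delta$, with $E^{1,0}=f_*\omega_{X/C}$, $E^{0,1}=R^1f_*\mathcal O_X\cong(E^{1,0})^\vee$ and Kodaira--Spencer field $\theta\colon E^{1,0}\to E^{0,1}\otimes\Omega^1_C(\log\Delta)$. Decomposing $E^{1,0}=\mathcal A\oplus\mathcal F$ into its Higgs-ample part $\mathcal A$ (on which $\theta$ is generically nondegenerate) and its flat unitary part $\mathcal F$ (on which $\theta$ vanishes, so $\deg\mathcal F=0$), the restriction $\det(\theta|_{\mathcal A})$ is a nonzero section of a line bundle of degree $a\cdot b-2\deg\mathcal A$, where $a=\operatorname{rank}\mathcal A=g-\operatorname{rank}\mathcal F$. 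Since $\deg E^{1,0}=\deg\mathcal A$, this yields the identity $D=(\operatorname{rank}\mathcal F)\,b+\sum_{q\in C}\operatorname{ord}_q\det(\theta|_{\mathcal A})$, exhibiting the defect as a nonnegative combination of a flat-part term and the vanishing of the Kodaira--Spencer determinant.

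\emph{Producing the per-fibre loss.} I would bound the two terms separately. If $\operatorname{rank}\mathcal F\ge g/18$, the first term alone gives $D\ge\frac{g}{18}b\ge\frac{gs}{18}$ (using $b\ge s$ when $g_C\ge1$, the base $\PP^1$ case needing a small separate lower bound on $s$ in the spirit of Tan). Otherwise $a>\frac{17}{18}g$, and I would localise the second term at the points of $\Delta$: semistability makes the monodromy unipotent, so $\Res_p\theta$ is the nilpotent logarithm $N_p$ and $\operatorname{ord}_p\det(\theta|_{\mathcal A})\ge a-\operatorname{rank}(N_p|_{\mathcal A})$, the residue rank being governed by the first Betti number of the dual graph of the fibre. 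Fibres of compact type and, more generally, all mildly degenerate fibres then contribute close to $a\approx g$ each; what must be controlled are the severely degenerate fibres, for which $N_p$ is nearly invertible. Here the slope inequality $\lambda_f\ge 4(g-1)/g$ enters, together with Noether's relation $\omega_{X/C}^2=12\deg f_*\omega_{X/C}-\delta_f$ (with $\delta_f$ the number of nodes): a large total monodromy rank forces many nodes and hence, through these two relations, constraints on $\deg f_*\omega_{X/C}$ that feed back into the degree bookkeeping for $\mathcal A$.

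I expect the last point to be the genuine obstacle. The residue estimate degenerates exactly at the maximally unipotent fibres, where $\det(\theta|_{\mathcal A})$ need not vanish at all, while the slope inequality and Noether's formula only bound the number of nodes from above. The crux is therefore to balance the Hodge-theoretic vanishing at the mild fibres against the slope and Noether constraints on the severe ones, so that uniformly in $g$ and in the combinatorial type of the fibres the defect $D$ can never drop below $gs/18$; it is this optimisation that should pin down the constant $18$.
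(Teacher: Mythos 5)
Your reduction to the $m$-free inequality $L_f\le g\bigl(1-\tfrac{s}{18(2g_C-2+s)}\bigr)$ is correct and matches the intermediate inequality the paper actually proves, but the proof of that inequality is not there: the step that is supposed to produce a loss of $g/18$ per singular fibre is exactly the step you concede you cannot carry out, and it genuinely fails as stated. In your identity $D=(\operatorname{rank}\mathcal F)\,b+\sum_q\operatorname{ord}_q\det(\theta|_{\mathcal A})$, a singular fibre whose local monodromy logarithm $N_p$ has rank equal to $\operatorname{rank}\mathcal A$ (for instance an irreducible stable fibre all of whose $g$ nodes are non-separating, so that the dual graph has first Betti number $g$) contributes nothing to either term, and nothing prevents all $s$ singular fibres from being of this kind while $\mathcal F=0$. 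Your suggestion that the slope inequality and Noether's formula will ``feed back into the degree bookkeeping'' is not an argument: those relations compare $\delta_f$ and $\chi_f$ with $\omega_f^2$ globally, but they do not interact with the order of vanishing of $\det(\theta|_{\mathcal A})$ at individual points, and you never exhibit a mechanism converting ``many nodes'' into vanishing of the Kodaira--Spencer determinant. Consequently the constant $18$ is never derived; it appears only as a target. (There is also a smaller slip in the first branch: for $g_C=0$ one has $b=s-2<s$, so $\operatorname{rank}\mathcal F\ge g/18$ does not give $D\ge gs/18$; this is repairable by adjusting the threshold, but the second branch is the real problem.)

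For comparison, the paper obtains the per-fibre loss from surface geometry rather than Hodge theory. A base change of ramification index $n$ totally ramified over the critical locus makes the canonical divisor of the pulled-back surface nef, and Miyaoka's inequality $c_1^2\le 3c_2$ for surfaces with rational double points descends to $\omega_f^2\le(2g-2)(2g_C-2+s)+\tfrac{3r_f}{n^2}-\tfrac{(2g-2)s}{n}$, where $r_f=\sum_q 1/(m_q+1)$ is controlled by $r_f\le(3g-3)s$ because a stable fibre has at most $3g-3$ nodes. Combining this with the slope inequality $4(g-1)\chi_f\le g\,\omega_f^2$ and maximizing $(2n-9)/n^2$ at $n=9$ produces the factor $1/18$. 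That route penalizes every singular fibre through the orbifold Euler characteristic of the base, uniformly in the combinatorial type of the fibre --- precisely the case your residue estimate cannot see. To salvage your approach you would need an additional geometric input of this kind; the Higgs-bundle decomposition alone is blind to totally degenerate fibres.
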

For the sake of illustration we exhibit some values up to three digits of upper bounds given by Theorem~\ref{thmlowbasegen} for small genera in Table~\ref{Tabelle1}.
\begin{table}[htbp]
\centering
\begin{tabular}{lccccccc}
\toprule  & $g=2$ & $g=3$ & $g=4$ & $g=5$ & $g=6$ & $g=7$ & $g=8$ \\
\midrule $g_C\leq 1$ & $1.889$ & $2.833$ & $3.778$ & $4.722$ & $5.667$ & $6.611$ & $7.556$\\
 $g_C=2$ & $1.944$ & $2.917$ & $3.889$ & $4.861$ & $5.833$ & $6.806$ & $7.778$\\
\bottomrule
\end{tabular}
\caption{\label{Tabelle1}Upper bound of the speed for small base genus.}
\end{table}

Slope and speed of semi-stable fibrations are related. In fact the combination of the strict canonical class inequality
$$\omega_{X/C}^2<(2g-2)\cdot(2g_C-2+s)$$
and the slope inequality~\eqref{eqslopeineguality} show that high slope implies low speed. Equality in the lower slope bound can only be attained by hyperelliptic fibrations with special properties, as we have mentioned above. There are various stricter, lower bounds of the slope known for various classes of fibrations. In particular we will use lower bounds for non-hyperelliptic fibrations of genus~$g=3,4,5$ by~\cite{konno}. For~$g\geq 6$, we improve a result of~\cite{gonality} to get lower bounds for the slope of non-hyperelliptic fibrations of genus~$g$, which implies strict upper bounds for the speed.
\begin{restatable}{Thm}{speednonhyp}\label{thmspeednonhyp}
Let~$f:X\to C$ be a semi-stable, non-hyperelliptic fibration of genus~$g$. Then we have
\begin{itemize}
\item[i)] $L_f<\frac{8}{3}$ for~$g=3$,
\item[ii)] $L_f<\frac{7}{2}$ for~$g=4$,
\item[iii)] $L_f< 4$ for~$g=5$,
\item[iv)] $L_f<\frac{8g+4}{9}$ for~$6\leq g\leq 12$,
\item[v)] $L_f< g-1$ for~$g\geq 13$.
\end{itemize}
\end{restatable}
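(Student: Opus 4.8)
The plan is to convert every one of the five claims into a lower bound on the slope $\lambda_f$ by means of a single elementary inequality, and then to feed in those slope bounds---quoting Konno in the three small genera and improving the gonality estimate of \cite{gonality} in the two large-genus ranges.

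The first step is the reduction. From the definition of the slope one has $\deg f_*\omega_{X/C}=\omega_{X/C}^2/\lambda_f$, so substituting into the formula for the speed gives
$$L_f=\frac{2\,\omega_{X/C}^2}{\lambda_f\,(2g_C-2+s)}.$$
Plugging the strict canonical class inequality $\omega_{X/C}^2<(2g-2)(2g_C-2+s)$ into the numerator cancels the factor $2g_C-2+s$ and yields the key estimate
$$L_f<\frac{4(g-1)}{\lambda_f},$$
which is the quantitative form of the principle, already announced in the introduction, that high slope forces low speed. Each part of the theorem now reduces to inserting a suitable lower bound for $\lambda_f$.

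For $g=3,4,5$ I would simply invoke the lower bounds of \cite{konno} for non-hyperelliptic fibrations, which in these genera read $\lambda_f\geq 3$, $\lambda_f\geq 24/7$ and $\lambda_f\geq 4$; substituting into the key estimate produces exactly $L_f<8/3$, $L_f<7/2$ and $L_f<4$, so these three cases are immediate. The remaining two ranges require slope bounds that form the genuine new input: $\lambda_f\geq 9(g-1)/(2g+1)$ for $6\leq g\leq 12$, which returns $L_f<(8g+4)/9$, and $\lambda_f\geq 4$ for $g\geq 13$, which returns $L_f<g-1$. The two target slopes agree at $g=13$, where $9(g-1)/(2g+1)=4$; this is the threshold past which the sharper estimate ceases to be available and one must fall back on the cruder bound, which explains the change of regime in the statement.

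To establish these slope bounds I would run Xiao's method on the Hodge bundle: take the Harder--Narasimhan filtration of $f_*\omega_{X/C}$, bound $\omega_{X/C}^2$ from below in terms of the slopes of the graded pieces, and exploit that the general fiber is non-hyperelliptic---hence of Clifford index at least $1$---to control the relative Clifford index and gonality contribution entering the estimate of \cite{gonality}. The main obstacle is precisely this refinement: pushing the generic non-hyperelliptic hypothesis far enough to upgrade the bound of \cite{gonality} to $9(g-1)/(2g+1)$ throughout $6\leq g\leq 12$, and pinning down why the same analysis degrades to only $\lambda_f\geq 4$ once $g\geq 13$. Once these slope inequalities are in hand, the theorem follows by the single substitution above.
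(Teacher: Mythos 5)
Your proposal is correct and follows essentially the same route as the paper: the paper's proof consists precisely of combining the strict canonical class inequality (its Remark on $\omega_f^2<(2g-2)(2g_C-2+s)$, giving $L_f<4(g-1)/\lambda_f$) with the Konno bounds \eqref{lowboundsnonhyp} for $g=3,4,5$ and with Corollary~\ref{corlowboundnonhyp} (the improved gonality-type bound $\lambda_f\geq 9(g-1)/(2g+1)$ for $g\leq 12$ and $\lambda_f\geq 4$ for $g>12$, established earlier via Xiao's Harder--Narasimhan method and the double-cover dichotomy) for the remaining ranges. The only cosmetic difference is that the paper isolates the slope bounds as a separately proved corollary rather than deriving them inside this proof, and the $g\geq 13$ regime change comes from the double-cover case capping the bound at $4$ rather than the sharper estimate becoming unavailable.
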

Again we illustrate the approximate values to sketch a rough overview in Table~\ref{Tabelle2}.
\begin{table}[htbp]
\centering
\begin{tabular}{ccccccccc}
\toprule   $g=3$ & $g=4$ & $g=5$ & $g=6$ & $g=7$ & $g=8$ & $g=9$ & $g=10$ & $g=11$  \\
\midrule  $2.667$ & $3.5$ & $4$ & $5.778$ & $6.667$ & $7.556$ & $8.444$ & $9.333$ & $10.222$\\
\bottomrule
\end{tabular}
\caption{\label{Tabelle2}Upper bound of speed for non-hyperelliptic fibrations.}
\end{table}
By combining Proposition~\ref{surfacebundle}, Theorem~\ref{thmlowbasegen} and Theorem~\ref{thmspeednonhyp} we see that a candidate sequence reaching asymptotically the Arakelov equality must contain hyperelliptic, non-smooth fibrations, whose base genus~$g_C$ are asymptotically approaching infinity. Therefore the focus will be on the construction of hyperelliptic fibrations over smooth curves~$C$ with arbitrary base genus. Prior to this paper, the curves with the highest speed known were Teich\-mül\-ler curves, with a speed of $L_f = (g+1)/2$ as stated in~\cite{nonvaryingsums}. We will give examples of hyperelliptic fibrations showing that Teichmüller curves are not attaining the maximal possible speed. 
\begin{restatable}{Thm}{thmexistenz}\label{maintheorem1}
	Let~$g\geq 2$ be a number. Then there are semi-stable, hyperelliptic fibrations~$f:X\rightarrow C$ of genus~$g$ with a speed of
	$$L_f>\frac{g+1}{2}.$$
	In particular there are semi-stable, hyperelliptic fibrations~$f$ with
	\begin{itemize}
	\item[i)] $L_f=\frac{8}{5}$ for~$g=2$,
	\item[ii)] $L_f=\frac{8}{3}$ for~$g=3$,
	\item[iii)] $L_f=g-\lfloor\frac{g+1}{4}\rfloor$ for~$g\geq 4$ odd,
	\item[iv)] $L_f=g-\frac{g^2-2g}{2g+2}$ for~$g\geq 4$ even,
	\item[v)] $L_f=g-\frac{g}{4}$ for~$g\equiv 0\mod 4$.
	\end{itemize}
\end{restatable}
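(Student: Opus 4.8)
The statement is an existence result, so the plan is to construct the fibrations explicitly as double covers of ruled surfaces and to read off their speed directly. I would realize each $X$ as a double cover $\sigma\colon X\to P$ of a ruled surface $P=\mathbb{P}(\mathcal E)\xrightarrow{p}C$, branched along a divisor $R\in|2L|$ meeting each ruling in $2g+2$ points, so that $f=p\circ\sigma\colon X\to C$ is hyperelliptic of genus $g$; concretely this is the family $y^2=\prod_i(x-a_i)$ with the $a_i$ the branch sections. Since $L_f=2\deg f_*\omega_{X/C}/(2g_C-2+s)$ is unchanged under an \'etale base change $C'\to C$ — numerator and $2g_{C'}-2+s'$ both scale by the degree — it suffices to build one fibration per $g$ attaining the asserted value, and \'etale covers of a positive-genus base then realize arbitrarily large $g_C$ with the same speed, as wanted for the asymptotic question of the introduction. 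Note also that the five explicit values already imply the main inequality $L_f>(g+1)/2$ for every $g\ge 2$, so it is enough to produce them.

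A preliminary computation fixes the shape of the construction. Taking $P=C\times\mathbb{P}^1$ and a smooth branch curve $R\in|2L|$ with $L=(g+1)C_0+p^*\mathfrak d$, the standard double-cover identity $\chi(\mathcal O_X)=2\chi(\mathcal O_P)+\tfrac12 L(L+K_P)$ together with adjunction for $R$ yields $\deg f_*\omega_{X/C}=g\deg\mathfrak d$ and $s=4(2g+1)\deg\mathfrak d$, so that $L_f<\tfrac14$ in all cases. Thus a smooth, merely fiber-tangent, branch locus is hopeless: each singular fiber carries a single node, so $s$ is of the same order as the total number of nodes. The gain must come from concentrating the degeneration: I would take $R$ to be a union of branch sections whose mutual transverse intersections all lie over a small number of points of $C$, so that each special fiber is a semistable hyperelliptic curve carrying many nodes at once yet is counted only once in $s$.

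For such a nodal branch locus the invariants are computed from the singular double cover with the usual correction terms at the nodes of $R$, while $s$ is the number of fibers supporting a crossing. Maximizing $L_f$ becomes an integer optimization: distribute the $\sum_{i<j}\Gamma_i\cdot\Gamma_j$ crossings among as few fibers as possible, subject to the constraint that a semistable hyperelliptic fiber can absorb only boundedly many transverse nodes — an irreducible one at most $g$, since the double cover of $\mathbb{P}^1$ branched at $2g+2-2k$ points with $k$ coincidences has geometric genus $g-k$. Because grouping the $2g+2$ branch points into crossing pairs plus a remainder is governed by parity and divisibility-by-$4$ constraints, the optimal configuration, and hence the best value of $L_f$, depends on the residue of $g$; this is exactly the source of the separate cases for odd $g$, even $g$ and $g\equiv 0\bmod 4$, and of the two exceptional low-genus values $8/5$ and $8/3$. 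The inequality $L_f>(g+1)/2$ is then immediate case by case, e.g. $g-\lfloor(g+1)/4\rfloor\ge(3g-1)/4>(g+1)/2$ for $g\ge 4$ in (iii), and $g-\tfrac{g(g-2)}{2(g+1)}>\tfrac{g+1}{2}$ in (iv).

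The main obstacle is semistability. The concentrated branch configuration must produce fibers that are genuinely nodal — transverse crossings at distinct points of the ruling, with no higher tangencies, cusps, or triple points — so that after the canonical resolution of the double cover the fibration is semistable and relatively minimal of the correct genus $g$. Here hyperellipticity cuts both ways: the explicit equation $y^2=\prod_i(x-a_i)$ makes the local analysis of each degenerate fiber tractable, but the same structure caps the number of nodes a fiber can hold, and it is this cap that forces the exact constants in place of the Arakelov value $g$. Verifying relative minimality, the correct fiber genus, and that the listed fractions are actually realized by an effective and suitably generic branch divisor on a base curve $C$ is the remaining technical burden; the slope-speed relation recalled in the introduction provides a consistency check, since these few-singular-fiber configurations are precisely the low-slope hyperelliptic fibrations.
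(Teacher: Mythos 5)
Your framework is the right one (hyperelliptic fibrations as double covers of ruled surfaces, invariants read off from the branch divisor, and the observation that the five listed values already imply $L_f>(g+1)/2$), and your qualitative principle --- concentrate the degeneration of the branch locus over as few points of $C$ as possible so that $s$ stays small --- is indeed what drives the paper. But the actual construction is missing, and the mechanism you propose in its place would not produce the stated values. You take $R$ to be a union of $2g+2$ honest sections whose pairwise \emph{transverse} crossings are packed into few fibers. First, this packing is an unexamined existence claim, and it is geometrically very restrictive: simultaneous transverse crossings of several disjoint pairs of sections over a common base point impose many independent conditions, and already for the simplest non-trivial configuration on $\P^1\times\P^1$ of bidegree $(2g+2,2)$ (two moving sections plus $2g$ constant ones) the crossings are forced to spread over at least $2g$ distinct fibers, so no concentration occurs. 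Second, and more structurally: if all singularities of $R$ are ordinary nodes (type $A_1$), they are \emph{negligible}, so Lemma~\ref{lemminv} gives $\chi_f=\tfrac12 gn$ with no correction term, and the number of nodes a semi-stable hyperelliptic fiber can absorb is bounded by $g+1$; an elementary count of $\sum_{i<j}\Gamma_i\cdot\Gamma_j=(2g+1)n$ crossings then caps $L_f$ at roughly $g(g+1)/(2g-1)$ even in the idealized best case, far below $g-\lfloor(g+1)/4\rfloor$ for large $g$. In particular the floor-function corrections in (iii) and (v) cannot arise from your configuration at all, since they come precisely from the correction term $\tfrac12\sum k_i(k_i-1)$ attached to \emph{non-negligible} singularities.

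What the paper actually does is take $R$ to be the correspondence $\{(y,z):a(y)=b(z)\}\subseteq\P^1\times C$, where $a\colon\P^1\to\P^1$ is an explicit degree-$(2g+2)$ cover with three branch points and $b\colon C\to\P^1$ is a cover realized from a compatible branch datum via Proposition~\ref{existencebranchcover}. Then $R\to C$ is a multisection (not a union of sections), the singular fibers lie only over $\Delta=b^{-1}(\{0,1,\infty\})$, which is tiny (e.g.\ $s=4$ with $g_C=1$ in the odd case), and the singularities of $R$ over $\Delta$ are explicit: negligible tangencies $y^2=z^4$, $y^2=z^2$, plus the essential non-negligible points $y^{g+1}=z^4$ (resp.\ $y^{g+1}=z^{g+1}$, $y^{g+1}=z^6$), whose iterated even resolution is counted ($s_4(f)=2\lfloor(g+1)/4\rfloor$, etc.) and fed into Lemma~\ref{lemminv} to get $\chi_f$ and hence the exact values of $L_f$; semi-stability is checked via Proposition~\ref{propsemistableAM}. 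So the case split by the residue of $g$ comes from the arithmetic of resolving $y^{g+1}=z^m$, not from a pairing combinatorics of branch points. To repair your argument you would need to abandon the transverse-sections model, allow high-order and non-negligible branch singularities, and exhibit the covers $a$ and $b$ (or some equivalent explicit data) realizing them.
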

Again we exhibit some values for small fiber genus in Table~\ref{Tabelle3}.
\begin{table}[htbp]
\centering
\begin{tabular}{cccccccc}
\toprule    $g=2$ & $g=3$ & $g=4$ & $g=5$ & $g=6$ & $g=7$ & $g=8$ & $g=9$  \\
\midrule  $1.6$ & $2.667$ & $3.2$ & $4$ & $4.286$ & $5$ & $6$ & $7$  \\
\bottomrule
\end{tabular}
\caption{\label{Tabelle3}High speed examples of semi-stable fibrations.}
\end{table}

\subsection*{Acknowledgements}

I am very grateful to Martin Möller, the advisor of my PhD thesis, for his constant support and many inspiring discussions.
 

\section{Sum of Lyapunov exponents}
\label{sec:lyapunov}

This section contains the motivation of the nickname 'speed' as well as some older results on the speed of semi-stable fibrations coming from Teichmüller curves.
\par
Lyapunov exponents of a semi-stable fibration measure the asymptotic growth rate of cohomology classes under parallel transport along the geodesic flow~$g_t$ of the base. More precisely, given a semi-stable fibration of genus~$g\geq 2$, we denote by~$\Delta$ the boundary points of~$C$. Then we start from the observation that the base~$C -\Delta$ of~$f$ is hyperbolic, i.e.\ uniformized by the upper half plane~$\mathbb{H}$. Consequently, we can look at the geodesic flow $g_t$ on $\mathbb{H}$, or rather on the unit tangent bundle~$T^{1} \mathbb{H}$ and its image on $T^1C$. We consider the bundle $V$ on $T^1C$ which has as fibers over~$c\in C$ the cohomology $H^1(F_c,\mathbb{R})$. The Hodge norm makes this bundle a normed vector bundle, but the norm is anything but constant under parallel transport by~$g_t$. Rather, in this situation an ergodicity hypothesis is satisfied and consequently, Oseledets theorem~\cite{oseledets} provides a filtration~$V=V_{1}\supsetneq\ldots\supsetneq V_{k}\supset 0$ and real numbers~$\widetilde{\lambda}_{1},\ldots,\widetilde{\lambda}_{k}$, such that for almost all~$c\in T^1C$ and~$v\in (V_{i})_{c}\setminus\{0\}$, one has
\begin{equation*}
{\lVert}g_{t}(v){\rVert}=\exp(\widetilde{\lambda}_{i}t+o(t)).
\end{equation*} 
The {\em Lyapunov exponents} 
$$ \lambda_1 \geq \cdots \geq \lambda_g \geq 0 \geq \lambda_{g+1}\geq \cdots \geq \lambda_{2g}$$
are the sequence of the normalization of the exponents $\widetilde{\lambda}_{i}$ repeated with multiplicity equal to the rank of~$V_{i}/V_{i+1}$ in order to get~$2g$ exponents. The symplectic form on the bundle $V$ implies that the value of the Lyapunov exponents are symmetric with respect to the origin, $\lambda_{2g+1-i} = -\lambda_i$ and by the normalization of the Lyapunov exponents we always have the bound $|\lambda_i| \leq 1$. For more information we refer to~\cite[Section~9]{bouwmoeller}.
\par
Little is known about individual Lyapunov exponents. The sum $L_f:=\sum_{i=1}^g \lambda_i$ of non-negative Lyapunov exponents is related to a ratio of the classical relative invariants of fibrations. Kontsevich~\cite{sumlyapunov} showed that 
$$ 2\cdot\frac{\deg f_*\omega_{X/C}}{|\chi(C)|} \,=\, \sum_{i=1}^g \lambda_i=L_f,$$
where~$f_*\omega_{X/C}$ is the Hodge bundle of the semi-stable fibration~$f$.
\par
We take advantage of an alternative intuitive explanation for the quantity measured by Lyapunov exponents. Associated with any fibration~$f: X \rightarrow C$ there is the period map~$p:\mathbb{H} \to \mathbb{H}_g$ from the universal cover of the curve~$C$ to the Siegel upper half space. Lyapunov exponents measure the speed the image curves under~$p$ travel when traveling with unit speed along the geodesic flow in~$\mathbb{H}$, in an average over all geodesics in~$\mathbb{H}$. This justifies to call~$L_f$ the {\em speed} of the fibration instead of using the bulky expression 'sum of all non-negative Lyapunov exponents'.

\subsection{Results for Teich\-mül\-ler curves}
\label{speedTeichm}
The invariant 'sum of non-negative Lyapunov exponents' was originally introduced to Teich\-mül\-ler curves (see~\cite{zorichlyapunovexponents}) and computed. For Teich\-mül\-ler curves constructed from cyclic covers, the speed has been computed in~\cite{bouwmoeller} and~\cite{weierstrassfiltration} contains some conjectural estimates for further examples.
\par If~$f$ is a fibration coming from a Teich\-mül\-ler curve generated by a flat surface~$(X,\omega)$, then the slope~$\lambda_f$ and the speed~$L_f$ are related by
$$ \lambda_f \,=\frac{12\kappa_f}{L_f},$$
where $\kappa_f$ is a constant that only depends on the order of zeros of~$\omega$ (see~{\cite[Proposition 4.5]{nonvaryingsums}}). For Teich\-mül\-ler curves a lot is known about~$\lambda_f$ and hence equivalently about~$L_f$.
\par
For each fixed type of zeros of the flat surface there is a formula for the limit of~$L_f$ as $\chi(C)$ grows, the Lyapunov exponents for strata (see~\cite{sumlyapunov}). For~$g=2$ and for several other types of zeros of the flat surface, the invariant~$L_f$ is the same for all Teich\-mül\-ler curves living in the same stratum. This was established in~\cite{nonvaryingsums}, where this phenomenon was called \emph{non-varying}.
\begin{Thm}[{\cite[Corollary 3.5]{nonvaryingsums}}]
\label{theoremnonvarying}
Let~$f:X\rightarrow C$ be a hyperelliptic fibration of genus~$g$ coming from a Teich\-mül\-ler curve generated by a flat surface~$(X,\omega)$. Then we have
$$L_f=\frac{g^2}{2g-1} \text{ and } \lambda_f=\frac{4(g-1)}{g} \text{ if } (X,\omega)\in\Omega\mathcal{M}_g^{hyp}(2g-2),$$
$$L_f=\frac{g+1}{2} \text{ and } \lambda_f=\frac{4(g-1)}{g} \text{ if } (X,\omega)\in\Omega\mathcal{M}_g^{hyp}(g-1,g-1).$$
\end{Thm}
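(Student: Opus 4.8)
The plan is to exploit the relation $\lambda_f = 12\kappa_f/L_f$ from \cite[Proposition 4.5]{nonvaryingsums}, rewritten as $L_f = 12\kappa_f/\lambda_f$. Since $\kappa_f$ depends only on the orders of the zeros of $\omega$, it is constant along each of the two hyperelliptic loci and can be computed once and for all. The whole problem therefore splits into two tasks: evaluating the combinatorial constant $\kappa_f$ for the signatures $(2g-2)$ and $(g-1,g-1)$, and pinning down the slope $\lambda_f$ of the resulting fibration. That both cases produce the \emph{same} value $\lambda_f = 4(g-1)/g$ already signals where the content lies: these hyperelliptic Teich\-mül\-ler curves should realize the lower end of the slope inequality \eqref{eqslopeineguality}.

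For the first task I would use the explicit shape of $\kappa_f$ in terms of the zero orders $m_i$, namely $\kappa_f = \tfrac{1}{12}\sum_i \frac{m_i(m_i+2)}{m_i+1}$. For the single zero of order $2g-2$ this gives $\kappa_f = \frac{g(g-1)}{3(2g-1)}$, and for the two zeros of order $g-1$ it gives $\kappa_f = \frac{g^2-1}{6g}$. These are routine substitutions and carry no real difficulty.

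The main obstacle is the second task: showing that $\lambda_f = 4(g-1)/g$ for \emph{every} Teich\-mül\-ler curve in these two loci. The fibers are hyperelliptic, so \eqref{eqslopeineguality} already yields $\lambda_f \geq 4(g-1)/g$; the work is the reverse inequality, i.e.\ that the minimal slope is actually attained. I would argue this by analysing the singular fibers, which correspond to the cusps of the Teich\-mül\-ler curve. Using the flat structure — the fact that the zero of order $2g-2$ is a Weierstrass point, respectively that the two zeros of order $g-1$ are either exchanged or both fixed by the hyperelliptic involution — one shows that the degenerations occurring at the cusps are precisely of the type that does not raise the slope above its minimum, in the sense of the node classification for hyperelliptic fibrations recalled in Section~\ref{sechypfib}. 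Equivalently, one computes $\omega_{X/C}^2$ and $\deg f_*\omega_{X/C}$ directly on the compactified Teich\-mül\-ler curve and verifies the extremal identity $\omega_{X/C}^2 = \tfrac{4(g-1)}{g}\deg f_*\omega_{X/C}$.

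Granting $\lambda_f = 4(g-1)/g$, the conclusion is immediate: substituting into $L_f = 12\kappa_f/\lambda_f$ gives $L_f = \frac{g^2}{2g-1}$ in the $(2g-2)$ case and $L_f = \frac{g+1}{2}$ in the $(g-1,g-1)$ case, matching the asserted values. I expect the crux to be the rigidity of the flat structure, which forces the minimal slope to be attained uniformly over the entire stratum (this uniformity is exactly the non-varying phenomenon); by contrast, the arithmetic of the two substitutions is purely formal.
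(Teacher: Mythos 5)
First, a point of order: the paper does not prove this statement at all --- it is imported verbatim as \cite[Corollary 3.5]{nonvaryingsums}, so there is no internal proof to compare against. Your proposal must therefore be measured against the argument in that reference, and its skeleton does match: the identity $\lambda_f L_f = 12\kappa_f$ (quoted in Section~\ref{speedTeichm} of this paper as \cite[Proposition 4.5]{nonvaryingsums}), the evaluation of $\kappa_f=\tfrac{1}{12}\sum_i m_i(m_i+2)/(m_i+1)$, and the determination of the slope. Your arithmetic is correct: $\kappa_f=\tfrac{g(g-1)}{3(2g-1)}$ for the signature $(2g-2)$ and $\kappa_f=\tfrac{g^2-1}{6g}$ for $(g-1,g-1)$, and substituting $\lambda_f=4(g-1)/g$ does return $g^2/(2g-1)$ and $(g+1)/2$ respectively.

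The gap is the step you yourself flag as the crux: that $\lambda_f=4(g-1)/g$ holds for \emph{every} Teichm\"uller curve in these two strata. The slope inequality gives only $\lambda_f\geq 4(g-1)/g$, and your argument for the reverse inequality --- ``the degenerations occurring at the cusps are precisely of the type that does not raise the slope above its minimum'' --- is an assertion of the conclusion, not a mechanism. To make it a proof one needs, for instance, the Cornalba--Harris expression of the Hodge class on the compactified hyperelliptic locus as a combination of the boundary classes $\xi_0,\xi_1,\dots$, together with a verification of which of these classes the Teichm\"uller curve actually meets (equivalently, a computation of $\omega_{X/C}^2$ from the admissible-cover description of the stable fibers at each cusp). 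The flat-geometric input you mention --- the zero of order $2g-2$ being a Weierstrass point, the two zeros of order $g-1$ being exchanged by the involution, and the fact that vanishing cycles of an abelian differential are non-separating --- is indeed the raw material for that verification, but without carrying it out you have not excluded the boundary components whose presence would push the slope strictly above the minimum. Since this uniformity over the whole stratum is exactly the non-varying phenomenon the theorem asserts, the proposal as written reduces the statement to its own hardest part rather than proving it.
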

The speed~$(g+1)/2$ for semi-stable fibrations is in fact the highest possible for any Teich\-mül\-ler curve by a result of~\cite{UpperboundsTeichm}.
\begin{Thm}[{\cite[Theorem 1.2]{UpperboundsTeichm}}]
\label{upperboundteich}
Let~$f:X\rightarrow C$ be a fibration of genus~$g$ coming from a Teich\-mül\-ler curve generated by a flat surface~$(X,\omega)$. Then we have
$$L_f\leq\frac{g+1}{2}.$$
Furthermore we have equality if and only if~$(X,\omega)\in\Omega\mathcal{M}_g^{hyp}(g-1,g-1)$ or $(X,\omega)\in\Omega\mathcal{M}_g^{hyp}(1^{2g-2})$.
\end{Thm}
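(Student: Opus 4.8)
The plan is to translate the statement into a bound on the degree of the Hodge bundle and then exploit the special Hodge-theoretic structure that a Teichmüller curve imposes on its weight-one variation of Hodge structure. By Kontsevich's formula recalled above, $L_f = 2\deg f_*\omega_{X/C}/|\chi(C)|$ with $|\chi(C)| = 2g_C-2+s$, so the inequality $L_f \leq (g+1)/2$ is equivalent to $\deg f_*\omega_{X/C} \leq \tfrac{g+1}{4}\,|\chi(C)|$. I would first observe that the slope inequality alone cannot suffice: combining $\lambda_f = 12\kappa_f/L_f$ with the lower bound $\lambda_f \geq 4(g-1)/g$ from~\eqref{eqslopeineguality} yields $L_f \leq \tfrac{3g}{g-1}\kappa_f$, which is sharp $(=\tfrac{g+1}{2})$ for the stratum $\Omega\mathcal{M}_g^{hyp}(g-1,g-1)$ but only gives $\tfrac{3g}{4}$ for $\Omega\mathcal{M}_g^{hyp}(1^{2g-2})$ when $g>2$. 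Hence a genuinely Hodge-theoretic argument is needed. I would work on a smooth compactification $\overline C$ with boundary $\Delta$, pass to the logarithmic Deligne extension of $R^1f_*\mathbb{C}$ with Hodge bundle $E=E^{1,0}\oplus E^{0,1}$, $E^{1,0}=f_*\omega_{X/C}$, and logarithmic Higgs field $\theta\colon E^{1,0}\to E^{0,1}\otimes\Omega^1_{\overline C}(\log\Delta)$, noting $\deg\Omega^1_{\overline C}(\log\Delta)=|\chi(C)|$.

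The next step is to isolate the contribution of $\lambda_1=1$. Since the local system underlies a Teichmüller curve it contains the rank-two uniformizing sub-VHS $\mathbb{L}$ arising from the $\mathrm{SL}_2(\mathbb{R})$-action, whose $(1,0)$-part $L\subset E^{1,0}$ is a line bundle of maximal Arakelov degree $\deg L=\tfrac12|\chi(C)|$; this reproduces $\lambda_1=1$. By Deligne semisimplicity the VHS splits as $\mathbb{V}=\mathbb{L}\oplus\mathbb{M}$ with $\mathbb{M}$ of rank $2g-2$, and correspondingly $E^{1,0}=L\oplus M$ with $\mathrm{rk}\,M=g-1$, so that $L_f=1+2\deg M/|\chi(C)|$. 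Everything then reduces to the claim $\deg M\leq\tfrac{g-1}{4}|\chi(C)|$.

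The crux, and the step I expect to be the main obstacle, is exactly this factor-two improvement over the naive Arakelov bound $\deg M\leq\tfrac{g-1}{2}|\chi(C)|$ (which would only recover $L_f\leq g$). Here I would use two Teichmüller-specific features: first, that $\mathbb{M}$ carries no further maximal-Higgs summand, since $\lambda_1=1$ occurs with multiplicity one and the remaining exponents are strictly $<1$; and second, that on a Teichmüller curve the Higgs field is essentially cup product with the generating form $\omega$, which forces a rigid, rank-one-like behaviour of $\theta|_M$. I would encode this in a Harder--Narasimhan / Weierstrass-type filtration $0=\mathcal F_0\subset\mathcal F_1\subset\cdots\subset\mathcal F_{g-1}=M$ by saturated Higgs subsheaves, whose successive slopes are governed by the vanishing orders $m_1,\dots,m_n$ of $\omega$, and estimate each graded degree by $\tfrac14|\chi(C)|$; summing gives the desired bound. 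Controlling these graded slopes uniformly, rather than only on average, is the delicate point.

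Finally I would run the equality analysis by tracking when each estimate above is sharp. Equality forces every graded piece of the filtration to have degree exactly $\tfrac14|\chi(C)|$, which rigidifies $\theta$ and pins down the configuration of zeros of $\omega$; this is precisely where the hyperelliptic involution must appear, and the admissible configurations collapse to the two extremal strata $\Omega\mathcal{M}_g^{hyp}(g-1,g-1)$ and $\Omega\mathcal{M}_g^{hyp}(1^{2g-2})$. As a consistency check, both of these have Lyapunov spectrum $1,\tfrac12,\dots,\tfrac12$, whose sum is indeed $(g+1)/2$, in agreement with Theorem~\ref{theoremnonvarying}.
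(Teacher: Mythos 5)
First, a point of comparison: the paper does not prove Theorem~\ref{upperboundteich} at all --- it is imported verbatim from \cite{UpperboundsTeichm} and used as a black box --- so your argument has to stand entirely on its own. Its first half does stand: the translation via Kontsevich's formula, the Deligne splitting $\mathbb{V}=\mathbb{L}\oplus\mathbb{M}$ with $\deg L=\tfrac12|\chi(C)|$, the resulting identity $L_f=1+2\deg M/|\chi(C)|$, and the observation that the slope inequality combined with $\lambda_f=12\kappa_f/L_f$ is sharp only for $\Omega\mathcal{M}_g^{hyp}(g-1,g-1)$ are all correct, and $\deg M\leq\tfrac{g-1}{4}|\chi(C)|$ is indeed the right target.

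The gap is at the step you yourself flag as the crux, and it is not merely an omitted computation: the mechanism you propose cannot work as stated. You want a full flag $0=\mathcal F_0\subset\cdots\subset\mathcal F_{g-1}=M$ whose line-bundle graded pieces each have degree at most $\tfrac14|\chi(C)|$, with equality in the theorem forcing every piece to have degree exactly $\tfrac14|\chi(C)|$; your ``consistency check'' that the two equality strata have spectrum $1,\tfrac12,\dots,\tfrac12$ rests on the same picture. But for Teichm\"uller curves in $\Omega\mathcal{M}_g^{hyp}(g-1,g-1)$ the Weierstrass filtration of \cite{weierstrassfiltration} produces a sub-line bundle of $M=f_*\omega_{X/C}/L$ of normalized slope $\tfrac{g-1}{g}$, i.e.\ of degree $\tfrac{g-1}{2g}|\chi(C)|>\tfrac14|\chi(C)|$ for $g\geq 3$ (this filtration computes the Harder--Narasimhan filtration in the hyperelliptic strata, with slopes $1,\tfrac{g-1}{g},\dots,\tfrac1g$, consistent with the total sum $\tfrac{g+1}{2}$ but not with a uniform bound of $\tfrac12$ per piece). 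So $M$ is not semistable in the very equality case you need, no refinement of its Harder--Narasimhan filtration can have all graded degrees bounded by $\tfrac14|\chi(C)|$, and the Lyapunov spectrum there is $1,\tfrac{g-1}{g},\dots,\tfrac1g$ rather than $1,\tfrac12,\dots,\tfrac12$. Only the \emph{average} graded slope of $M$ is $\tfrac12$, which is exactly the statement $\deg M\leq\tfrac{g-1}{4}|\chi(C)|$ you set out to prove; any valid argument must bound the sum directly rather than term by term, and the equality analysis must likewise be rebuilt, since it cannot proceed by rigidifying each graded piece.
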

To our knowledge, for any~$g\geq 2$ there are no previously known semi-stable fibrations with a speed greater than~$(g+1)/2$. For hyperelliptic Teich\-mül\-ler curves, in particular when the flat surface has only one zero, a lower bound for~$L_f$ is known by~\cite{weierstrassfiltration}, which is asymptotically optimal as~$g \to \infty$. But even for Teich\-mül\-ler curves beyond the cases where the type of zeros fall into the non-varying phenomenon, there are intriguing open problems about the speed. Already for fibrations with fiber genus~$3$ and when the flat surface has four simple zeros it is challenging to get good lower bounds for $L_f$. Any statement like this must except the unique (as shown in \cite{shimundteichm}) Teich\-mül\-ler curve of this type, where the speed is as low as it could possibly be, i.e.\ $L_f=1$. 

\section{Fibrations}
\label{fibrations}

Let~$X$ be a smooth projective surface and let~$C$ be a smooth curve. We will call $f: X \rightarrow C$ a \emph{fibration}, if the map~$f$ is surjective and all fibers are connected curves. If all smooth fibers are isomorphic to a fixed curve, we speak of a \emph{isotrivial fibration}, if all fibers are smooth, the fibration is \emph{smooth} and if the fibration is isotrivial and smooth it is called \emph{locally trivial}. If no fiber contains a~$(-1)$-curve we say the fibration is \emph{relatively minimal}.
\par
Let~$g_C:=g(C)$ be the genus of the base curve~$C$, respectively $g :=g(F)$ the genus of a general fiber~$F$ of~$f$. If~$g=0$, then the fibration~$f:X\rightarrow C$ is nothing but a \emph{ruled surface}. We are going to introduce them in Section~\ref{chapterruledsurf} in another context. If~$g=1$, then~$f$ is called an \emph{elliptic fibration}. Since elliptic fibrations have been classified in the past (see~\cite{beauvilleelliptic},~\cite{kodairaelliptic2} and~\cite{kodairaelliptic1}) we will from now on always assume a fiber genus of~$g\geq 2$.
\par
The basic invariants of~$f:X\rightarrow C$ arise as the self-intersection number and the degree of the push-forward of the \emph{relative canonical sheaf} 
$$\omega_{f}:=\omega_X\otimes(f^{*}\omega_C)^{\vee}.$$
The push-forward~$f_*\omega_{f}$ is a vector bundle on~$C$ of rank~$g$, which we will call \emph{Hodge bundle of~$f$}. The basic relative invariants for a fibration are now
$$\omega_f^2, \qquad \deg f_*\omega_f \qquad \text{and } \delta_f:=c_2(X)-4(g_C-1)(g-1).$$
The formula of these invariants is
$$\begin{aligned}
 \omega_f^2 = c_1^2(X) - 8(g_C-1)(g-1), \\
 \deg f_* \omega_{f} = \chi(\mathcal{O}_X) - (g_C-1)(g-1).
\end{aligned}$$
Finally we have two basic quantities, since all three invariants are related by Noether's formula
\begin{equation}
\label{Noether}
\omega_{f}^2=12\deg f_* \omega_{f}-\delta_f.
\end{equation}
We will often denote the number~$\deg f_* \omega_{f}$ by~$\chi_f$. We say that the fibration~$f$ is \emph{semi-stable} if all fibers~$F$ are semi-stable, i.e.\ $F$ is reduced, a singularity in~$F$ is at most a node and every smooth rational component~$K$ of~$F$ has at least two points in common with~$F - K$. Note that every semi-stable fibration is relatively minimal and that~$\delta_f$ is nothing but the number of nodes in all singular fibers. In fact we have~$\delta_f=0$ if and only if~$f$ is a fibration that has no singular fibers, i.e.\ a \emph{smooth fibration}. For a relatively minimal fibration it is well-known that~$\omega_f^2$ and~$\chi_f$ are both non-negative numbers and moreover is~$\chi_f$ zero if and only if~$f$ is locally trivial~{\cite[Theorem~III.18.2]{compactcomplex}}. All fibrations will from now on be not isotrivial, unless stated otherwise.

\subsection{Slope of fibrations}
\label{chapterslope}
For a fibration~$f:X\rightarrow X$, we already defined the \emph{slope}~$\lambda_f$, defined as the quotient
$$\lambda_f:=\frac{\omega_f^2}{\chi_f}.$$

The slope is well-defined, since we assume~$f$ to be not isotrivial and therefore we have~$\chi_f>0$. In fact we do not require~$f:X\rightarrow C$ to be semi-stable, we just need the assumption that the fibration is relatively minimal. Due to Noether's formula~\eqref{Noether} we can directly deduce the maximum~$\lambda_f\leq 12$, which is attained if and only if the fibration is smooth. On the other hand Cornalba-Harris~\cite{cornalbaharris} for semi-stable fibrations and Xiao~\cite{lowslope} with different methods for relatively minimal fibrations  have shown the \emph{slope inequality}.
\begin{Thm}[{\cite[Theorem 2]{lowslope}}]
\label{slopeinequality}
Let~$f:X\rightarrow C$ be a relatively minimal fibration of genus~$g\geq 2$. Then we have
$$\frac{4(g-1)}{g}\leq\lambda_f\leq 12.$$
Furthermore we have~$\lambda_f=12$ if and only if~$f$ has no singular fibers.
\end{Thm}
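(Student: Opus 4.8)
The plan is to treat the two bounds by entirely different mechanisms. The upper bound is purely formal: by Noether's formula~\eqref{Noether} we have $\omega_f^2 = 12\chi_f - \delta_f$, and since $\delta_f$ counts the nodes of the singular fibers it is non-negative; as $\chi_f > 0$ for a non-isotrivial relatively minimal fibration, dividing yields
$$\lambda_f = \frac{\omega_f^2}{\chi_f} = 12 - \frac{\delta_f}{\chi_f} \leq 12,$$
with equality precisely when $\delta_f = 0$, i.e.\ when $f$ has no singular fibers. All the real content therefore lies in the lower bound $\lambda_f \geq 4(g-1)/g$, for which I would follow Xiao's method of slicing the Hodge bundle by its Harder--Narasimhan filtration.

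First I would write the Harder--Narasimhan filtration of the Hodge bundle,
$$0 = \mathcal{E}_0 \subsetneq \mathcal{E}_1 \subsetneq \cdots \subsetneq \mathcal{E}_n = f_*\omega_f,$$
with strictly decreasing quotient slopes $\mu_1 > \cdots > \mu_n$ and ranks $r_i = \operatorname{rank}\mathcal{E}_i$. Two elementary facts anchor the argument: the degree decomposes as $\chi_f = \sum_{i=1}^n \mu_i(r_i - r_{i-1})$, and Fujita's semipositivity theorem, which asserts that $f_*\omega_f$ is nef, forces the smallest slope to satisfy $\mu_n \geq 0$. Next, for each index $i$ I would restrict attention to the evaluation $f^*\mathcal{E}_i \to \omega_f$ and split the resulting relative linear system into its moving and fixed parts, $\omega_f \equiv M_i + Z_i$ with $M_i$ nef. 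Writing $d_i = M_i \cdot F$ for the degree of the moving part on a general fiber $F$, one gets $0 \leq d_1 \leq \cdots \leq d_n = 2g-2$, the final value because the full relative canonical map realizes the canonical map of $F$, of image degree $2g-2$ in both the hyperelliptic and non-hyperelliptic cases. Clifford's theorem, applied to $\mathcal{E}_i|_F$ as a special subsystem of the canonical system, supplies the crucial lower bound $d_i \geq 2(r_i - 1)$ whenever $d_i < 2g-2$, with equality only in the hyperelliptic situation.

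The geometric heart of the argument is Xiao's intersection inequality
$$\omega_f^2 \geq \sum_{i=1}^{n-1}(d_i + d_{i+1})(\mu_i - \mu_{i+1}) + 2 d_n \mu_n,$$
which I expect to be the main obstacle. It is proved by intersecting $\omega_f$ with the nef moving classes $M_i$, comparing $M_i \cdot \omega_f$ with the slopes $\mu_i$, and reassembling the pieces by an Abel-summation rearrangement combined with the Hodge index theorem applied fiber by fiber; keeping track of where the $d_i$ saturate at $2g-2$ is the delicate bookkeeping. Granting this inequality, the lower bound follows by a linear-programming estimate: inserting the Clifford bounds $d_i \geq 2(r_i-1)$ and the formula for $\chi_f$, the extremal configuration is the semistable one $n = 1$, $r_1 = g$, $d_1 = 2g-2$, for which the inequality reads $\omega_f^2 \geq 2(2g-2)\mu_1 = \frac{4(g-1)}{g}\chi_f$, and every other configuration only improves on this. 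Finally, tracking when all these inequalities are simultaneously sharp --- in particular Clifford's theorem --- shows that the lower bound can be approached only through hyperelliptic fibers, in agreement with the fact that equality is attained only by special hyperelliptic fibrations.
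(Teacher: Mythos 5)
The paper does not prove this theorem: it is quoted from Xiao's work and only the ingredients (the Harder--Narasimhan setup, Lemma~\ref{lemmaseqofind}, and the $g=2$ case in Example~\ref{bspseqofind}) appear in the text. Your treatment of the upper bound via Noether's formula is exactly the paper's remark and is fine, with the small caveat that for a merely relatively minimal (not semi-stable) fibration one justifies $\delta_f\geq 0$, and $\delta_f=0$ iff all fibers are smooth, by the local inequality $e(F_t)\geq e(F)$ rather than by ``counting nodes''. Your setup for the lower bound --- HN filtration, $\mu_n\geq 0$ from semipositivity, the moving-part degrees $d_i$ with $d_n=2g-2$, the Clifford bound $d_i\geq 2r_i-2$, and Xiao's inequality for the full index sequence --- is the correct toolkit and matches the machinery the paper assembles in Section~\ref{chapterslope}.

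The gap is in your final ``linear-programming'' step. It is not true that, after inserting the Clifford bounds into the single full-sequence inequality, every configuration improves on $n=1$. Concretely, take $g=4$, $n=4$, $(r_1,\dots,r_4)=(1,2,3,4)$, $(d_1,\dots,d_4)=(0,2,4,6)$ (all Clifford bounds tight) and $\mu=(1,\epsilon,\epsilon^2,\epsilon^3)$: the right-hand side of your displayed inequality tends to $(d_1+d_2)\mu_1=2$ as $\epsilon\to 0$, while $\tfrac{4(g-1)}{g}\chi_f\to 3$. The middle terms with $r_i<g/2$ carry a genuinely negative deficit $-2+4r_i/g$ that the full-sequence inequality alone cannot absorb. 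What closes the argument is a \emph{second} application of Lemma~\ref{lemmaseqofind}, with the index sequence $\{1,n\}$, which is precisely the paper's Example~\ref{bspseqofind}: $\omega_f^2\geq(2g-2)(\mu_1+\mu_n)$. The full sequence gives $\omega_f^2\geq 4\chi_f-2\mu_1-2\mu_n$ (using $d_i+d_{i+1}\geq 4r_i-2$ for $i<n$ and $d_n+d_{n+1}=4g-4$), and then
$$g\,\omega_f^2=(g-1)\,\omega_f^2+\omega_f^2\;\geq\;(g-1)\bigl(4\chi_f-2(\mu_1+\mu_n)\bigr)+(2g-2)(\mu_1+\mu_n)\;=\;4(g-1)\chi_f,$$
which is the slope inequality. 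Without this combination (or an equivalent adaptive choice of index sequence) your concluding claim is false as stated, so the proof as written does not go through.
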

Xiao also conjectured that equality in the lower slope bound~$4(g-1)/g$ can only be attained by hyperelliptic fibrations, which was shown to be true in~{\cite[Proposition 2.6]{konno}}. The lower bound is in fact sharp, we will construct examples in low genus attaining the lower slope inequality later (see Example~\ref{bspbeauville}).

\subsubsection{Slope of non-hyperelliptic fibrations}
We call a fibration~$f:X\rightarrow C$ \emph{non-hyperelliptic}, if the general fiber is a non-hyperelliptic curve of genus~$g$. Note that this implies~$g\geq 3$. We just mentioned that the lower slope inequality can only be attained by hyperelliptic fibrations. Hence there might exist better lower bounds for the slope of a non-hyperelliptic fibration. A lot is known in this direction, for example in~\cite{konnogen6} a better bound for a trigonal fibration was proven and generalised to~$c$-gonal fibrations in~\cite{gonality}. Unfortunately, precise bounds for non-hyperelliptic fibrations are still only known for small fiber genera. Let~$f:X\rightarrow C$ be a relatively minimal, non-hyperelliptic fibration. Then we have as strict lower bounds for the slope

\begin{align}
\label{lowboundsnonhyp}
\lambda_f \geq
\left\{
\begin{array}{l l l l r}
& 3 &  \text{ for }g=3, & \text{ } & \text{\cite{horikawagen3} and \cite{konnogen3}}\\ 
& \frac{24}{7} & \text{ for }g=4, & \text{ }  & \text{\cite{chen} and \cite{konno}} \\
& 4 & \text{ for }g=5, & \text{ }  & \text{\cite{konno}}\\ 
\end{array}
\right.
\end{align}
Additionally it was recently discovered in~\cite{gonality} that the slope of a non-hy\-per\-el\-lip\-tic fibration of genus~$g\geq 16$ has as lower bound~$\lambda_f\geq 4$. We follow here their work to state a lower slope bound for a general non-hyperelliptic fibration.
\par
To address the slope one usually works with the Harder-Narasimhan filtration for the Hodgbe-bundle~$f_*\omega_f$ as introduced by Xiao in~\cite{lowslope}. For this reason we let~$\mathcal{E}$ be a non-zero vector bundle on the curve~$C$. We define the~\emph{slope of~$\mathcal{E}$} as the rational number
\begin{equation*}
\mu(\mathcal{E}):=\frac{\deg\mathcal{E}}{\rm{rank}\mathcal{E}}.
\end{equation*}
We call a bundle~$\mathcal{E}$ \emph{stable}, respectively \emph{semi-stable}, if for every non-zero subbundle~$\mathcal{E}'\subseteq\mathcal{E}$ we have~$\mu(\mathcal{E}')<\mu(\mathcal{E})$, resp.~$\mu(\mathcal{E}')\leq\mu(\mathcal{E})$. We further call a bundle~$\mathcal{E}$ \emph{positive}, respectively \emph{semi-positive}, if for all quotient bundles~$\mathcal{E}\twoheadrightarrow\mathcal{Q}$ we have~$\deg\mathcal{Q}>0$, resp.~$\deg\mathcal{Q}\geq 0$. It is well known that the Hodge bundle~$f_*\omega_f$ for a relatively minimal fibration~$f:X\rightarrow C$ is semi-positive. The \emph{Harder-Narasimhan filtration} for a vector bundle~$\mathcal{E}$ is an unique filtration
$$0=\mathcal{E}_0\subseteq\mathcal{E}_1\subseteq\mathcal{E}_2\dots\subseteq\mathcal{E}_n=\mathcal{E},$$
such that for~$i=1,\dots,n$
\begin{itemize}
\item[•] each of the quotients~$\mathcal{E}_i/\mathcal{E}_{i-1}$ is semi-stable of slope~$\mu_i:=\mu(\mathcal{E}_i/\mathcal{E}_{i-1})$,
\item[•] the slopes are strictly decreasing, that is~$\mu_i>\mu_j$ for~$i<j$.
\end{itemize}
This filtration originates from~\cite{HN-filtration} which explains its name. We will from now on always consider the Harder-Narasimhan filtration of the Hodge bundle~$f_*\omega_f$ and abbreviate it by H-N filtration. It is clear from the semi-positivity of~$f_*\omega_f$, that we have~$\mu_n\geq 0$. By defining~$r_i:=\rm{rank}\mathcal{E}_i$ and setting~$\mu_{n+1}:=0$ we can compute the degree of the Hodge bundle by
\begin{equation}
\label{summechif}
\chi_f=\sum\limits_{i=1}^{n} r_i(\mu_i-\mu_{i+1}).
\end{equation}
\begin{Rem}
The sum of the first~$k$ non-negative Lyapunov exponents is always greater or equal than the speed of any subbundle of the H-N filtration. More precisely, let~$f:X\rightarrow C$ be a semi-stable fibration with associated Lyapunov exponents~$L_f=\sum_{i=1}^g\lambda_i$. Then for any~$1\leq k\leq g$ we have
\begin{equation}
\label{polygon}
\frac{\deg\mathcal{E}_i}{2g_C-2+s}\leq\sum\limits_{i=1}^k\lambda_i,
\end{equation}
by~\cite{lowboundslyapunov}. Of course~\eqref{polygon} is an equality for~$k=g$.
\end{Rem}
The crucial part of Xiao's proof of the slope inequality is now the definition of a 'degree' corresponding to the moving part of~$\mathcal{E}_i$ for~$1\leq i\leq n$. For this reason we let~$\mathcal{L}$ be a sufficiently ample line bundle on~$C$ such that~$\mathcal{E}_i\otimes\mathcal{L}$ is generated by its global sections. We denote by~$\Lambda(\mathcal{E}_i)\subseteq|\omega_f\otimes f^{*}\mathcal{L}|$ the linear subsystem corresponding to sections in~$H^0(C,\mathcal{E}_i\otimes\mathcal{L})$. Now~$Z(\mathcal{E}_i)$ is defined as the fixed part of~$\Lambda(\mathcal{E}_i)$, while~$M(\mathcal{E}_i)=\omega_f-Z(\mathcal{E}_i)$ is the moving part. Note that these definitions do not depend on the choice of the line bundle~$\mathcal{L}$. Let~$F$ be a general fiber of the fibration~$f:X\rightarrow C$, then we define
$$d_i:=M(\mathcal{E}_i).F$$
and set further~$d_{n+1}:=2g-2$. For example we have~$d_n=2g-2$, whereas the dimension of the restricted linear subsystem~$\Lambda(\mathcal{E}_i)|_F$ is~$r_i-1$ (see~{\cite[page~157]{hartshorne}}). The next lemma is the main tool for working with the slope via the H-N filtration.
\begin{Lemma}[{\cite[Lemma~2]{lowslope}}]
\label{lemmaseqofind}
Let~$f:X\rightarrow C$ be a relatively minimal fibration. Then for any sequence of indices~$1\leq i_1<\dots<i_k\leq n$ we have
$$\omega_f^2\geq\sum\limits_{i=1}^{k}(d_{i_j}+d_{i_{j+1}})(\mu_{i_j}-\mu_{i_{j+1}}),$$
where~$i_{k+1}:=n+1$.
\end{Lemma}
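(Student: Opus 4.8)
The plan is to run Xiao's argument: bound $\omega_f^2$ from below by a telescoping sum built out of the nef moving parts attached to the chosen steps of the H-N filtration. Write $M_j:=M(\mathcal{E}_{i_j})$ and $Z_j:=Z(\mathcal{E}_{i_j})$ for $1\le j\le k$. Two structural facts are needed. First, the inclusions $\mathcal{E}_{i_1}\subseteq\dots\subseteq\mathcal{E}_{i_k}\subseteq f_*\omega_f$ induce inclusions of the linear subsystems $\Lambda(\mathcal{E}_{i_1})\subseteq\dots\subseteq\Lambda(\mathcal{E}_{i_k})$; a larger system has a smaller fixed part, so $Z_1\ge\dots\ge Z_k\ge 0$, and hence the moving parts increase, $M_1\le\dots\le M_k\le\omega_f$, with every difference $M_{j+1}-M_j=Z_j-Z_{j+1}$ effective. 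Second, and this is the crux, for each $i$ the $\mathbb{Q}$-divisor $M(\mathcal{E}_i)-\mu_iF$ is nef. I would isolate this as a key lemma and prove it separately; granting it, the rest is formal.

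Now introduce the twisted divisors $N_j:=M_j-\mu_{i_j}F$ for $1\le j\le k$ and $N_{k+1}:=\omega_f$ (corresponding to the index $i_{k+1}=n+1$, which fits the pattern since $\mu_{n+1}=0$). Each $N_j$ is nef (for $N_{k+1}=\omega_f$ this is the standard nef-ness of the relative canonical divisor of a relatively minimal fibration of genus $g\ge 2$), and $N_j\cdot F=d_{i_j}$. The chain is nested, since
\[
N_{j+1}-N_j=(M_{j+1}-M_j)+(\mu_{i_j}-\mu_{i_{j+1}})F\ \ge\ (\mu_{i_j}-\mu_{i_{j+1}})F,
\]
where $M_{j+1}-M_j$ is effective and $\mu_{i_j}-\mu_{i_{j+1}}>0$ because the H-N slopes strictly decrease and $i_j<i_{j+1}$. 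Telescoping the square of the top divisor gives the identity
\[
\omega_f^2=N_{k+1}^2=N_1^2+\sum_{j=1}^{k}(N_{j+1}+N_j)\cdot(N_{j+1}-N_j).
\]
Here $N_1^2\ge 0$ by nef-ness, while $N_{j+1}+N_j$ is nef with $(N_{j+1}+N_j)\cdot F=d_{i_j}+d_{i_{j+1}}$. Intersecting the nef divisor $N_{j+1}+N_j$ with the effective divisor $N_{j+1}-N_j$, and using $N_{j+1}-N_j\ge(\mu_{i_j}-\mu_{i_{j+1}})F$ together with the fact that a nef divisor meets an effective divisor non-negatively, each summand is at least $(\mu_{i_j}-\mu_{i_{j+1}})(d_{i_j}+d_{i_{j+1}})$. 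Summing and discarding $N_1^2$ yields exactly the asserted inequality, with the convention $i_{k+1}=n+1$.

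The step I expect to be the main obstacle is the key lemma that $M(\mathcal{E}_i)-\mu_iF$ is nef. To prove it I would test the divisor against every irreducible curve $\Gamma\subset X$. For vertical $\Gamma$ one has $F\cdot\Gamma=0$, so nef-ness of $M(\mathcal{E}_i)$ suffices; the content is the horizontal case, where $\Gamma\to C$ has some degree $m\ge 1$ and one must show $M(\mathcal{E}_i)\cdot\Gamma\ge m\mu_i$. This is where the Harder--Narasimhan data enters: $\mathcal{E}_i$ is a step of the filtration of the semi-positive bundle $f_*\omega_f$, so its minimal slope is exactly $\mu_i\ge 0$, and the sections of $\mathcal{E}_i\otimes\mathcal{L}$ cutting out $M(\mathcal{E}_i)$ restrict to $\Gamma$ in a way controlled by that slope. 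Quantitatively one pulls $\mathcal{E}_i$ back along the normalization of $\Gamma\to C$, compares the vanishing of the restricted sections with $\deg\mathcal{E}_i$, and invokes semistability of the minimal H-N quotient to extract the factor $m\mu_i$. The delicate point is the careful treatment of the base locus of $\mathcal{E}_i\otimes\mathcal{L}$ and of the passage to $\Gamma$; the semi-positivity of $f_*\omega_f$ recalled earlier guarantees $\mu_i\ge 0$, so that twisting down by $\mu_iF$ cannot destroy nef-ness.
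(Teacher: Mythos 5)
The paper does not prove this lemma at all --- it is quoted verbatim from Xiao's \emph{Fibered algebraic surfaces with low slope} --- so there is no in-paper proof to compare against; your argument is precisely Xiao's original one. The formal part (nesting of the fixed/moving parts along the filtration, the telescoping of $N_{k+1}^2=N_1^2+\sum_j(N_{j+1}+N_j)\cdot(N_{j+1}-N_j)$ with $N_{k+1}=\omega_f$, and the estimate of each summand via nefness against the effective difference plus $(\mu_{i_j}-\mu_{i_{j+1}})F$) is complete and correct. The one ingredient you only sketch is the nefness of $M(\mathcal{E}_i)-\mu_i F$, which is exactly Xiao's Lemma~1 and is genuinely the nontrivial input (its proof needs the global-generation criterion for bundles whose minimal Harder--Narasimhan slope exceeds $2g_C$, combined with a base-change or limiting argument to remove the resulting $2g_C$ correction); your outline names the right ingredients but would need to be carried out to make the proof self-contained.
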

\begin{Ex}
\label{bspseqofind}
By taking the trivial sequence of indices~$1\leq n$, we obtain by applying Lemma~\ref{lemmaseqofind} that
$$\omega_f^2\geq(2g-2)(\mu_1-\mu_n)+(4g-4)(\mu_n)=(2g-2)(\mu_1+\mu_n),$$
which implies for~$g=2$ the slope inequality as in Theorem~\ref{slopeinequality}.
\end{Ex}
Our goal is to prove better lower bounds for the slope of non-hyperelliptic fibrations. For that reason we introduce a generalization of being hyperelliptic. We call a genus~$g$ fibration~$f:X\rightarrow C$ a \emph{double cover of type~$(g,\gamma)$}, if there is a (not necessarily smooth) surface~$Y$ and morphisms~$\theta:X\rightarrow Y$ and~$\varphi:Y\rightarrow C$ such that~$\deg(\theta)=2$, the general fiber of~$\varphi$ is of genus~$\gamma$ and the diagram in Figure~\ref{doublecoveroftype} commutes.
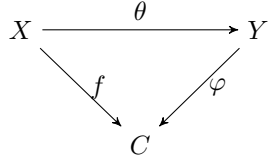
\begin{figure}[!htbp]
\centering
	\begin{tikzpicture}[every node/.style={anchor=center},>=stealth']
	\matrix (m) [matrix of math nodes, row sep=3em, column sep=3em]
	{X &  & Y\\
		 & C &  \\ };
	\draw[->] (m-1-1) -- node[above] {$\theta$} (m-1-3);
	\draw[->] (m-1-1) -- node[right] {$f$} (m-2-2);
	\draw[->] (m-1-3) -- node[right] {$\varphi$} (m-2-2);
	\end{tikzpicture}
\caption{\label{doublecoveroftype}A double cover fibration of type~$(g,\gamma)$, where the general fiber of~$\varphi$ is of genus~$\gamma$.}
\end{figure}
It is clear by this definition that a hyperelliptic fibration is nothing else than a double cover fibration of type~$(g,0)$. The next proposition is  basically the same as~{\cite[Proposition~3.5]{gonality}}. We just adjust a small detail in the proof to improve the lower bound.
\begin{Prop}
\label{propdoublecoveroftype}
Let~$f:X\rightarrow C$ be a relatively minimal fibration of genus $g\geq 3$. If~$f$ is not a double cover fibration or if~$f$ is a double cover fibration of type~$(g,\gamma)$ with~$4\gamma\geq g-1$, then we have
$$\lambda_f\geq\frac{9(g-1)}{2g+1}.$$
\end{Prop}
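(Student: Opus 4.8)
The plan is to stay inside Xiao's Harder--Narasimhan machinery and to prove the equivalent statement $\omega_f^2\ge \frac{9(g-1)}{2g+1}\,\chi_f$, working only with the numerical data $(r_i,\mu_i,d_i)$ of the H-N filtration of $f_*\omega_f$. The two ingredients are Lemma~\ref{lemmaseqofind}, which bounds $\omega_f^2$ from below by a positive combination of the slope gaps $\mu_{i}-\mu_{i+1}$ weighted by the degrees $d_i$, and the expression \eqref{summechif} for $\chi_f$ as the same combination weighted by the ranks $r_i$; comparing the two reduces everything to comparing the $d_i$ against the $r_i$. It is worth noting at the outset that Lemma~\ref{lemmaseqofind} alone can never beat the hyperelliptic value $\tfrac{4(g-1)}{g}$: in the semistable case the relative canonical bundle is relatively base-point free, so $d_n=2g-2$ and the trivial index sequence only returns $\tfrac{4(g-1)}{g}$. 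The whole improvement must therefore come from the fibrewise geometry, and this is exactly where the hypothesis is used.

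The geometric input is a strengthened Clifford/Castelnuovo estimate for the moving part. On a general fibre $F$ the restricted subsystem $\Lambda(\mathcal{E}_i)|_F\subseteq|K_F|$ has projective dimension $r_i-1$ and moving part of degree $d_i$, hence defines a map $\phi_i\colon F\to\PP^{r_i-1}$. If $\phi_i$ factored through a degree-$2$ map -- the situation forced when $f$ is hyperelliptic, or more generally a double cover $\theta\colon X\to Y$, $\varphi\colon Y\to C$ of type $(g,\gamma)$ with $\gamma$ small -- one would only recover the weak bound $d_i\ge 2(r_i-1)$, which is responsible for $\tfrac{4(g-1)}{g}$. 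The assumption that $f$ is \emph{not} a double cover, or is one with $4\gamma\ge g-1$, is precisely the condition guaranteeing that the relative canonical image is large enough (equivalently that the Clifford index of $F$ is big enough) so that $\phi_i$ does not so degenerate; this yields a bound of the form $d_i\ge 3(r_i-1)$. Because one always has $d_i\le 2g-2$, such a bound can only hold in the range $r_i\le\tfrac{2g+1}{3}$, and it is this threshold that produces the denominator $2g+1$.

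Granting this, I would let $i_0$ be the largest index with $r_{i_0}\le\tfrac{2g+1}{3}$ and apply Lemma~\ref{lemmaseqofind} to an index sequence adapted to the jump at $i_0$ together with the terminal index $n$ (where $r_n=g$, $d_n=2g-2$). Dividing the resulting lower bound for $\omega_f^2$ by the matching value of $\chi_f$ gives a weighted average of ratios $\tfrac{d_i+d_{i+1}}{r_i}$, which I would minimise over the admissible slope configurations; the minimum is attained at the threshold rank and equals $\frac{9(g-1)}{2g+1}$.

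The \textbf{main obstacle} is the regime in which the Hodge bundle is close to semistable, so that all the slope gaps $\mu_i-\mu_{i+1}$ are small and the combinatorial bound of Lemma~\ref{lemmaseqofind} collapses toward the useless value $\tfrac{4(g-1)}{g}$. There the improved estimate has to be used \emph{globally} -- as a direct lower bound for $\omega_f^2$ in terms of $\chi_f$ via the degree of the relative canonical image, i.e.\ via the Clifford index of the general fibre, rather than gap-by-gap. This is precisely the step imported from \cite{gonality}, and the \emph{small detail} that sharpens their Proposition~3.5 lives here: a slightly more careful estimate in this terminal/semistable case is what upgrades the coefficient to the value $\frac{9(g-1)}{2g+1}$ recorded in the statement.
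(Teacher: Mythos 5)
Your proposal correctly locates the proof in the Harder--Narasimhan framework and correctly diagnoses that Lemma~\ref{lemmaseqofind} by itself can never beat $\tfrac{4(g-1)}{g}$ (since $d_n=2g-2$ always), but the object you then need and never supply is exactly the missing idea. The paper's proof does not run on Lemma~\ref{lemmaseqofind} plus a sharper fibrewise bound $d_i\ge 3(r_i-1)$; it runs on Lu--Zuo's inequality (3-22) of \cite{gonality}, which is a genuinely stronger statement: it bounds $\omega_f^2$ below by $\sum_i\bigl(c_i+\tfrac12(d_i+d_{i+1})\bigr)(\mu_i-\mu_{i+1})$ where the extra rank-dependent terms $c_i$ equal $\tfrac12(3r_i-2)$, $\tfrac12(2\theta_i-r_i)$, $\tfrac12(5r_i-6)$ on the three index ranges cut out by $l'$ (first index where $\Lambda(\mathcal{E}_i)|_F$ has degree two onto its image) and $l$ (first index where it is birational). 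These additive terms are what survive in the near-semistable regime you flag as the ``main obstacle''; without them the argument collapses exactly as you predict, and saying the fix is ``imported from \cite{gonality}'' is naming the gap rather than closing it. The only place Castelnuovo enters is the birational range $l\le i\le n-1$, where it gives $d_i\ge 2r_i$ (this, replacing Lu--Zuo's $d_i\ge 2r_i-\tfrac12$, is the ``small detail'' that upgrades the constant); your proposed bound $d_i\ge 3(r_i-1)$ is neither proved nor the one used.

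You also mischaracterize how the double-cover hypothesis enters. The proof does not use it to rule out degree-two degeneration of the maps $\phi_i$: on the contrary, it assumes the index $l'$ exists and works with the genus $\gamma$ of the degree-two image. The hypothesis ($f$ not a double cover, or $4\gamma\ge g-1$) is what makes the quantity $\theta_i=\min\{3r_i-3,\,2r_1+\gamma-1\}$ large enough that the middle-range estimate $\tfrac12(2\theta_i-r_i)+\tfrac12(d_i+d_{i+1})\ge\tfrac92 r_i-3$ holds. Finally, the closing step is not an optimization over index sequences adapted to a rank threshold $r_{i_0}\le\tfrac{2g+1}{3}$: summing the three ranges gives $\omega_f^2\ge\tfrac92\chi_f-3\mu_1-2\mu_n$, and the residual terms are absorbed by adding the elementary bound $\omega_f^2\ge(2g-2)(\mu_1+\mu_n)$ of Example~\ref{bspseqofind} with weight $\tfrac{3}{2g-2}$, which is where the denominator $2g+1$ actually comes from.
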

\begin{proof}
Let~$F$ be a general fiber of~$f$, then we define
$$l:=\min\{i:\Lambda(\mathcal{E}_i)|_F \text{ defines a birational map for } F\}$$
and
$$l':=\min\{i:\Lambda(\mathcal{E}_i)|_F \text{ is a finite map of degree two for } F\}.$$
According to the proof of~{\cite[Theorem 3.2]{gonality}} we may assume that such an~$l'$ exists. It is clear that~$1\leq l'<l\leq n$. We further let~$\gamma$ be the geometric genus of the image of~$F$ under~$\Lambda(\mathcal{E}_i)|_F$. Note that this~$\gamma$ is precisely the same if~$f$ is a double cover of type~$(g,\gamma)$.  For~$l'\leq i\leq l-1$ we define
$$\theta_i:=\left\{
\begin{array}{lll}
 1, & & \text{ if } i=l'=1 \text{ and } r_1=1;\\ 
 \min\{3r_i-3,2r_1+\gamma-1\}, & & \text{ otherwise. } \\
\end{array}
\right.$$
Now we use~{\cite[(3-22)]{gonality}}, which says in particular that
\begin{align}
\omega_f^2  \geq & \sum\limits_{i=1}^{l'-1}\left(\frac{1}{2}(3r_i-2)+\frac{1}{2}(d_i+d_{i+1})\right)(\mu_i-\mu_{i+1}) \nonumber \\
 & + \sum\limits_{i=l'}^{l-1}\left(\frac{1}{2}(2\theta_i-r_i)+\frac{1}{2}(d_i+d_{i+1})\right)(\mu_i-\mu_{i+1}) \label{nonhypboundeq0}\\
 & + \sum\limits_{i=l}^{n}\left(\frac{1}{2}(5r_i-6)+\frac{1}{2}(d_i+d_{i+1})\right)(\mu_i-\mu_{i+1}). \nonumber
\end{align}
We can now apply for the individual terms the following inequalities as stated in~{\cite[page 13\&14]{gonality}}, namely
\begin{align}
\label{nonhypboundeq1}
\frac{1}{2}(3r_i-2)+\frac{1}{2}(d_i+d_{i+1})\geq\frac{9}{2}r_i-2, & & \text{ for } 1\leq i\leq l'-1, \nonumber\\
\frac{1}{2}(2\theta_i-r_i)+\frac{1}{2}(d_i+d_{i+1})\geq\frac{9}{2}r_i-3, & & \text{ for } l'\leq i\leq l-1,
\end{align}
and we can directly compute
\begin{equation}
\label{nonhypboundeq3}
\frac{1}{2}(5r_n-6)+\frac{1}{2}(d_n+d_{n+1})=\frac{9}{2}g-5.
\end{equation}
As for~$l\leq i\leq n-1$ we know that~$\Lambda(\mathcal{E}_i)|_F$ defines a birational map for~$F$, we can apply Castelnuovo's bound as stated in~{\cite[III.§2.]{geomofalgcurvesvol1}}. Therefore we have
$$d_i\geq\frac{g}{m}+\frac{(m+1)r_i}{2}+\frac{m-1}{2}, \text{ where } m=\left\lfloor\frac{d-1}{r-1}\right\rfloor.$$
Since~$2\leq r_i\leq g-1$ this implies in particular that
\begin{equation}
\label{verbesserung}d_i\geq 2r_i
\end{equation}
By rearranging while using~$r_{i+1}\geq r_i$ we get to
\begin{align}
\frac{1}{2}(5r_i-6)+\frac{1}{2}(d_i+d_{i+1})\geq\frac{9}{2}r_i-3, & & \text{ for } l\leq i\leq n-1. \label{nonhypboundeq4}
\end{align}
Now using that
$$\chi_f=\sum\limits_{i=1}^n r_i(\mu_i-\mu_{i+1})$$
by~\eqref{summechif} and putting the inequalities~\eqref{nonhypboundeq1}, \eqref{nonhypboundeq3} and~\eqref{nonhypboundeq4} together with~\eqref{nonhypboundeq0} we arrive at
$$\omega_f^2\geq\frac{9}{2}-2\mu_1-\mu_l'-2\mu_n\geq\frac{9}{2}-3\mu_1-2\mu_n.$$
The last inequality follows from the H-N filtration since~$\mu_1\geq\mu_l'$. Now combining this with Example~\ref{bspseqofind} brings us to
$$\frac{2g+1}{2g-2}\omega_f^2\geq\frac{9}{2}\chi_f+\mu_n\geq\frac{9}{2}\chi_f,$$
which finishes the proof.
\end{proof}
\begin{Rem}
Lu and Zuo used in~{\cite[Proposition~3.5]{gonality}} the inequality given by $d_i\geq 2r_i-1/2$ instead of~$\eqref{verbesserung}$. This then implies as lower bound for the slope that
$$\lambda_f\geq\frac{18(g-1)}{4g+3}.$$
Note that for~$g=6$ their lower bound is the same as the general slope inequality from Theorem~\ref{slopeinequality}.
\end{Rem}
\begin{Cor}
\label{corlowboundnonhyp}
Let~$f:X\rightarrow C$ be a non-hyperelliptic fibration of genus~$g\geq 3$. Then we have
$$\lambda_f\geq\frac{9(g-1)}{2g+1} \text{ for }3\leq g\leq 12,$$
and
$$\lambda_f\geq 4 \text{ for } g>12.$$
\end{Cor}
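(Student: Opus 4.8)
The plan is to combine Proposition~\ref{propdoublecoveroftype} with a separate bound for the non-hyperelliptic fibrations that escape its hypotheses, namely the double covers with a small value of $\gamma$. First I would record the basic dichotomy: since a double cover fibration of type $(g,0)$ has hyperelliptic general fibre, a non-hyperelliptic fibration $f:X\to C$ is \emph{either} not a double cover fibration at all, \emph{or} a double cover of type $(g,\gamma)$ with $\gamma\geq 1$. In the first case, and in the second case when moreover $4\gamma\geq g-1$, Proposition~\ref{propdoublecoveroftype} applies directly and yields
$$\lambda_f\geq\frac{9(g-1)}{2g+1}.$$
A short computation shows that $\frac{9(g-1)}{2g+1}\geq 4$ holds exactly when $g\geq 13$, so in this regime Proposition~\ref{propdoublecoveroftype} already delivers both asserted bounds. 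Note also that for $3\leq g\leq 5$ the condition $\gamma\geq 1$ forces $4\gamma\geq 4\geq g-1$, so Proposition~\ref{propdoublecoveroftype} covers every non-hyperelliptic fibration of these genera and the exceptional double-cover case never arises; as a cross-check, the stronger explicit bounds in~\eqref{lowboundsnonhyp} exceed $\frac{9(g-1)}{2g+1}$ there.

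It remains to treat the case where $f$ is a double cover of type $(g,\gamma)$ with $1\leq\gamma$ and $4\gamma<g-1$; this in particular forces $g\geq 6$. Here the key input is the slope bound for double covers
$$\lambda_f\geq\frac{4(g-1)}{g-\gamma},$$
which specialises for $\gamma=0$ to Xiao's sharp hyperelliptic bound $4(g-1)/g$ from Theorem~\ref{slopeinequality}. Establishing (or precisely citing) this inequality under the hypothesis $4\gamma<g-1$ --- equivalently $g>4\gamma+1$, so that the cover is sufficiently ramified --- is the step I expect to be the main obstacle. It requires descending to the genus-$\gamma$ base fibration $\varphi:Y\to C$ (possibly after resolving the singularities of $Y$), exploiting the decomposition $f_*\omega_f=\varphi_*\omega_\varphi\oplus\varphi_*(\omega_\varphi\otimes L)$ for the line bundle $L$ with $L^{2}=\mathcal{O}_Y(B)$ cutting out the branch divisor $B$, and then feeding the slope inequality for $\varphi$ together with the positivity of $B$ back into the relation $\omega_f^2=2(\omega_\varphi\otimes L)^2$; the care needed to control the contribution of the singularities of $Y$ is the delicate point.

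Granting this bound, the two claimed estimates follow by elementary arithmetic. For $g>12$ we have $\gamma\geq 1$, hence $g-\gamma\leq g-1$ and therefore $\frac{4(g-1)}{g-\gamma}\geq 4$, as required. For $6\leq g\leq 12$ one checks that $\frac{4(g-1)}{g-\gamma}\geq\frac{9(g-1)}{2g+1}$ is equivalent, after cancelling $g-1>0$ and cross-multiplying, to $\gamma\geq\frac{g-4}{9}$; this holds because $\gamma\geq 1$ while $\frac{g-4}{9}\leq\frac{8}{9}<1$ throughout this range. Assembling the cases --- Proposition~\ref{propdoublecoveroftype} when $f$ is not a double cover or satisfies $4\gamma\geq g-1$, and the double-cover bound otherwise --- then gives $\lambda_f\geq\frac{9(g-1)}{2g+1}$ for $3\leq g\leq 12$ and $\lambda_f\geq 4$ for $g>12$, completing the proof.
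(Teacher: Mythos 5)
Your proposal follows the paper's argument exactly: the same dichotomy driven by Proposition~\ref{propdoublecoveroftype}, and the same inequality $\lambda_f\geq 4(g-1)/(g-\gamma)$ for the remaining double covers of type $(g,\gamma)$ with $\gamma\geq 1$, which the paper does not prove but simply cites as \cite[Theorem 3.1]{slopedoublecover} — so the one step you flag as the ``main obstacle'' is resolved by that reference rather than by the descent argument you sketch. Your case analysis and the arithmetic in both regimes are correct.
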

\begin{proof}
If~$f$ is not a double cover of type~$(g,\gamma)$ with ~$4\gamma< g-1$, then we are done by Proposition~\ref{propdoublecoveroftype}. So let us assume the contrary. Then by {\cite[Theorem 3.1]{slopedoublecover}} we know that
$$\lambda_f\geq\frac{4(g-1)}{g-\gamma}\geq 4.$$
Since we assume~$f$ to be non-hyperelliptic, we conclude with~$\gamma\geq 1$.
\end{proof}
Note that the bound of Corollary~\ref{corlowboundnonhyp} is weaker than~\eqref{lowboundsnonhyp} but beats the classical slope inequality in Theorem~\ref{slopeinequality} for~$g\geq 6$.
\section{Speed of semi-stable fibrations}
Beside the slope, there is another relative invariants linked to a fibration in this paper that is playing a central role. The, for us, most important ratio is the \emph{speed}
$$ L_f \,=\, \frac{2\deg f_{*}\omega_{f}}{2g_{C}-2+s},$$
where~$f:X\rightarrow C$ is a semi-stable fibration of fiber genus~$g\geq 2$.
We recall from Section~\ref{sec:lyapunov} that~$L_f$ is exactly the sum of non-negative Lyapunov exponents, which was the reason for naming it speed.
As described in the introduction is the speed bounded from above by the strict Arakelov inequality~$L_f< g$. We will give a proof of this inequality in Theorem~\ref{arakelovineq}.
\begin{Rem}
\label{speednoinvariant}
For a semi-stable fibration~$f:X\rightarrow C$ let us consider the embedded curve~$\phi_C:C\hookrightarrow\M_g$. We can then calculate its speed by
$$L_f=2\frac{\deg\phi_C^*\lambda_g}{2g_C-2+s},$$
where~$\lambda_g$ is the Hodge class of~$\M_g$. Here the number s above is not an intersection number, but is the set-theoretic intersection of~$C$ with the boundary~$\M_g\setminus\mathcal{M}_g$. For this reason the speed is not an invariant for classes of curves in the moduli space of stable curves, in contrast to the slope. Indeed, consider for example two plane quartics~$F$ and~$G$. The pencil~$aF+bG$ for~$[a:b]\in\P^1$ gives (after blowing up at the base points) a curve~$C$ in~$\M_3$ with intersection~$3$ and~$27$ with~$\lambda_g$ and~$\delta_0$ respectively (see~{\cite[page~170]{moduli}}). If~$F$ and~$G$ are general then we have that~$s=27$, which is the degree of the discriminant locus. Actually, as speed we therefore calculate
$$L_f=\frac{2\cdot 3}{2\cdot 0-2+27}=\frac{6}{25}.$$
If we choose~$F$ general and~$G$ to have more than one irreducible node then the resulting curve will have the same curve class, hence the same intersection number with the boundary divisor~$\delta_0$, but in this case we have~$s<27$ and therefore a different speed.
\end{Rem}

\subsection{Types of nodes}
\label{singfibers}
Let~$f:X\rightarrow C$ be a semi-stable fibration. A singular point of~$F$ is called a \emph{separating node}, if the partial normalization at this point consists of two components. We further denote with~$\mathcal{Y}_{nc}\rightarrow\ \Delta_{nc}$ all singular fibers that do contain a non-separating node and define~$s_{nc}:=|\Delta_{nc}|$. We will call such singular fibers of~\emph{non-compact type}. Vice versa we define~$\mathcal{Y}_{ct}\rightarrow\ \Delta_{ct}$ to be all singular fibers of \emph{compact type}, i.e.\ fibers that contain no nodes but separating ones and denote~$s_{ct}:=|\Delta_{ct}|$. Let us denote with~$\delta_0(F)$ the number of all non-separating nodes in~$F$. We say that a separating node is \emph{of type~$i$}, if the partial normalization at this point consists of two components of arithmetic genus~$i$ and~$g-i$ (see Figure~\ref{nodetype}) and denote with~$\delta_i(F)$ the number of all separating nodes of type~$i$ in~$F$. We additionally set~$\delta(F):=\sum \delta_i(F)$ to be the number of all nodes in the singular fiber~$F$.

\begin{figure}[!htbp]
\centering
\begin{tikzpicture}[>=stealth,scale=0.8] 

\begin{scope}
\coordinate (P1) at (0,0);
\coordinate (P2) at (2,1.5);
\coordinate (P3) at (5,1.5);
\coordinate (P4) at (7,0);
\coordinate (P5) at (5,-1.5);
\coordinate (P6) at (2,-1.5);
\coordinate (P7) at (9,1.5);
\coordinate (P8) at (10,1.5);
\coordinate (P9) at (12,0);
\coordinate (P10) at (10,-1.5);
\coordinate (P11) at (9,-1.5);

\path[draw,name path=P1--P1] (P1) to[out=90,in=180] (P2) 
to[out=0,in=180] (P3)						      
to[out=0,in=90] (P4)
to[out=270,in=0] (P5)
to[out=180,in=0] (P6)
to[out=180,in=270] (P1);
\path[draw,name path=P4--P4] (P4) to[out=90,in=180] (P7) 
to[out=0,in=180] (P8)						      
to[out=0,in=90] (P9)
to[out=270,in=0] (P10)
to[out=180,in=0] (P11)
to[out=180,in=270] (P4);

\fill[black] (P4) circle (2pt);

\draw[] (2,-.05) arc(0:180:.65cm and .30cm) (2.1,.15) arc(0:-180:.75cm and .4cm);
\begin{scope}[xshift=4cm,yshift=0cm]
\draw[] (2.4,-.05) arc(0:180:.65cm and .30cm) (2.5,.15) arc(0:-180:.75cm and .4cm);
\end{scope}
\begin{scope}[xshift=1.5cm,yshift=0cm]
\draw[] (2.4,-.05) arc(0:180:.65cm and .30cm) (2.5,.15) arc(0:-180:.75cm and .4cm);
\end{scope}
\begin{scope}[xshift=6.5cm,yshift=0cm]
\draw[] (2.4,-.05) arc(0:180:.65cm and .30cm) (2.5,.15) arc(0:-180:.75cm and .4cm);
\end{scope}
\begin{scope}[xshift=9cm,yshift=0cm]
\draw[] (2.4,-.05) arc(0:180:.65cm and .30cm) (2.5,.15) arc(0:-180:.75cm and .4cm);
\end{scope}
\end{scope}
\node[draw=white] at (4.5,0) {$\dots$};
\node[draw=white] at (9.5,0) {$\dots$};
\node[draw=white] at (4,-1.8) {genus $g-i$};
\node[draw=white] at (9.5,-1.8) {genus $i$};
\node[draw=white] at (7,0.8) {$p$};

\end{tikzpicture}
\caption{\label{nodetype}A separating node~$p$ of type~$i\geq 1$.}
\end{figure}
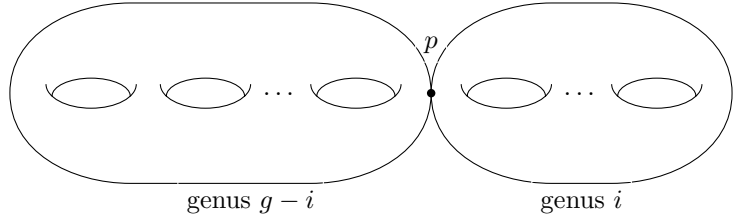

Assume that~$F$ consists of~$l(F)$ irreducible components such that $F=\sum_{i=1}^{l(F)}F_i$. Then we recall that the \emph{geometric genus} of~$F$ is defined as the sum of the geometric genera of the components, i.e.\
$$g_{geo}(F):=\sum_{i=1}^{l(F)}g(F_i'),$$
where~$F_i'$ is the normalization of~$F_i$.
\begin{Lemma}[{\cite[Lemma 2.2]{minnumber}}]
	\label{Lemmasingfibers}
	Let~$F$ be a semi-stable fiber of a relatively minimal fibration~$f:X\to C$ of genus~$g$. Then we have
	$$\delta(F)=g-g_{geo}(F)+l(F)-1.$$
\end{Lemma}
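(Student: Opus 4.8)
The plan is to reduce the identity to a single Euler-characteristic count via the normalization of $F$, using the constancy of the arithmetic genus in the family. First I would record that, since $f\colon X\to C$ is a surjective morphism from a smooth (hence integral) surface onto a smooth curve, it is flat; consequently the arithmetic genus $p_a(F)=1-\chi(\mathcal{O}_F)$ is constant along $C$ and therefore equals the genus $g$ of the general (smooth) fiber. This is the only place where I use that $F$ sits in a fibration rather than being an abstract nodal curve.

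Next, let $\nu\colon\widetilde F\to F$ be the normalization of $F$. Because $F$ is semi-stable its only singularities are nodes, so $\widetilde F=\bigsqcup_{i=1}^{l(F)}F_i'$ is the disjoint union of the normalizations of the $l(F)$ irreducible components. The heart of the argument is the normalization short exact sequence of sheaves on $F$
$$0\longrightarrow\mathcal{O}_F\longrightarrow\nu_*\mathcal{O}_{\widetilde F}\longrightarrow\bigoplus_{p}\CC_p\longrightarrow 0,$$
where the sum runs over the $\delta(F)$ nodes $p$ of $F$ and each $\CC_p$ is a skyscraper of length one. I would justify the length-one cokernel by the local model of a node: completing at $p$ gives $\mathcal{O}_{F,p}\cong k[[x,y]]/(xy)$, whose normalization is $k[[x]]\times k[[y]]$, and the inclusion of the former into the latter has one-dimensional cokernel (spanned, say, by the class of $(1,0)$). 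This local computation is the one genuinely non-formal step; everything else is bookkeeping, and it applies uniformly to separating and non-separating nodes alike, so that the total length of the cokernel sheaf is exactly $\delta(F)$.

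Finally I would take Euler characteristics in the displayed sequence. Since $\chi$ is additive on short exact sequences and $\nu$ is finite, this gives
$$\chi(\mathcal{O}_F)=\chi(\mathcal{O}_{\widetilde F})-\delta(F).$$
Using $\chi(\mathcal{O}_F)=1-g$ from the first step and $\chi(\mathcal{O}_{\widetilde F})=\sum_{i=1}^{l(F)}\bigl(1-g(F_i')\bigr)=l(F)-g_{geo}(F)$ from the disjoint-union decomposition, substitution and rearrangement yield
$$\delta(F)=g-g_{geo}(F)+l(F)-1,$$
as claimed. I expect the main obstacle to be conceptual rather than computational: one must secure the two structural inputs — that only honest nodes occur (guaranteed by semi-stability), so the cokernel has length precisely $\delta(F)$, and that the family is flat, so $p_a(F)=g$ — after which the conclusion is immediate.
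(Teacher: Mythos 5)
Your proof is correct and complete: flatness of $f$ over the smooth base gives $\chi(\mathcal{O}_F)=1-g$ for every fiber, the normalization sequence with length-one skyscrapers at each node gives $\chi(\mathcal{O}_F)=\chi(\mathcal{O}_{\widetilde F})-\delta(F)$, and the bookkeeping yields the claimed identity. The paper does not prove this lemma itself but only cites \cite[Lemma~2.2]{minnumber}; your argument is the standard one for nodal curves in a fibration, so there is nothing further to compare.
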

Using this lemma we easily see the equivalences
\begin{equation*}
\begin{aligned}
 g_{geo}(F)=g & \Longleftrightarrow & \delta(F)=l(F)-1 & \Longleftrightarrow & \delta_0(F)=0 & \Longleftrightarrow & F\in\mathcal{Y}_{ct}.
\end{aligned}
\end{equation*}
For the fibration~$f:X\rightarrow C$ we further define
$$\delta_0(f):=\sum_{t\in C}\delta_0(F_t) \text{ and } \delta_i(f):=\sum_{t\in C}\delta_i(F_t).$$
Note that
$$\delta_f=\delta_0(f)+\delta_1(f)+\sum_{i=1}^{\lfloor g/2\rfloor}\delta_i(f).$$
In terms of the moduli space of stable curves where~$\phi_C:C\hookrightarrow\M_g$ is the embedding induced by~$f:X\rightarrow C$ we get
$$\delta_0(f)=\deg(\phi_C^*(\delta_0)) \text{ and } \delta_i(f)=\deg(\phi_C^*(\delta_i)).$$

\subsection{The strict Arakelov inequality}
\label{sectionarakelov}
The Arakelov inequality was first proven in~\cite{arakelov} in terms of heights of sections. We consider the \emph{strict Arakelov inequality} for semi-stable fibrations as established by Sheng-Li Tan.
\begin{Thm}[{\cite[Theorem 2]{tanminnumber}}]
	\label{arakelovineq}
	Let~$f:X\to C$ be a semi-stable fibration of genus~$g\geq 2$. Then we have
	\begin{equation*}
	L_f=2\frac{\deg f_*\omega_f}{2g_C-2+s}<g.
	\end{equation*}
\end{Thm}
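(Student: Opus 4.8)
The plan is to derive the strict bound $L_f<g$ by combining two inequalities that are already available: the slope inequality of Theorem~\ref{slopeinequality} and the strict canonical class inequality $\omega_f^2<(2g-2)(2g_C-2+s)$ quoted in the introduction (recall $\omega_{X/C}=\omega_f$, so the two self-intersection numbers coincide). The point is that these two inputs pull on $\omega_f^2$ from opposite sides in a non-circular way: the slope inequality bounds $\omega_f^2$ \emph{from below} in terms of $\chi_f=\deg f_*\omega_f$, while the canonical class inequality bounds it \emph{from above} purely in terms of the combinatorial datum $2g_C-2+s$. Chaining them therefore controls $\chi_f$, hence $L_f$, with no appeal to the Hodge-theoretic machinery of Tan's original argument.

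Concretely, I would first note that every semi-stable fibration is relatively minimal and that we assume $f$ non-isotrivial, so Theorem~\ref{slopeinequality} applies and gives $\tfrac{4(g-1)}{g}\,\chi_f\leq\omega_f^2$, where $\chi_f>0$ since $f$ is not locally trivial. I would then invoke the strict canonical class inequality in the form
\begin{equation*}
\omega_f^2<(2g-2)(2g_C-2+s)=2(g-1)(2g_C-2+s).
\end{equation*}
Concatenating the two displays yields $\tfrac{4(g-1)}{g}\,\chi_f<2(g-1)(2g_C-2+s)$, and because $g\geq 2$ the factor $2(g-1)$ is strictly positive and may be cancelled, leaving $\tfrac{2}{g}\chi_f<2g_C-2+s$, that is $2\chi_f<g(2g_C-2+s)$. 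Finally, since the punctured base $C-\Delta$ is hyperbolic we have $2g_C-2+s>0$, so dividing by this quantity preserves the inequality and gives
\begin{equation*}
L_f=\frac{2\deg f_*\omega_f}{2g_C-2+s}<g,
\end{equation*}
as claimed.

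Since all the genuine analytic content is imported through the two cited inequalities, the main thing to check carefully is that their hypotheses hold, and in particular that the strictness (rather than merely $\leq$) in the canonical class bound is genuinely available — this is exactly what upgrades the classical Arakelov inequality $L_f\leq g$ to the strict statement. The only edge case deserving a word is the positivity of $2g_C-2+s$: it could a priori fail only for a smooth fibration ($s=0$) over a base of genus $g_C\leq 1$, but such a fibration cannot be non-isotrivial, so the hyperbolicity of $C-\Delta$ recorded in Section~\ref{sec:lyapunov} guarantees $2g_C-2+s>0$ in every relevant case. I expect no further obstacle beyond verifying these standing assumptions.
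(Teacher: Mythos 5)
Your argument is essentially the paper's own: the proof of Theorem~\ref{arakelovineq} given there likewise chains the slope inequality with Lemma~\ref{ineq} and lets $n\to\infty$, and the strict canonical class inequality of Remark~\ref{canonicalclassineq} that you invoke is nothing but that limit. The one place you diverge is the smooth case $s=0$, and there your proof has a real (if small) gap: the correction term $\tfrac{3r_f}{n^2}-\tfrac{(2g-2)s}{n}$ in Lemma~\ref{ineq} vanishes identically when $s=0$ (since then $r_f=0$), so that route only yields the non-strict bound $\omega_f^2\leq(2g-2)(2g_C-2)$ --- equivalent to non-strict Bogomolov--Miyaoka--Yau --- and the strictness you rely on is precisely the unverified hypothesis you flag; the paper closes this case separately by citing the much stronger bound $L_f\leq(g-1)/3$ of Proposition~\ref{surfacebundle}, which you would need to do as well.
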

Let us start with a semi-stable fibration~$f:X\rightarrow C$. It is a result of Arake\-lov~\cite{arakelov} and Beauville~\cite{beauville} that~$\omega_{X/C}$ is a nef divisor and all curves~$E$ in~$X$ such that~$\omega_{X/C}.E=0$ are the~$(-2)$-curves in the fibers. Contracting all~$(-2)$ curves defines a not necessary semi-stable, but rather a stable fibration~$f^{\#}:X^{\#}\rightarrow C$, since every fiber is stable. We also call~$f^{\#}$ the \emph{stable model of~$f$}. The surface~$X^{\#}$ might now be singular, i.e.\ contain singular points~$q$, where~$q$ is a rational double point of type~$A_{m_q}$ (see Proposition~\ref{propdoublepoints}). We know that~$m_q$ is exactly the number of $(-2)$-curves in~$X$ over~$q$. If~$q$ is a singular point on the smooth part of~$X^{\#}$, then we set~$m_q=0$.
\par
An, for us, important invariant is the rational number~$r_f$, which we define in terms of the stable model~$X^{\#}$ as
$$r_f:=\sum\limits_{q\in X^{\#}}\frac{1}{m_q+1}.$$
It is immediately obvious that~$r_f\leq\delta_f$.
\begin{Rem}
\label{boundrf}
Let~$f:X\rightarrow C$ be a semi-stable fibration of genus~$g$.
We show that~$r_f$ is bounded by~$r_f\leq (3g-3)s$, where~$s$ is the number of singular fibers. In fact, we have for each singular fiber~$F$ that
\begin{equation}
\label{nodesgenustwo}\delta(F)=g-g_{geo}(F)+l(F)-1
\end{equation}
by Lemma~\ref{Lemmasingfibers}. Additionally, the lemma states that the fiber has as geometric genus~$g_{geo}(F)=g$ if and only if~$F$ is of compact type. Note that equation~$\eqref{nodesgenustwo}$ still holds for the stable fiber~$F^{\#}$ after contracting all $(-2)$-curves as we subtract on both sides the same amount in this procedure, hence we have
$$\delta(F^{\#})=g-g_{geo}(F)+l(F^{\#})-1.$$
This implies that the number~$\delta(F^{\#})$ of singular points of the stable model is maximal for a geometric genus of~$g_{geo}(F)=0$. More concretely, as any rational component of~$F^{\#}$ must intersect the other components in at least three points, we have~$\delta(F^{\#})\leq 3g-3$ (see also~{\cite[page~51]{moduli}}). Let~$r(F)$ be the contribution of each singular fiber to~$r_f$, i.e.\ 
$$r_f=\sum\limits_{i=1}^{s}r(F)$$
By our definition~$r(F)\leq\delta(F^{\#})$. We therefore conclude trivially~$r_f\leq (3g-3)s$.
\end{Rem}
A rational double point~$q$ on~$X^{\#}$ is isomorphic to~$\CC^2/G_q$, where~$G_q$ is a finite subgroup of~$GL(2,\CC)$, with~$0$ as only fixed point. We call a singularity given by~$\CC^2/G_q$ a \emph{quotient singularity}, i.e.\ every rational double point is a quotient singularity. Let~$\widetilde{X}_q$ be the minimal resolution of this quotient singularity with the exceptional divisor~$E_q$. We put
$$\nu(q):=e(E_q)-\frac{1}{|G_q|},$$
where~$e(E_q)$ is the topological Euler characteristic. Miyaoka's inequality now states the following.

\begin{Thm}[{\cite[Corollary 1.3]{miyaoka}}]
\label{miyaoka}
	Let~$X^{\#}$ be a complex surface such that~$K_{X^{\#}}$ is nef and that~$X^{\#}$ has only rational double points as singularities. Let~$\sigma:X\rightarrow X^{\#}$ be the minimal resolution. Then
	$$3\sum_{q\in X^{\#}} \nu(q)\leq 3c_2(X)-c_1^2(X).$$
\end{Thm}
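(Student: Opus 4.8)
The statement is Miyaoka's orbifold refinement of the Bogomolov--Miyaoka--Yau inequality, so the plan is to \emph{derive} it from the orbifold inequality $\overline{c}_1^2(X^{\#})\leq 3\,\overline{c}_2(X^{\#})$ by translating the two orbifold Chern numbers into invariants of the minimal resolution $\sigma\colon X\to X^{\#}$ together with purely local correction terms. The genuine content is the orbifold inequality itself; the rest is bookkeeping with discrepancies and the topological Euler characteristic.

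First I would record that rational double points are canonical, Gorenstein singularities, so each $G_q$ lies in $\SL(2,\CC)$ and the exceptional divisor $E_q$ is a tree of $(-2)$-curves with all discrepancies equal to zero. Consequently $\sigma^{*}K_{X^{\#}}=K_X$, and squaring gives
\[
\overline{c}_1^2(X^{\#})=K_{X^{\#}}^2=(\sigma^{*}K_{X^{\#}})^2=K_X^2=c_1^2(X),
\]
so the first orbifold Chern number needs no correction at all. Next I would identify the second orbifold Chern number with an orbifold Euler characteristic. Writing $U=X^{\#}\setminus\{q_1,\dots\}$ for the smooth locus and using additivity of the topological Euler characteristic over the partition given by $\sigma$, one has $c_2(X)=e(X)=e(U)+\sum_q e(E_q)$, whereas the orbifold count replaces each singular point by $1/|G_q|$, that is $\overline{c}_2(X^{\#})=e(U)+\sum_q 1/|G_q|$. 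Subtracting and invoking the definition $\nu(q)=e(E_q)-1/|G_q|$ yields
\[
\overline{c}_2(X^{\#})=c_2(X)-\sum_{q\in X^{\#}}\nu(q).
\]
Substituting both computations into $\overline{c}_1^2\leq 3\,\overline{c}_2$ and rearranging produces exactly $3\sum_q\nu(q)\leq 3c_2(X)-c_1^2(X)$.

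The main obstacle is of course the orbifold inequality $\overline{c}_1^2\leq 3\,\overline{c}_2$ for $X^{\#}$ with quotient singularities and nef canonical class; this is the deep input and is not formal. I would obtain it either analytically, by solving the orbifold Monge--Amp\`ere equation to produce an orbifold K\"ahler--Einstein metric and then applying the pointwise Chern-form inequality as in Yau's proof, or algebraically in Miyaoka's spirit, via Bogomolov semistability of the orbifold cotangent sheaf combined with reduction to positive characteristic. A secondary point that needs care is that $K_{X^{\#}}$ is only assumed nef and not necessarily big, so the non--general-type boundary cases where $K_{X^{\#}}^2=0$ should be treated separately: there $c_1^2(X)=0$ while $\overline{c}_2\geq 0$, so the inequality degenerates to non-negativity of the orbifold Euler characteristic and follows without invoking the full BMY machinery.
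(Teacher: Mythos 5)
The paper does not prove this statement at all: it is imported verbatim as \cite[Corollary 1.3]{miyaoka} and used as a black box, so there is no internal proof to compare your argument against. Judged on its own terms, your reduction is the standard (and correct) way Miyaoka's Corollary 1.3 is deduced from his main theorem: for rational double points the discrepancies vanish, so $\sigma^{*}K_{X^{\#}}=K_{X}$ and $c_1^2(X)=K_{X^{\#}}^2$; additivity of the topological Euler characteristic over the smooth locus and the exceptional fibres gives $\overline{c}_2(X^{\#})=c_2(X)-\sum_{q}\nu(q)$ with $\nu(q)=e(E_q)-1/|G_q|$ exactly as the paper defines it; and substituting into $\overline{c}_1^2\leq 3\,\overline{c}_2$ rearranges to the claimed inequality. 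This bookkeeping is sound.

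The gap is that the orbifold inequality $\overline{c}_1^2(X^{\#})\leq 3\,\overline{c}_2(X^{\#})$ \emph{is} the content of Miyaoka's theorem; you have reformulated the statement rather than proved it, and the two strategies you name (orbifold K\"ahler--Einstein metrics, or Bogomolov semistability plus reduction mod $p$) are each substantial theorems that you do not carry out. Your treatment of the boundary case $K_{X^{\#}}^2=0$ is also too quick: non-negativity of the orbifold Euler characteristic $e(U)+\sum_q 1/|G_q|\geq 0$ for a surface with nef canonical class is not a formal consequence of $c_2(X)\geq 0$ and still requires an argument. Since the paper itself only ever invokes this result through the citation (to derive Corollary \ref{Mineq} and then Lemma \ref{ineq}), the appropriate resolution is simply to cite Miyaoka as the paper does, in which case your computation is a useful gloss on how Corollary 1.3 follows from his Theorem 1.1 but should not be presented as an independent proof.
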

Assume that on our surface~$X^{\#}$ a quotient singularity~$q$ is isomorphic to a rational double point of type~$A_{m_q}$, for some $m_q\geq 2$. Then we immediately get
$$\nu(q)=(m_q+1)-\frac{1}{m_q+1}.$$
We note that if we consider additionally the nodes of the smooth part in~$X^{\#}$, the inequality in Theorem~\ref{miyaoka} does not change. Therefore we can calculate the number of singular points in the singular fibers via
$$
	\delta_f=\sum_{q\in X^{\#}}(m_q+1).
$$
and we recall that
$$r_f=\sum_{q\in X^{\#}}\frac{1}{m_q+1},$$
which implies the following corollary.
\begin{Cor}
	\label{Mineq}
Let~$f:X\rightarrow C$ be a semi-stable fibration of genus~$g$, where the canonical divisor~$K_X$ is nef. Then we have
	$$3(\delta_f-r_f)\leq 3c_2(X)-c_1^2(X).$$
\end{Cor}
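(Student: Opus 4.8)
The plan is to apply Miyaoka's inequality (Theorem~\ref{miyaoka}) to the stable model $X^\#$ and then to recognise the resulting sum $\sum_q\nu(q)$ as being exactly $\delta_f-r_f$. Almost all of the geometric input has already been assembled in the discussion preceding the statement, so the proof amounts to checking the hypotheses of Theorem~\ref{miyaoka} and performing a short bookkeeping computation.

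First I would verify that Theorem~\ref{miyaoka} applies to $X^\#$. By construction $X^\#$ carries only rational double points of type $A_{m_q}$ as singularities, so the one thing left to check is that $K_{X^\#}$ is nef. Here the hypothesis that $K_X$ is nef enters: since rational double points are canonical singularities, the minimal resolution $\sigma\colon X\to X^\#$ is crepant, i.e.\ $K_X=\sigma^*K_{X^\#}$. Hence for any curve $D\subset X^\#$ the projection formula gives $K_{X^\#}\cdot D=\sigma^*K_{X^\#}\cdot\sigma^*D=K_X\cdot\sigma^*D\geq 0$, because $\sigma^*D$ is an effective $1$-cycle and $K_X$ is nef. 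Thus $K_{X^\#}$ is nef and Theorem~\ref{miyaoka} is available.

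Next I would establish the bookkeeping identity $\sum_q\nu(q)=\delta_f-r_f$. For a singularity $q$ of type $A_{m_q}$ we already have $\nu(q)=(m_q+1)-\tfrac{1}{m_q+1}$, and for a node lying on the smooth locus of $X^\#$ the convention $m_q=0$ makes the same expression vanish, $(m_q+1)-\tfrac{1}{m_q+1}=0$, which is precisely why adjoining these nodes does not alter Miyaoka's sum. Summing the expression over all nodes and splitting it using the two identities recorded just above the statement,
\begin{equation*}
\delta_f=\sum_{q\in X^\#}(m_q+1),\qquad r_f=\sum_{q\in X^\#}\frac{1}{m_q+1},
\end{equation*}
yields $\sum_q\nu(q)=\sum_q(m_q+1)-\sum_q\tfrac{1}{m_q+1}=\delta_f-r_f$.

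Finally, substituting this into Theorem~\ref{miyaoka} gives $3(\delta_f-r_f)=3\sum_q\nu(q)\leq 3c_2(X)-c_1^2(X)$, which is the assertion. There is no genuinely hard step: the only two points that deserve care are the transfer of nefness from $K_X$ to $K_{X^\#}$ across the crepant resolution, and the harmless enlargement of the index set to include the smooth-locus nodes (those with $m_q=0$), so that the range of summation for $\sum_q\nu(q)$ coincides with the one defining $\delta_f$ and $r_f$.
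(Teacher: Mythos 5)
Your proof is correct and follows the same route as the paper: apply Miyaoka's inequality to the stable model $X^{\#}$ and identify $\sum_q \nu(q)$ with $\delta_f - r_f$ via $\nu(q) = (m_q+1) - \tfrac{1}{m_q+1}$, with the smooth-locus nodes contributing zero. Your explicit check that nefness of $K_X$ descends to $K_{X^{\#}}$ through the crepant resolution is a detail the paper leaves implicit, but it is the intended justification and is carried out correctly.
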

Now we can prove an upper bound for the self-intersection~$\omega_f^2$.
\begin{Lemma}
	\label{ineq} Let~$f:X\to C$ be a semi-stable fibration of genus~$g$. Then for any~$n\in\NN$ satisfying $2g_C-2+\frac{n-1}{n}s\geq 0$, we have
\begin{equation*}
	\omega_{f}^2\leq(2g-2)(2g_C-2+s)+\frac{3 r_f}{n^2}-\frac{(2g-2)s}{n}.
\end{equation*}
\end{Lemma}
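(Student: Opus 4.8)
The plan is to reduce the general statement to the special case $g_C\ge 1$ (equivalently $n=1$), where Corollary~\ref{Mineq} applies directly, and to recover general $n$ by a base change that forces the new base to have genus at least one.

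For the base case I would argue as follows. When $2g_C-2\ge 0$ the divisor $K_X=\omega_f\otimes f^*\omega_C$ is nef, being the sum of the nef divisor $\omega_f$ (Arakelov--Beauville) and the pullback of the nef divisor $\omega_C$. Hence Corollary~\ref{Mineq} gives $3(\delta_f-r_f)\le 3c_2(X)-c_1^2(X)$. Substituting the expressions $c_1^2(X)=\omega_f^2+8(g_C-1)(g-1)$ and $c_2(X)=\delta_f+4(g_C-1)(g-1)$ recorded in Section~\ref{fibrations}, the terms $3\delta_f$ cancel and one is left with
\[
\omega_f^2\le 4(g_C-1)(g-1)+3r_f,
\]
which is exactly the asserted inequality for $n=1$.

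For general $n$ I would choose a connected degree-$n$ cover $\pi\colon C'\to C$ that is totally ramified over each of the $s$ points carrying a singular fibre and \'etale elsewhere, form the fibre product, and let $f'\colon X'\to C'$ be its semi-stable model. By Riemann--Hurwitz $2g_{C'}-2=n(2g_C-2)+s(n-1)$, so the hypothesis $2g_C-2+\tfrac{n-1}{n}s\ge 0$ is precisely the condition $g_{C'}\ge 1$, i.e.\ $K_{X'}$ nef; moreover $s'=s$. The transformation laws of the two remaining invariants are the technical heart: since rational double points are canonical the resolution is crepant and $\omega_{X'/C'}=\sigma^*\pi^*\omega_{X/C}$ up to $(-2)$-curves orthogonal to it, whence $\omega_{f'}^2=n\,\omega_f^2$; and locally an $A_{m_q}$-point $xy=t^{m_q+1}$ of the stable model pulls back under $t=u^n$ to the $A_{n(m_q+1)-1}$-point $xy=u^{n(m_q+1)}$, so that $m_q+1$ is replaced by $n(m_q+1)$ and therefore $r_{f'}=\tfrac1n r_f$.

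Applying the base case to $f'$ gives $\omega_{f'}^2\le 4(g_{C'}-1)(g-1)+3r_{f'}$. Inserting $\omega_{f'}^2=n\omega_f^2$, $r_{f'}=\tfrac1n r_f$ and $4(g_{C'}-1)=2(2g_{C'}-2)=2\big(n(2g_C-2)+s(n-1)\big)$, then dividing by $n$ and using $\tfrac{s(n-1)}{n}=s-\tfrac{s}{n}$, collapses the right-hand side to $(2g-2)(2g_C-2+s)+\tfrac{3r_f}{n^2}-\tfrac{(2g-2)s}{n}$, as claimed. I expect the main obstacle to be twofold: first, guaranteeing the existence of a cover ramified exactly over the singular values with full ramification (any extra ramification inflates $g_{C'}$ and weakens the bound, so it must be excluded; a Riemann-existence/monodromy argument is needed, and the hypothesis should be what makes it possible), and second, the careful local bookkeeping of $\omega_{f'}^2$ and $r_{f'}$ through semi-stable reduction, i.e.\ verifying that passing to the stable model commutes with base change and that the $A_k$-resolutions contribute nothing to $\omega_{f'}^2$.
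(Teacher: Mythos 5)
Your argument is essentially the paper's: reduce to the case where the canonical divisor is nef so that Corollary~\ref{Mineq} applies, then transport the resulting inequality $\omega_f^2\le (2g-2)(2g_C-2)+3r_f$ through a base change totally ramified of index~$n$ over the singular values, using $\omega_{f'}^2=n\,\omega_f^2$ and $r_{f'}=\tfrac1n r_f$. The algebra in your base case and in the final substitution is correct, and your local computation turning an $A_{m_q}$-point $xy=t^{m_q+1}$ into an $A_{n(m_q+1)-1}$-point is exactly the one used in the paper.

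The one genuine gap is the existence of the cover, which you flag but attribute to the wrong source. A connected degree-$n$ cover of $C$ totally ramified over exactly the $s$ singular values and \'etale elsewhere need not exist: Riemann--Hurwitz forces $2g_{C'}-2=n(2g_C-2)+s(n-1)$ to be even, so already for $s$ odd and $n$ even there is a parity obstruction (e.g.\ $s=5$, $n=2$), and the hypothesis $2g_C-2+\tfrac{n-1}{n}s\ge 0$ has nothing to do with this -- it only guarantees $g_{C'}\ge 1$ once the cover is in hand. The paper's fix is to take a cover of degree $d\cdot n$ in which each of the $s$ branch points has $d$ preimages, each of ramification index $n$; such covers exist for suitable $d$ (the parity condition becomes $sd(n-1)$ even, and the required monodromy, a product of $d$ disjoint $n$-cycles at each branch point, can be realized). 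Since then $\omega_{\widetilde f}^2=dn\,\omega_f^2$, $r_{\widetilde f}=\tfrac{d}{n}r_f$ and $2g_{\widetilde C}-2=dn\bigl(2g_C-2+\tfrac{n-1}{n}s\bigr)$, the auxiliary factor $d$ cancels from the final inequality. With this single modification your proof coincides with the paper's.
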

\begin{proof}
	Let~$\pi:\widetilde{C}\rightarrow C$ a base change of degree~$d\cdot n$ totally ramified over~$\Delta$ with ramification index~$n$. Let~$\widetilde{f}:\widetilde{X}\rightarrow\widetilde{C}$ be the pullback fibration of~$f$ with respect to~$\pi$, i.e.\ the smooth minimal model of the fiber product~$X\times_C \widetilde{C}\rightarrow \widetilde{C}$. By Riemann-Hurwitz, we know that
	$2g_{\widetilde{C}}-2=dn\cdot(2g_C-2+\frac{n-1}{n}s)$ and therefore by our assumption~$g_{\widetilde{C}}>0$. Hence~$\widetilde{X}$ is of general type and in particular the canonical divisor
	$$K_{\widetilde{X}}\sim K_{\widetilde{X}/\widetilde{C}}+K_{\widetilde{X}}|_{\widetilde{F}}$$
	is nef. Therefore using~Corollary~\ref{Mineq} we get
	$$\omega_{\widetilde{f}}\leq3c_2(\widetilde{X})-3\delta_{\widetilde{f}}+3r_{\widetilde{f}}-2(2g_{\widetilde{C}}-2)(2g-2),$$
	or equivalently
	$$\omega_{\widetilde{f}}\leq 3r_{\widetilde{f}}+(2g_{\widetilde{C}}-2)(2g-2).$$
	Note that for any node~$q$ in a singular fiber, the preimage~$\Pi^{-1}(q)$ consists of~$d$ disjoint curves of type~$A_{n-1}$, hence~$m_{\widetilde{q}}+1=n(m_q+1)$ and~$r_{\widetilde{f}}=\frac{d}{n}r_f$.
Since all fibrations are semi-stable, the base change of degree~$dn$ implies that~$\omega_{\widetilde{f}}^2=dn\cdot\omega_f^2$. So in total we get
$$\omega_f^2\leq(2g-2)(2g_C-2+\frac{n-1}{n}s)+\frac{3 r_f}{n^2}.$$
and therefore Lemma~\ref{ineq}.
\end{proof}
\begin{Rem}
\label{bemn>1}
The condition $2g_C-2+\frac{n-1}{n}s_{nc}\geq 0$ is in fact always satisfied for~$n>1$, if~$s_{nc}>0$. In the case~$g_C=0$ we know by~\cite{minnumber} that $s_{nc}\geq4$.
\end{Rem}

\begin{Rem}
	\label{canonicalclassineq}
	From Lemma~\ref{ineq} we can directly deduce the strict \emph{canonical class inequality}
	$$\omega_f^2 < (2g-2)(2g_C-2+s).$$
\end{Rem}
 We therefore have found a proof of the strict Arakelov inequality.
\begin{proof}[Proof of Theorem~\ref{arakelovineq}]
	We use Lemma~\ref{ineq}, which we combine additionally with the slope inequality of Theorem~\ref{slopeinequality}. Then we have
	$$\frac{4(g-1)}{g}\chi_f\leq(2g-2)(2g_C-2+s)+\frac{3 r_f}{n^2}-\frac{(2g-2)s}{n}$$
	or rearranged
	$$2\chi_f\leq g(2g_C-2+s)+\frac{g}{2g-2}\left(\frac{3 r_f}{n^2}-\frac{(2g-2)s}{n}\right).$$
Since
$$\lim\limits_{n\rightarrow\infty}\left(\frac{3 r_f}{n^2}-\frac{(2g-2)s}{n}\right)<0,$$
we have proven the strict inequality for~$s>0$. If~$s=0$ than we will see in Proposition~\ref{surfacebundle} that a much stronger inequality holds.
\end{proof}
\begin{Rem}
\label{minnummbersingfib}
For a semi-stable fibration~$f:X\rightarrow\P^1$ over the projective line, we conclude directly that there are always at least five singular fibers which was proven first in~\cite{tanminnumber}. Since~$g_{\P^1}=0$, we have
$$\chi_f=\chi(\mathcal{O}_X)+(g-1)\geq g.$$
But by the strict Arakelov inequality we know that~$2\chi_f< g(-2+s)$ and therefore~$s\geq 5$. This bound is in fact sharp, in Example~\ref{bspbeauville} we will get in touch with a~$g=3$ fibration with five singular fibers.
\end{Rem}

\subsection{Bounds of the speed}
\label{improvements}
After having introduced the strict Arakelov inequality~$L_f<g$ for semi-stable fibrations in Section~\ref{sectionarakelov}, which yields an upper bound for the speed, we ask the question if this bound is the best possible.
\subsubsection{Kodaira fibrations}
Let us consider a smooth fibration~$f:X\to C$, i.e.\ a fibration with no singular fibers. They were originally constructed by Kodaira (see~\cite{kodairaelliptic1} and~\cite{kodaira}) as examples to show that the signature of the surface is not multiplicative in the fiber, a result that was somewhat surprising, since the multiplicity holds for the topological Euler characteristic. In his honor they are nowadays called \emph{Kodaira fibration}. In the literature one can also find the name \emph{surface bundle}. We know that a Kodaira fibration is iso-trivial if the fiber genus is~$g\leq 2$. Indeed, since~$\mathcal{M}_2$ is affine (see~\cite{igusa}), any embedding of a complete curve in~$\mathcal{M}_2$ must be constant. Additionally the base genus has to be~$g_C\geq 2$ since we have by~{\cite[Proposition III.11.4]{compactcomplex}} that
$$c_2(X)=4(g-1)(g_C-1)>0.$$
For Kodaira fibrations we can show a stronger upper bound than the Arakelov inequality. This result should be known to experts, but since we are not aware of any reference we prove it here.

\propsurfacebundle*
\begin{proof}
Using the multiplicity of the Euler characteristic we get
\begin{equation*}
\chi_f=\chi(\mathcal{O}_X)-\frac{1}{4}e(F)e(C)=\chi(\mathcal{O}_X)-\frac{1}{4}c_2(X)=\frac{1}{12}(c_1^2(X)-2c_2(X)).
\end{equation*}
As~$X$ is of general type the Bogomolov-Miyaoka-Yau inequality~$c_1^2(X)\leq 3c_2(X)$ therefore defines an upper bound
$$\chi_f\leq\frac{1}{12}c_2(X)=\frac{1}{12}(2g-2)(2g_C-2)=\frac{g-1}{6}(2g_C-2)$$
and the claim follows.
\end{proof}

\subsubsection{Fibrations of low base genus}
The strictness of the Arakelov inequality or more precisely the strictness of the canonical class inequality yields an upper bound for the speed of a semi-stable fibration, if the quotient
$$\frac{s}{2g_C-2+s}$$
is bounded from below. Of course this is the case if the genus of the base curve~$C$ is low.

\lowbasegen*
\begin{proof}
	By Remark~\ref{boundrf} we get that
	$$r_f=\sum_{q\in X^{\#}}\frac{1}{n_q+1}\leq (3g-3)s.$$
	Now we will use Lemma~\ref{ineq}, which tells us that
	$$\omega_f^2\leq 2(2g_C-2+s)-\frac{2n-9}{n^2}\cdot s(g-1).$$
	Since~$4(g-1)\chi_f\leq g\cdot\omega_f^2$ by the slope inequality of Theorem~\ref{slopeinequality}, we get in terms of the speed
	$$L_f\leq g-\frac{2n-9}{n^2}\cdot\frac{g}{2}\cdot\frac{s}{2g_C-2+s}\leq g-\frac{2n-9}{n^2}\cdot\frac{g}{2}\cdot\frac{1}{m},$$
	for every~$n>1$ (see also Remark~\ref{bemn>1}). The minimum is attained for~$n=9$ and hence the claim follows.
\end{proof}
For fibrations over the projective line we can easily get an upper bound for the speed that depends on the number of singular fibers.
\begin{Lemma}
\label{lemmasklein}
Let~$f:X\rightarrow \P^1$ be a semi-stable genus~$g$ fibration over the projective line with~$s$ singular fibers. Let~$g$ or~$s$ be even. Then the speed~$L_f$ is bounded by
$$L_f\leq g-\frac{2}{s-2}.$$
\end{Lemma}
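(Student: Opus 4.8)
The plan is to sharpen the strict Arakelov inequality of Theorem~\ref{arakelovineq} by a parity argument. Since the base is~$\P^1$ we have~$g_C=0$, so the speed takes the form~$L_f=\frac{2\chi_f}{s-2}$, where as before~$\chi_f=\deg f_*\omega_f$. By Remark~\ref{minnummbersingfib} we know~$s\geq 5$, so in particular the denominator~$s-2$ is a positive integer, and the inequality we want to prove is equivalent to the integer estimate~$2\chi_f\leq g(s-2)-2$.

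First I would invoke Theorem~\ref{arakelovineq}, which with~$g_C=0$ reads~$2\chi_f<g(s-2)$; the crucial feature is that this is a \emph{strict} inequality. The key observation is then that both sides are integers of controlled parity. Indeed~$\chi_f$ is the degree of the Hodge bundle~$f_*\omega_f$, hence an integer, so~$2\chi_f$ is even. On the other hand, the hypothesis that~$g$ or~$s$ is even forces~$g(s-2)$ to be even as well: if~$g$ is even this is immediate, and if~$s$ is even then~$s-2$ is even. Thus~$2\chi_f$ and~$g(s-2)$ are two even integers satisfying a strict inequality, so they must differ by at least~$2$, which gives~$2\chi_f\leq g(s-2)-2$.

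Dividing by the positive integer~$s-2$ then yields~$L_f=\frac{2\chi_f}{s-2}\leq g-\frac{2}{s-2}$, the claimed bound. The only genuinely substantive input is the strictness in Theorem~\ref{arakelovineq}; everything else is the elementary remark that a strict inequality between two even integers can be improved by a full gap of~$2$. I do not anticipate any real obstacle, since the parity assumption is precisely what guarantees that~$g(s-2)$ shares the parity of~$2\chi_f$: without it one could only deduce a gap of at least~$1$, hence merely~$L_f\leq g-\frac{1}{s-2}$, which is too weak for the stated estimate.
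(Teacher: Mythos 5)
Your proof is correct and is essentially the paper's own argument: both rest on the strict Arakelov inequality plus the observation that, under the parity hypothesis, $g(s-2)$ is an integer of the same parity as $2\chi_f$, so the strict inequality can be improved by a full gap of $2$ before dividing by $s-2$. No meaningful difference in approach.
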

\begin{proof}
Using the strict Arakelov inequality (Theorem~\ref{arakelovineq}) we can immediately deduce that~$\chi_f<g(s-2)/2=(gs)/2-1$. But since~$\chi_f$ is by definition the degree of a vector bundle on~$\P^1$ and therefore a natural number and since~$gs$ is even, we have in fact~$\chi_f\leq gs/2-2$, which computes the upper bound of the speed.
\end{proof}
\begin{Rem}
In particular, Lemma~\ref{lemmasklein} states that
$$L_f\leq \frac{4}{3} \text{ for~$s=5$ and } L_f\leq \frac{3}{2} \text{ for } s=6.$$
We recall that~$s\leq 4$ is not possible by Remark~\ref{minnummbersingfib}.
\end{Rem}

\subsubsection{Non-hyperelliptic fibrations}
Let~$f:X\to C$ be a relatively minimal fibration such that the general fiber~$F$ is non-hyperelliptic. We already discussed that the slope of such a fibration does not reach the lower slope bound and hence, not surprisingly, we can find an upper bound for the speed for small fiber genus. Moreover low slope and high speed are in fact related.

\speednonhyp*
\begin{proof}
The claim follows from the lower slope bounds for non-hyperelliptic fibrations in~\eqref{lowboundsnonhyp} and Corollary~\ref{corlowboundnonhyp} as well as from the strict canonical class inequality (see Remark~\ref{canonicalclassineq}).
\end{proof}

\section{Hyperelliptic fibrations}
\label{sechypfib}
A \emph{hyperelliptic fibration} is a fibration, whose general fiber is a hyperelliptic curve of genus~$g$, i.e.\ there is a hyperelliptic involution~$\sigma$ acting on each fiber. In this section we introduce some techniques to construct examples of hyperelliptic fibrations.
\par
An involution with no isolated fixed points on a surface can equivalently be viewed as a branched double cover on a ruled surface. Following this approach, we will construct a genus~$g$ datum for a hyperelliptic fibration. Working with this detour over ruled surfaces is going back to the fundamental work of Horikawa (see~\cite{horikawa} and also~\cite{persson}). Xiao~\cite{surffibr} developed his techniques much further. For this approach we will introduce an even divisor on a ruled surface, which defines a double cover that determines the hyperelliptic fibration. Such a divisor can contain singularities that are resolved by an even blow-up, which ensures that the strict transform is again an even divisor and defines therefore a double cover. Knowing the singularities and the bidegree of the divisor is enough information to calculate the slope of the fibration.

\subsection{Ruled surfaces}
\label{chapterruledsurf}
Let us consider a smooth ruled surface~$P$ over a smooth curve~$C$, which means that we have a fibration~$\varphi:P\rightarrow C$ where the fibers are just~$\P^1$'s. We will follow here the notation of~\cite{hartshorne} and always remember that by mentioning~$P$, the morphism~$\varphi$ and the base~$C$ are part of our datum. Any ruled surface is given by a rank~$2$ vector bundle~$\mathcal{E}$ on~$C$, such that~$P=\P(\mathcal{E})$. But as the vector bundle~$\mathcal{E}$ is only unique up to tensoring with a line bundle, we want our vector bundle to be of a 'minimal' degree. That is we call a vector bundle \emph{normalized}, if it has a nonzero section, but~$\mathcal{E}\otimes\mathcal{L}$ has none for every line bundle~$\mathcal{L}$ of negative degree. So let~$\mathcal{E}$ always be a normalized vector bundle with~$P=\P(\mathcal{E})$. The integer~$e:=-\deg\mathcal{E}$ is then an invariant of the ruled surface~$P$ (see {\cite[Proposition V.2.8]{hartshorne}}). The nonzero section of~$\mathcal{E}$ gives rise to a horizontal curve~$C_0$ on~$P$ with~$C_0^2=-e$. Since~$\mathcal{E}$ is normalized, the number~$-e$ is the smallest self-intersection number of any horizontal curve. Now~$C_0$ and~$\Gamma$, where~$\Gamma$ is a general fiber of~$\varphi$, generate the N{\'e}ron-Severi group of the ruled surface~$P$ (see {\cite[Proposition V.2.3]{hartshorne}}). So as any divisor~$D$ on~$P$ is linear equivalent to~$aC_0+b\Gamma$, we will also say that~$D$ has \emph{bidegree}~$(a,b)$. For instance, the canonical divisor of~$P$ has bidegree~$(-2,2g_C-2-e)$ {\cite[Corollary V.2.11]{hartshorne}} and therefore we can compute~$K_P^2=8(1-g_C)$.
\par
Let~$D$ be a divisor on the ruled surface~$P$ given locally at a point~$p=(y_0,z_0)\in D$ by a function~$f(y_0,z_0)=0$, where
$$f(y,z)=\sum_{j,k\geq 0}a_{jk}y^j z^k.$$
Then we define the \emph{multiplicity of~$p$ in~$D$} as
$$\rm{mult}_p(D):=m_p=\inf_{j,k}\{j+k|a_{jk}\neq 0\}.$$
It is clear from this definition that~$m_p\geq 1$. In fact, if~$m_p=1$, then the point~$p$ is smooth on~$D$.

\subsection{Genus g datum}

Our interest in ruled surfaces arises from the hyperelliptic involution of a hyperelliptic genus~$g$ fibration~$f:X\rightarrow C$. In fact, after blowing up a finite number of points, every hyperelliptic fibration factors through a ruled surface~$P$ by a double cover. The aim of this section is to define a datum on a ruled surfaces which determines a hyperelliptic fibration. Then the slope of this fibration can be calculated from information arising from the datum.
\par
Let~$\mathcal{L}$ be an invertible sheaf and~$R$ an effective, reduced divisor on a minimal, ruled surface~$\varphi:P\rightarrow C$ such that~$\mathcal{O}_P(R)\cong\mathcal{L}^{\otimes 2}$. We call the tuple~$(R,\mathcal{L})$ a \emph{double cover datum} on~$P$. Let~$s\in H^0(P,\mathcal{L}^{\otimes 2})$ then the dual~$s^{\vee}$ defines an $\mathcal{O}_P$-algebra structure on
$$\mathcal{A}:=\mathcal{O}_P\oplus\mathcal{L}^{-1}.$$
Setting~$X={\rm Spec}(\mathcal{A})$ induces now a double cover~$\theta:X\rightarrow P$. The surface~$X$ might have singular points arising over singular points of~$R$. In fact we have~$X$ smooth if and only if~$R$ is smooth. For more information we refer to~\cite{prootcover}. Note that the reduced divisor~$R$, called \emph{branch divisor},  must be even, where we recall that a divisor on~$P$ is called \emph{even}, if it is divisible by two in the N{\'e}ron-Severi group of~$P$.
\par
We consider a double cover datum~$(R,\mathcal{L})$ on~$P$, where the branch divisor~$R$ intersects a general fiber in~$2g+2$ points. If~$R$ is smooth, then we can directly construct the double cover~$\theta:X\rightarrow P$ branched over~$R$. As~$P$ is minimal, there are no curves whose preimage is a~$(-1)$-curve in~$X$. Hence the composition of~$\theta$ with the ruling of~$P$, i.e.~$\varphi\circ\theta:X\rightarrow P$ is then a relatively minimal hyperelliptic fibration of genus~$g$.
\par
So let us in contrary assume that~$R$ is not smooth. Then there is a unique resolution $\widetilde{\psi}:\widetilde{P}\rightarrow P$ of the singularities, which we construct in a 'even' way. More concretely we want to produce a double cover datum~$(\widetilde{R},\mathcal{L})$ on~$\widetilde{P}$.
\begin{Defi}
\label{minresolution}
We call a sequence of blow-ups
\begin{equation*}
\widetilde{\psi}=\psi_1\circ...\circ\psi_r: (\widetilde{P},\widetilde{R})=(P_r,R_r)\geht^{\psi_r}...\geht^{\psi_1}(P_0,R_0)=(P,R) 
\end{equation*}
an \emph{even resolution of~$R$}, if~$\widetilde{R}$ is a smooth reduced even divisor and~$R_i$ is the reduced even inverse image of~$R_{i-1}$ under~$\psi_i$, i.e.\
$$R_i=\psi_i^*R_{i-1}-2\left\lfloor\dfrac{m_{p_i}}{2}\right\rfloor E_i.$$
We will call~$\widetilde{\psi}$ a \emph{minimal even resolution of~$R$}, if for any even resolution~$\widetilde{\psi}':\widetilde{P}'\rightarrow P$ of~$R$ there exists an morphism~$\widetilde{\Psi}:\widetilde{P}'\rightarrow\widetilde{P}$ such that~$\widetilde{\psi}'=\widetilde{\psi}\circ\widetilde{\Psi}$ and~$\widetilde{\Psi}(\widetilde{R}')=\widetilde{R}$.
\end{Defi}
Note that a minimal even resolution of~$R$ is by our definition unique up to isomorphisms of~$\widetilde{P}$. Indeed, if there is another minimal even resolution~$\widetilde{P}'\to P$ there exists~$\widetilde{\Psi}:\widetilde{P}'\rightarrow\widetilde{P}$ and~~$\widetilde{\Psi}':\widetilde{P}\rightarrow\widetilde{P}'$ such that~$\widetilde{\Psi}\circ\widetilde{\Psi}'=\rm{id}_P$ and~$\widetilde{\Psi}'\circ\widetilde{\Psi}=\rm{id}_{P'}$, hence~$P\cong P'$.
\begin{Prop}[\cite{Enokizono}]
Let~$(R,\mathcal{L})$ be a double cover datum on the minimal surface~$P$. Then there exists a minimal even resolution of~$R$, denoted by~$\widetilde{\psi}:\widetilde{P}\rightarrow P$. Moreover there exist~$\widetilde{\mathcal{L}}\in{\rm Pic}(\widetilde{P})$ such that~$\widetilde{\mathcal{L}}^{\otimes 2}\cong\mathcal{O}_{\widetilde{P}}(\widetilde{R})$.
\end{Prop}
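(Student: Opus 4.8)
The plan is to produce $\widetilde\psi$ as an explicit finite chain of point blow-ups, carrying a square root of the even transform along at each stage, and then to read off minimality from the fact that only genuinely singular points are ever blown up. Concretely, I set $(P_0,R_0)=(P,R)$ and $\mathcal{L}_0=\mathcal{L}$; as long as $R_i$ is singular I choose a point $p_i$ with $m_{p_i}\geq 2$, let $\psi_{i+1}\colon P_{i+1}\to P_i$ be the blow-up at $p_i$ with exceptional curve $E_{i+1}$, and put $R_{i+1}=\psi_{i+1}^{*}R_i-2\lfloor m_{p_i}/2\rfloor E_{i+1}$. Writing $\psi_{i+1}^{*}R_i=R_i^{\mathrm{st}}+m_{p_i}E_{i+1}$ for the strict transform, one gets $R_{i+1}=R_i^{\mathrm{st}}$ when $m_{p_i}$ is even and $R_{i+1}=R_i^{\mathrm{st}}+E_{i+1}$ when $m_{p_i}$ is odd; in both cases $R_{i+1}$ is reduced and even.

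\emph{Termination} is the crux: I must show the chain stops, i.e.\ that $R_r=\widetilde R$ is smooth for some $r$. For a blow-up at a point of even multiplicity this is exactly the classical embedded resolution of the reduced curve $R$, the total $\delta$-invariant of the strict transform dropping by $\binom{m_{p_i}}{2}\geq 1$. The genuinely new phenomenon is the odd case, where appending $E_{i+1}$ to the branch locus produces, by $E_{i+1}\cdot R_i^{\mathrm{st}}=m_{p_i}$, finitely many new singular points of $R_{i+1}$ where $E_{i+1}$ meets $R_i^{\mathrm{st}}$. Once the strict transform has been made smooth and transverse to the exceptional locus by ordinary embedded resolution, each such point is an ordinary node, removed by a single further blow-up of multiplicity $2$, which separates $E_{i+1}$ from the strict transform without creating new crossings between branches. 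Hence only finitely many additional blow-ups beyond the ordinary embedded resolution are needed; equivalently, one may invoke Horikawa's canonical resolution~\cite{horikawa} of the double cover $\theta\colon X\to P$, of which the even resolution is precisely the branch-locus shadow.

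For the \emph{square root} I define inductively $\mathcal{L}_{i+1}:=\psi_{i+1}^{*}\mathcal{L}_i\otimes\mathcal{O}_{P_{i+1}}(-\lfloor m_{p_i}/2\rfloor E_{i+1})$. Squaring and using $\mathcal{L}_i^{\otimes 2}\cong\mathcal{O}_{P_i}(R_i)$ gives
$$\mathcal{L}_{i+1}^{\otimes 2}\cong\psi_{i+1}^{*}\mathcal{O}_{P_i}(R_i)\otimes\mathcal{O}_{P_{i+1}}(-2\lfloor m_{p_i}/2\rfloor E_{i+1})\cong\mathcal{O}_{P_{i+1}}(R_{i+1}),$$
which is exactly why the even transform is built with the factor $2\lfloor m_{p_i}/2\rfloor$. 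The base case is the defining isomorphism $\mathcal{O}_P(R)\cong\mathcal{L}^{\otimes 2}$ of the datum, so setting $\widetilde{\mathcal{L}}:=\mathcal{L}_r\in{\rm Pic}(\widetilde P)$ at the final stage yields $\widetilde{\mathcal{L}}^{\otimes 2}\cong\mathcal{O}_{\widetilde P}(\widetilde R)$.

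Finally, for \emph{minimality}, because we blow up only points where $R_i$ is genuinely singular, every centre in the chain is forced: any even resolution must reduce these same multiplicities, hence must blow up each $p_i$ or an infinitely near point over it. Consequently any other even resolution $\widetilde\psi'\colon\widetilde P'\to P$ dominates ours, i.e.\ factors as a sequence of further blow-ups of $\widetilde P$; contracting these extra exceptional curves produces the required $\widetilde\Psi\colon\widetilde P'\to\widetilde P$ with $\widetilde\psi'=\widetilde\psi\circ\widetilde\Psi$, and a local computation at each centre shows $\widetilde\Psi(\widetilde R')=\widetilde R$. I expect the odd-case termination to be the main obstacle, since adding the exceptional divisor to the branch locus is exactly the step that can manufacture new singularities and must be shown not to cascade indefinitely.
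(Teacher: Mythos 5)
Your proposal is correct and follows essentially the same route as the paper: the same inductive blow-up of singular points with the reduced even inverse image $R_{i+1}=\psi_{i+1}^{*}R_i-2\lfloor m_{p_i}/2\rfloor E_{i+1}$, the same induced square root $\mathcal{L}_{i+1}=\psi_{i+1}^{*}\mathcal{L}_i\otimes\mathcal{O}(-\lfloor m_{p_i}/2\rfloor E_{i+1})$, and the same justification of minimality from the fact that only genuinely singular centres are blown up. You additionally spell out the termination of the process (the odd-multiplicity case producing only nodes, which resolve in one step) and the explicit line-bundle computation, both of which the paper leaves implicit.
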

\begin{proof}
We are going to construct such a minimal even resolution directly. For this purpose we let~$\psi_1:P_1\to P$ be a blow up of a singular point~$p_1\in R_0:=R$ and we set~$\mathcal{L}_0:=\mathcal{L}$. We define the divisor~$R_1$ on~$P_1$ to be
$$R_1:=\psi_i^*R_0-2\left\lfloor\dfrac{m_{p_1}}{2}\right\rfloor E_1.$$
Here~$E_1$ is the exceptional divisor of the blow-up of~$p_1$. Note that~$R_1$ is the reduced even inverse image of~$R$. Now, on~$P_1$, if there is a singular point~$p_2\in R_1$ we blow up~$p_2$ again by~$\psi_2:P_2\rightarrow P_1$ and continue this procedure inductively until the divisor~$\widetilde{R}:=R_r$ on~$\widetilde{P}:=P_r$ is smooth.
In each step we blow up the singular point~$p_i$ on~$R_{i-1}\subseteq P_{i-1}$ and set
$$\widetilde{\psi}:=\psi_1\circ\dots\circ\psi_r.$$
We define again as reduced even inverse image of~$R_{i-1}$ the divisor~$R_i$ on~$P_i$ where
$$
R_i:=\psi_i^*R_{i-1}-2\left\lfloor\dfrac{m_{p_i}}{2}\right\rfloor E_i
$$
for~$i=1,\dots,r$. Here, consistently, we denote by~$E_i$ the exceptional divisor of the blow-up of~$p_i$. This even resolution is minimal, since each~$\psi_i$ is a blow up centered at a singular point of~$R_{i-1}$. It is easy to see that for every~$i$ there exists a linebundle~$\mathcal{L}_i\in{\rm Pic}(P_i)$ such that~$\mathcal{L}_i^{\otimes 2}\cong\mathcal{O}_{P_i}(R_i)$. In conclusion we finish the proof by setting~$\widetilde{\mathcal{L}}:=\mathcal{L}_r$.
\end{proof}

We finally have a unique double cover datum given by~$(\widetilde{R},\widetilde{\mathcal{L}})$ on the ruled surface $\widetilde{\varphi}:\widetilde{P}\rightarrow P$ such that~$(\widetilde{\psi})_*(\widetilde{R})=R$. The datum defines a double cover whose minimal model is a hyperelliptic genus~$g$ fibration. This allows us to define a triple~$(P,R,\mathcal{L})$ which yields a hyperelliptic fibration~$f:X\rightarrow C$. We say that a singularity on~$R$ is a \emph{simple singularity}, if it is either a double point or a triple point that becomes at most a double point after evenly blowing it up.
\begin{Defi}[\cite{piell}]
\label{defgenusgdatum}
Let~$\varphi:P\rightarrow C$ be a ruled surface over a smooth curve~$C$ and let~$(R,\mathcal{L})$ be a double cover datum on~$P$. Let~$e$ be the minimal self-intersection of a horizontal curve on~$P$ and~$n\in\Z$. Then we call~$(P,R,\mathcal{L})$ a \emph{genus~$g$ datum}, if
\begin{itemize}
\item[i)] we have~$R.\Gamma=2g+2$ for~$\Gamma$ a general fiber of~$\varphi$,
\item[ii)]  there exists an even blow-up~$\hat{P}\rightarrow P$ such that the reduced even inverse image~$\hat{R}$ of~$R$ has only simple singularities.
\end{itemize}
Furthermore we take~$n\in\Z$ to be the number such that the branch divisor~$R$ has as bidegree~$(2g+2,(g+1)e+n)$ in~$P$.
\end{Defi}
The divisor~$R$ is a branch divisor, hence must be even or in other words divisible by two in the N{\'e}ron-Severi group of~$P$. This implies that~$(g+1)e+n$ must be even.
\begin{Rem}
\label{boundsofe}
The number~$e$ of the surface~$P$ is bounded by~$e\leq n/(g+1)$.
Let us consider a genus~$g$ datum~$(P,R,\mathcal{L})$. The ruled surface comes with the base genus~$g_C$ and the invariant~$e$, which is the negative self-intersection number of~$C_0$. Now~$n\in\Z$ is the number such that~$R$ has bidegree~$(2g+2,(g+1)e+n)$, hence~$(g-1)e\equiv n \mod 2$. If the section~$C_0$ of the ruled surface does not belong to~$R$ then it is intersecting the branch divisor properly, therefore~$R.C_0=n-(g+1)e\geq 0$ and hence~$e\leq n/(g+1)$. Otherwise, since~$R$ is a branch divisor which has no multiple components, the curve~$C_0$ intersects~$R\smallsetminus C_0$ properly, i.e.\ $(R\smallsetminus C_0).C_0=n-ge\geq 0$, hence~$e\leq n/g$. Moreover, if~$e>n/(g+1)$, then this forces~$C_0$ to be a component of~$R$.
\end{Rem}

\subsection{Invariants}

So far we have seen that we only need to consider the blow-up of all non-negligible points to calculate the slope. Certain of these singularities correspond exactly to~$(-1)$-curves in the surface~$\widetilde{X}$ or in other words to isolated fixed points of the hyperelliptic involution of~$X$. Given a genus~$g$ datum~$(P,R,\mathcal{L})$, we always denote the minimal even resolution of~$R$ by~$\widetilde{\psi}:\widetilde{P}\rightarrow P$ and the associated double cover datum by~$(\widetilde{R},\widetilde{\mathcal{L}})$. This datum defines a double cover~$\widetilde{\theta}:\widetilde{X}\rightarrow\widetilde{P}$ and hence by composition a not necessarily relatively minimal fibration~$\widetilde{f}:\widetilde{X}\rightarrow C.$ Its minimal model is then a relatively minimal, hyperelliptic fibration~$f:X\rightarrow C$ as shown in Figure~\ref{tildef}.
We say that the fibration~$f:X\rightarrow C$ is associated to the genus~$g$ datum.
\begin{Lemma}[{\cite[page~183]{compactcomplex}}]
\label{lemminv}
Let~$\widetilde{f}:\widetilde{X}\rightarrow C$ be a hyperelliptic genus~$g$ fibration defined by a genus~$g$ datum~$(P,R,\mathcal{L})$, where the branch divisor~$R$ has bidegree~$(2g+2,(g+1)e+n)$. Let~$f:X\rightarrow C$ be its minimal model and~$m$ be the number of vertical~$(-1)$-curves in~$\widetilde{X}$. Then we have
\item[i)] $\omega_{f}^2=c_1^2(X)-8(g-1)(g_C-1)=(2g-2)n-2\sum\limits_{i=1}^{r}\left(k_i-1\right)^2-m$,
\item[ii)] $\chi_{f}=\chi(\mathcal{O}_{X})-(g-1)(g_C-1)=\frac{1}{2}gn-\frac{1}{2}\sum\limits_{i=1}^r k_i\left(k_i-1\right)$,
where~$k_i:=\left\lfloor\frac{m_{p_i}}{2}\right\rfloor$.
\end{Lemma}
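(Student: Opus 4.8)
The plan is to compute the invariants of the smooth double cover $\widetilde\theta\colon\widetilde X\to\widetilde P$ by the standard double-cover formulas and then descend to the relatively minimal model $f\colon X\to C$. Recall from \cite{compactcomplex} that for a smooth double cover $\pi\colon Y\to S$ whose branch divisor is linearly equivalent to $2M$ (so that $\pi_*\mathcal O_Y=\mathcal O_S\oplus M^{-1}$) one has $K_Y=\pi^*(K_S+M)$, whence
\[
c_1^2(Y)=2(K_S+M)^2,\qquad \chi(\mathcal O_Y)=2\chi(\mathcal O_S)+\tfrac12\,M\cdot(K_S+M).
\]
Applying this with $S=\widetilde P$, $Y=\widetilde X$ and $M=\widetilde{\mathcal L}$ reduces everything to intersection theory on $\widetilde P$.

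First I would record the intersection numbers on $P$ itself. Writing $\mathcal L$ for the class of bidegree $(g+1,\tfrac12((g+1)e+n))$, so that $R\equiv 2\mathcal L$, the relations $C_0^2=-e$, $C_0\cdot\Gamma=1$, $\Gamma^2=0$ and $K_P\equiv(-2,2g_C-2-e)$ give $\mathcal L^2=(g+1)n$ and $K_P\cdot\mathcal L=(g+1)(2g_C-2)-n$, together with $K_P^2=8(1-g_C)$ and $\chi(\mathcal O_P)=1-g_C$. Next I would transport these to the minimal even resolution $\widetilde\psi\colon\widetilde P\to P$. Using the total transforms $E_i$ of the exceptional curves, which satisfy $E_i^2=-1$, $E_i\cdot E_j=0$ and $E_i\cdot\widetilde\psi^*D=0$, together with $K_{\widetilde P}=\widetilde\psi^*K_P+\sum_iE_i$ and $\widetilde{\mathcal L}=\widetilde\psi^*\mathcal L-\sum_i k_iE_i$, I obtain
\[
\widetilde{\mathcal L}^2=(g+1)n-\textstyle\sum_ik_i^2,\qquad K_{\widetilde P}\cdot\widetilde{\mathcal L}=K_P\cdot\mathcal L+\textstyle\sum_ik_i,\qquad K_{\widetilde P}^2=8(1-g_C)-r,
\]
while $\chi(\mathcal O_{\widetilde P})=\chi(\mathcal O_P)=1-g_C$ since blow-ups preserve $\chi(\mathcal O)$.

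Substituting into the double-cover formulas and simplifying — the key cancellation being $K_P\cdot\mathcal L+\mathcal L^2=gn+(g+1)(2g_C-2)$, and the combination $-r+2\sum_ik_i-\sum_ik_i^2$ reorganizing as $-\sum_i(k_i-1)^2$ — gives $\chi(\mathcal O_{\widetilde X})=(g-1)(g_C-1)+\tfrac12 gn-\tfrac12\sum_ik_i(k_i-1)$ and $c_1^2(\widetilde X)=(2g-2)n+8(g-1)(g_C-1)-2\sum_i(k_i-1)^2$. Since $\chi(\mathcal O)$ is a birational invariant, $\chi(\mathcal O_X)=\chi(\mathcal O_{\widetilde X})$, and subtracting $(g-1)(g_C-1)$ yields part (ii) at once. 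For part (i) I would pass from the canonical resolution $\widetilde X$ to its relatively minimal model $X$ by contracting the $m$ vertical $(-1)$-curves; each such contraction leaves $\chi(\mathcal O)$ unchanged and alters $c_1^2$ by one, so that $c_1^2(X)=c_1^2(\widetilde X)\pm m$, and subtracting $8(g-1)(g_C-1)$ produces the asserted formula up to the $m$-correction.

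The main obstacle is precisely this last step: identifying the $m$ vertical $(-1)$-curves and pinning down their exact contribution, including the sign with which $m$ enters. These curves arise as the double-cover preimages of those exceptional curves $E_i$ of the even resolution that meet the smooth branch divisor $\widetilde R$ in few points, and one must verify both that they exhaust the vertical $(-1)$-curves of $\widetilde X$ and that their contraction accounts for exactly $m$ in $c_1^2$. A secondary point to check at the outset is the parity condition that $(g+1)e+n$ is even, noted after Definition~\ref{defgenusgdatum}, which guarantees that $\mathcal L$ and $\widetilde{\mathcal L}$ are genuine line bundles so that the double-cover formulas are applicable.
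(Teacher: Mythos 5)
The paper does not actually prove Lemma~\ref{lemminv}; it is quoted from the reference on compact complex surfaces, so there is no internal proof to compare against. Your route --- the double-cover formulas $c_1^2(\widetilde X)=2(K_{\widetilde P}+\widetilde{\mathcal L})^2$ and $\chi(\mathcal O_{\widetilde X})=2\chi(\mathcal O_{\widetilde P})+\tfrac12\widetilde{\mathcal L}(K_{\widetilde P}+\widetilde{\mathcal L})$, combined with the transformation rules $K_{\widetilde P}=\widetilde\psi^*K_P+\sum E_i$ and $\widetilde{\mathcal L}=\widetilde\psi^*\mathcal L-\sum k_iE_i$ under the even resolution --- is exactly the standard argument behind the citation, and your intersection numbers on $P$ and $\widetilde P$ are all correct. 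In particular your derivation of part (ii) is complete: $\chi(\mathcal O)$ is a birational invariant, so nothing changes in passing from $\widetilde X$ to $X$, and the cancellation you describe gives $\chi_f=\tfrac12 gn-\tfrac12\sum k_i(k_i-1)$ as claimed. It is also worth observing that the negligible singularities have $k_i=1$ and hence contribute nothing to either sum, which is consistent with the paper's terminology.

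The one gap is the step you yourself flag, and it can be closed more decisively than you suggest: the sign of the $m$-correction is not ambiguous. If $\rho\colon\widetilde X\to X$ contracts a $(-1)$-curve $E$, then $K_{\widetilde X}=\rho^*K_X+E$ gives $K_{\widetilde X}^2=K_X^2-1$, so \emph{each} contraction raises $c_1^2$ by one and $c_1^2(X)=c_1^2(\widetilde X)+m$ (granting that $m$ is the total number of contractions; strictly one should also note that contracting a vertical $(-1)$-curve can create a new one, so ``number of $(-1)$-curves in $\widetilde X$'' and ``number of blow-downs to the relatively minimal model'' need to be reconciled). Since your computation gives $c_1^2(\widetilde X)-8(g-1)(g_C-1)=(2g-2)n-2\sum(k_i-1)^2$, the corrected formula reads $\omega_f^2=(2g-2)n-2\sum(k_i-1)^2+m$, i.e.\ with the \emph{opposite} sign of $m$ from the statement as printed. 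You should not paper over this: either the statement carries a sign typo in the $m$-term or the source uses a different normalization of $m$; in every example in Section~\ref{hypexamples} and in Example~\ref{bspbeauville} one has $m=0$, so the discrepancy is invisible in the applications, but your proof as written cannot land on ``$-m$'' and you should say so explicitly rather than leave ``$\pm m$''.
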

\begin{Rem}
It is worth mentioning that our invariants of the fibration~$f$ do not depend on~$e$ anymore, since we defined~$n$ in a way that~$e$ vanishes in any intersection. So, for example, we have
\begin{eqnarray*}
K_{P}^2=8(1-g_C), \quad R^2=2(2g+2)n & \text{ and } \quad K_{P}.R=(2g+2)(2g_C-2)-2n.
\end{eqnarray*} 
\end{Rem}
\begin{figure}[!htbp]
\begin{center}
	\begin{tikzpicture}[every node/.style={anchor=center},>=stealth']
	\matrix (m) [matrix of math nodes, row sep=3em, column sep=3em]
	{\widetilde{X} & {} & \widetilde{P}\supseteq\widetilde{R}\\
	 & & \hat{P}\supseteq\hat{R}\\
	X & & P\supseteq R \\
		 & C &  \\ };
	\draw[->] (m-1-1) -- node[left] {$\rho$} (m-3-1);
	\draw[->] (m-1-1) -- node[right] {$\widetilde{f}$} (m-4-2);
	\draw[->] (m-1-1) -- node[above] {$\hat{\theta}$} (m-1-3);
	\draw[->] (m-3-1) -- node[left] {$f$} (m-4-2);
	\draw[->] (m-3-3) -- node[right] {$\varphi$} (m-4-2);
	\draw[->] (m-1-3) -- node[right] {$\psi'$} (m-2-3);
	\draw[->] (m-2-3) -- node[right] {$\hat{\psi}$} (m-3-3);
	\end{tikzpicture}
\end{center}
\caption{\label{tildef}The hyperelliptic fibration associated to a genus~$g$ datum.}
\end{figure}
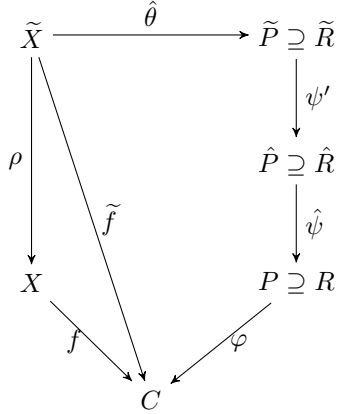

\subsubsection{Negligible singularities}
As just seen depend the invariants of~$f$ heavily on the multiplicity of singularities in~$R$. The idea of Xiao was now to cluster the singularities of~$R$ in two different types. Therefore, we consider all singularities that are living on the same exceptional components, i.e.\ if~$p_i\in P_{i-1}$ and~$p_j\in P_{j-1}$ for~$j<i$, such that~$\psi_j\circ...\circ\psi_{i-1}(p_i)=p_j$. We recall that this means that $p_i$ is infinitely near $p_j$.
\begin{Defi}
Let~$(P,R,\mathcal{L})$ be a genus~$g$ datum and let~$p\in R$ be a singularity of multiplicity~$m_p$. We call~$p$ a \emph{negligible singularity} if~$m_p\leq 3$ and for all~$p_i$ infinitely near~$p$ we have~$m_{p_i}\leq 3$. Consistently, we call a singularity that is not negligible a \emph{non-negligible singularity}.
\end{Defi}
Note that negligible singularities and simple singularities are the same. We can classify them by using so-called A-D-E types. The description negligible comes from the fact that a negligible singularity has no influence on the invariants of~$\widetilde{X}$ as we have just calculated in Lemma~\ref{lemminv}. 
\begin{Thm}[{\cite[Theorem II.8.1]{compactcomplex}}]
\label{ADE-sing}
Let~$p$ be a negligible singularity on the divisor~$R\subseteq P$. If~$m_p=2$ then the singularity is in~$R$ locally of the form
\begin{eqnarray*}
A_m: & y^2+z^{m+1}=0 & \text{where }m\geq 1.
\end{eqnarray*}
If~$m_p=3$ then the singularity is in~$R$ locally described by one of the following
\begin{eqnarray*}
D_m: & y^2z+z^{m-1}=0 & \text{where }m\geq 4,\\
E_6: & y^3+z^{4}=0, & \\
E_7: & y^3+yz^{3}=0, & \\
E_8: & y^3+z^{5}=0. & 
\end{eqnarray*}
\end{Thm}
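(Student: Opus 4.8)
The plan is to argue locally and analytically at $p$ over $\CC$, where binary forms split into linear factors and formal coordinate changes are freely available; concretely this is the classical classification of simple (Du~Val) singularities, which I would reprove in the form adapted to the even resolution of Definition~\ref{minresolution}. I would stratify the argument by the multiplicity $m_p\in\{2,3\}$ and, within each value, by the factorisation type of the tangent cone of a local equation $f(y,z)=0$ of $R$, producing a normal form in each case and checking it against the negligibility hypothesis.

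For $m_p=2$ the tangent cone is a binary quadratic, hence after a linear change either $yz$ or $y^2$. In the first case $R$ is an ordinary node, analytically $yz=0$, i.e.\ the type $A_1$. In the second case $f$ is regular of order two in $y$, so Weierstrass preparation gives $f=\mathrm{unit}\cdot\bigl(y^2+c_1(z)y+c_0(z)\bigr)$ with $c_1,c_0$ in the maximal ideal; completing the square $y\mapsto y-\tfrac12 c_1(z)$ removes the middle term and leaves $y^2+d(z)$, where reducedness of $R$ forces $d\not\equiv 0$. Over $\CC$ I absorb the unit in $d(z)=z^{m+1}\cdot\mathrm{unit}$ by rescaling $z$ and reach $y^2+z^{m+1}=0$, the type $A_m$. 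A direct computation from Lemma~\ref{lemminv} shows that each even blow-up replaces $A_m$ by $A_{m-2}$ while keeping the multiplicity equal to $2$, so every such point is negligible and no value $m\ge 1$ is excluded.

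For $m_p=3$ the tangent cone is a binary cubic, splitting over $\CC$ as three distinct lines, a double line plus a transverse line, or a triple line. Three distinct lines give the ordinary triple point $z(y^2+z^2)=y^2z+z^3=0$, the type $D_4$. If the tangent cone is $y^2z$, I normalise the transverse smooth branch to $\{z=0\}$ and apply the $A$-type reduction to the residual double point, obtaining $z(y^2+z^{m-2})=y^2z+z^{m-1}=0$, the type $D_m$ with $m\ge 5$; one checks from Lemma~\ref{lemminv} that the even resolution of such a point produces only infinitely near points of multiplicity $\le 3$, so these are negligible as well.

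The main obstacle, and the only place where negligibility is used essentially, is the triple-line case $f=y^3+(\text{terms of order}\ge 4)$, since here the multiplicity alone does not fix the analytic type. I would run the even resolution explicitly and read off at each blow-up the multiplicity of the reduced even inverse image from the Newton polygon of $f$; the decisive mechanism is that, because $3-2\lfloor 3/2\rfloor=1$, the exceptional divisor is added once, so its intersection with the strict transform can raise the multiplicity. A Tschirnhaus shift $y\mapsto y-\tfrac13 a(z)$ normalises $f$ to $y^3+B(z)y+C(z)$ with $\mathrm{ord}\,B\ge 3$ and $\mathrm{ord}\,C\ge 4$, and the requirement that every infinitely near point again have multiplicity $\le 3$ confines $(\mathrm{ord}\,B,\mathrm{ord}\,C)$ to exactly the three configurations $y^3+z^4$, $y^3+yz^3$, $y^3+z^5$; for instance $y^3+z^6$ is excluded because its reduced even inverse image, locally $z(u^3+z^3)=0$, has a point of multiplicity $4$. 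Collecting these normal forms yields $E_6$, $E_7$, $E_8$ and completes the classification.
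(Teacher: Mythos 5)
Your argument is essentially correct, but there is nothing in the paper to compare it against: Theorem~\ref{ADE-sing} is quoted verbatim from \cite[Theorem II.8.1]{compactcomplex} and no proof is given here, so what you have written is a self-contained re-derivation of the classical classification of simple plane curve singularities, adapted to the even-resolution bookkeeping of Definition~\ref{minresolution}. The overall structure is the standard one (Weierstrass preparation, case analysis on the factorisation of the tangent cone, explicit blow-ups), and you correctly isolate the one genuinely case-specific point: for odd multiplicity the reduced even inverse image is $\tilde R+E$, so the exceptional divisor re-enters the branch locus and can push the multiplicity of an infinitely near point above~$3$; your exclusion of $y^3+z^6$ via the multiplicity-$4$ point on $z(u^3+z^3)$ is exactly the computation that separates $E_6,E_7,E_8$ from the non-negligible triple points. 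Two points deserve tightening. First, your repeated appeals to Lemma~\ref{lemminv} are misplaced: that lemma computes $\omega_f^2$ and $\chi_f$ and says nothing about multiplicities of infinitely near points; the assertions that an even blow-up turns $A_m$ into $A_{m-2}$, and that $D_m$ resolves through points of multiplicity $\le 3$, are direct local computations and should be stated as such. Second, passing from the order data to the normal forms — e.g.\ from ``three pairwise transverse smooth branches'' to $y^2z+z^3$, or from $(\operatorname{ord}B,\operatorname{ord}C)=(\ge 3,4)$ to $y^3+z^4$ — silently uses that these germs are semi-quasi-homogeneous with nondegenerate principal part and hence right-equivalent to that principal part (finite determinacy). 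This is standard but is the actual content of the classification, and in the $E$-series you should record the three admissible configurations as conditions on $(\operatorname{ord}B,\operatorname{ord}C)$, namely $\operatorname{ord}C=4$; $\operatorname{ord}B=3$ with $\operatorname{ord}C\ge 5$; and $\operatorname{ord}B\ge 4$ with $\operatorname{ord}C=5$, before identifying them with $E_6$, $E_7$, $E_8$. With those repairs the argument is a complete proof; alternatively, you may simply cite \cite{compactcomplex} as the paper does.
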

Accordingly, we will speak of a \emph{(negligible) singularity of type~$A_m$, $D_m$, $E_6$, $E_7$ or~$E_8$}. Let us decompose the minimal even resolution~$\widetilde{\psi}$ into~$\widetilde{\psi}=\hat{\psi}\circ\psi'$, where~$\psi':\widetilde{P}\rightarrow\hat{P}$ is the blow-up of all negligible and~$\hat{\psi}:\hat{P}\rightarrow P$ the blow-up of all non-negligible singularities. We further denote by~$(\hat{R},\hat{\mathcal{L}})$ the reduced even image of~$(R,\mathcal{L})$ in~$\hat{P}$. Now the branch divisor~$\hat{R}$ is in a certain way 'smooth enough' to construct the double cover.
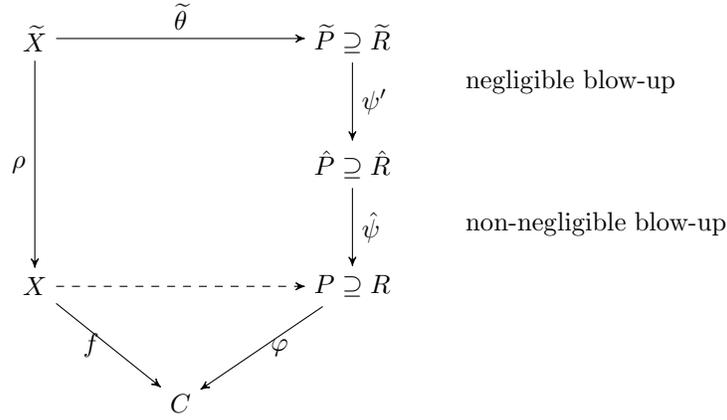
\begin{figure}[!htbp]
	\centering
	\begin{tikzpicture}[every node/.style={anchor=center},>=stealth']
	\matrix (m) [matrix of math nodes, row sep=3em, column sep=4em]
	{\widetilde{X} & {} & \widetilde{P}\supseteq\widetilde{R} & {} \\
	 & & \hat{P}\supseteq\hat{R} & {} \\
	X & & P\supseteq R & {} \\
		 & C &  {} \\ };
	\draw[->] (m-1-1) -- node[left] {$\rho$} (m-3-1);
	\draw[->] (m-1-1) -- node[above] {$\widetilde{\theta}$} (m-1-3);
	\draw[->] (m-3-1) -- node[left] {$f$} (m-4-2);
	\draw[->,dashed] (m-3-1) -- node[right] {} (m-3-3);
	\draw[->] (m-3-3) -- node[right] {$\varphi$} (m-4-2);
	\draw[->] (m-1-3) -- node[right] {$\psi'$} (m-2-3);
	\draw[->] (m-2-3) -- node[right] {$\hat{\psi}$} (m-3-3);
	\node[below right] at (2.5,2.1) {negligible blow-up};
	\node[below right] at (2.5,0.2) {non-negligible blow-up};
	\end{tikzpicture}
	\caption{\label{genusgdata2}Negligible and non-negligible blow-up.}
\end{figure}

\begin{Rem}
The pair~$(\hat{R},\hat{\mathcal{L}})$ is a double cover datum on~$\hat{P}$. It therefore defines a double cover~$\hat{\theta}:\hat{X}\rightarrow\hat{P}$, where the surface~$\hat{X}$ might be singular. Singularities arise over the singular points of~$\hat{R}$, i.e.\ over the negligible points. We assume that the double cover is locally given by
$$(x,y,z)\mapsto (y,z),$$
and the singularity lies in the origin. Then Theorem~\ref{ADE-sing} tells us that the surface singularities are locally described by
\begin{equation*}
\begin{aligned}
A_m: & \text{  } & x^2+y^2+z^{m+1} & =  0 &  \text{  } & \text{where }m\geq 1,\\
D_m: & \text{  } & x^2+y^2z+z^{m-1} & =  0 & \text{  } & \text{where }m\geq 4,\\
E_6: & \text{  } & x^2+y^3+z^{4} & =  0, & \\
E_7: & \text{  } & x^2+y^3+yz^{3} &  =  0, & \\
E_8: & \text{  } & x^2+y^3+z^{5} & =  0. & 
\end{aligned}
\end{equation*}
We call such a singularity a \emph{rational double point of type~$A_m$, $D_m$, $E_6$, $E_7$ or~$E_8$}. Note that any negligible singularity~$p$ satisfies~$2\lfloor\frac{m_p}{2}\rfloor=2$. Therefore the minimal even resolution~$\psi':\widetilde{P}\rightarrow\hat{P}$ of~$\hat{R}$ describes now a unique resolution~$\sigma:X\rightarrow\hat{X}$ of the rational double points in~$\hat{X}$ as shown in Figure~\ref{resdoublepoint}.
\begin{figure}[!htbp]
	\centering
	\begin{tikzpicture}[every node/.style={anchor=center},>=stealth']
	\matrix (m) [matrix of math nodes, row sep=3em, column sep=4em]
	{\widetilde{X} & \widetilde{P}\supseteq\widetilde{R}  \\
	 \hat{X} & \hat{P}\supseteq\hat{R} \\};
	\draw[->] (m-1-1) -- node[left] {$\rho'$} (m-2-1);
	\draw[->] (m-1-1) -- node[above] {$\widetilde{\theta}$} (m-1-2);
	\draw[->] (m-1-2) -- node[left] {$\psi'$} (m-2-2);
	\draw[->] (m-2-1) -- node[above] {$\hat{\theta}$} (m-2-2);
	\end{tikzpicture}
	\caption{\label{resdoublepoint}The resolution of the rational double points.}
\end{figure}
\end{Rem}
\begin{Prop}[\cite{compactcomplex}]
\label{propdoublepoints}
The exceptional curve of the minimal resolution of a rational double point of type~$A_m$, $D_m$, $E_6$, $E_7$ and~$E_8$ consists of a configuration of $(-2)$-curves. The index of the type equals the number of~$(-2)$-curves in the resolution.
\end{Prop}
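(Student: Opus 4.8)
The plan is to treat the two assertions separately: first that every component of the exceptional locus of the minimal resolution is a $(-2)$-curve, which is a single uniform argument valid for all five types, and second that the number of these components matches the index of the type, which is a type-by-type computation.

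For the first assertion I would use that each of the surface normal forms recalled above is a hypersurface in~$\CC^3$, hence Gorenstein with $K_{\hat X}$ Cartier and in fact trivial near the singular point~$q$. Rational double points are rational singularities, so the reduced exceptional divisor $E_q=\sum_i E_i$ of the minimal resolution $\sigma\colon\widetilde X\to\hat X$ is a connected configuration of smooth rational curves meeting transversally, with dual graph a tree. The decisive input is that the minimal resolution of a Du~Val singularity is \emph{crepant}, $K_{\widetilde X}=\sigma^*K_{\hat X}$; granting this, the projection formula gives $K_{\widetilde X}.E_i=\sigma^*K_{\hat X}.E_i=0$ since $\sigma_*E_i=0$, and the adjunction formula $2g(E_i)-2=E_i^2+K_{\widetilde X}.E_i$ together with $g(E_i)=0$ forces $E_i^2=-2$ for every~$i$. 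This proves that the exceptional curve is a configuration of $(-2)$-curves.

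For the second assertion I would resolve each of the five normal forms by an explicit sequence of blow-ups, keeping track at every stage of the strict transform of the branch locus, the multiplicities at the infinitely near points, and the self-intersections of the curves produced. For $A_m$ the equation $x^2+y^2+z^{m+1}=0$ unfolds into a chain of~$m$ rational curves, reproducing the $A_m$ Dynkin diagram, and the $D_m$, $E_6$, $E_7$, $E_8$ cases are handled in the same spirit and yield the corresponding Dynkin diagrams, whose vertex counts are $m$ and $6,7,8$. Alternatively, since each rational double point is a quotient $\CC^2/G_q$ by a binary polyhedral group $G_q\subset SL(2,\CC)$ (cf.\ the discussion before Theorem~\ref{miyaoka}), I would invoke the McKay correspondence, which matches the nontrivial irreducible representations of~$G_q$ with the components~$E_i$ and identifies the dual graph with the associated ADE diagram, so that the counts may simply be read off from the classification.

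The main obstacle is the bookkeeping in the second assertion for the $D$ and exceptional types: one must verify that the explicit resolution terminates exactly after the asserted number of $(-2)$-curves has appeared and that no superfluous blow-up survives in the minimal resolution, which requires careful control of the multiplicities at each infinitely near point. The first assertion, by contrast, is immediate once crepancy of the minimal resolution is granted, and it is precisely this crepancy---not any individual intersection computation---that is the structural reason every exceptional component has self-intersection~$-2$.
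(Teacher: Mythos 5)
The paper does not prove this proposition: it is quoted verbatim from the cited reference \cite{compactcomplex} (where it appears in the treatment of rational double points), so there is no in-paper argument to compare against. Your sketch is the standard textbook proof and is correct: rationality plus Gorenstein-ness and minimality force the resolution to be crepant, and crepancy plus adjunction forces $E_i^2=-2$; the count of components then comes from the explicit blow-up of each normal form (or from the McKay correspondence). The only point worth tightening is that you take crepancy as a black box while it is really the crux: the clean non-circular route is that $\sigma_*\omega_{\widetilde X}=\omega_{\hat X}$ (rationality) gives discrepancies $\geq 0$, while minimality, adjunction on the smooth rational exceptional components and negative definiteness of the intersection matrix give discrepancies $\leq 0$ --- once you say this, your first paragraph is complete and your second paragraph's explicit resolutions are only needed for the count, not for the self-intersections.
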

The fibrations~$f:X\rightarrow C$ associated to a genus~$g$ datum is not necessarily semi-stable but we can use a sufficient criterion.
\begin{Prop}
\label{propsemistableAM}
Let~$f:X\rightarrow C$ be a semi-stable, hyperelliptic fibration and let~$(P,R,\mathcal{L})$ be the corresponding genus~$g$ datum. Then~$\hat{R}$ contains as negligible singularities only singularities of type~$A_m$ for~$m\geq 1$ and all ramification points in~$\hat{R}$ of the morphism~$\hat{R}\rightarrow C$ are simple ramification points.
\end{Prop}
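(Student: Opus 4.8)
The plan is to characterise semi-stability fibrewise and then to localise. Recall that $f$ is semi-stable exactly when every fibre of $X$ is reduced with at worst nodes. Over a point of $C$ above which $\hat R$ is smooth and meets the ruling transversally in $2g+2$ distinct points, the double cover $\hat\theta:\hat X\to\hat P$ is smooth and the fibre of $f$ is a smooth hyperelliptic curve. Hence the only fibres that can fail to be semi-stable lie over the images of the ramification points of $\hat R\to C$ and of the singular points of $\hat R$, and it suffices to treat these two kinds of special points separately. Note that the singular points of $\hat R$ are precisely its negligible singularities, classified by Theorem~\ref{ADE-sing} into the types $A_m$ (multiplicity two) and $D_m,E_6,E_7,E_8$ (multiplicity three).

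First I would settle the ramification points, which yield the second assertion. Let $p$ be a smooth point of $\hat R$ over $z_0\in C$ at which $\hat R$ is tangent to the ruling, and let $e\geq 2$ be the ramification index of $\hat R\to C$ at $p$. Choosing a fibre coordinate $y$ and the pulled-back base coordinate $z$, one may write $\hat R=\{z=y^{e}\}$ near $p$, so that the double cover is locally $x^2=z-y^{e}$, a smooth surface whose fibre over $z_0$ is $\{x^2+y^{e}=0\}$. This is a node when $e=2$ and a strictly worse curve singularity (of type $A_{e-1}$) once $e\geq 3$; since $f$ is relatively minimal this singularity cannot be removed by a fibrewise blow-down. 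Thus semi-stability forces $e=2$, that is, every ramification point of $\hat R\to C$ is simple.

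For the first assertion I would argue through the exceptional configurations. By the Remark following Theorem~\ref{ADE-sing} a negligible singularity of $\hat R$ produces on $\hat X$ a rational double point of the same type, and by Proposition~\ref{propdoublepoints} its minimal resolution inserts the corresponding Dynkin configuration of $(-2)$-curves, all contained in the fibre over the image point. The crux is a valence bound forced by reducedness: if $E$ is a smooth rational $(-2)$-curve contained in a reduced nodal fibre $F$, then $E\cdot(F-E)=E\cdot F-E^2=2$, and nodality of $F$ makes this a transverse intersection, so $E$ meets the other components in exactly two points and hence is adjacent to at most two of them. Therefore a connected configuration of $(-2)$-curves inside a semi-stable fibre has no trivalent vertex and must be a chain, i.e.\ of type $A$. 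Since each of $D_m,E_6,E_7,E_8$ has a trivalent vertex, these cannot occur, and only $A_m$ survives.

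The step I expect to be the main obstacle is to make precise that the Dynkin configuration produced on $\widetilde X$ by resolving the double point really descends to a configuration of genuine $(-2)$-curves inside a fibre of the \emph{relatively minimal} model $f:X\to C$, to which the valence bound applies. Concretely, one must verify that the vertical $(-1)$-curves contracted by $\rho:\widetilde X\to X$, which come from the isolated fixed points of the hyperelliptic involution, are disjoint from the resolution curves, so that the contraction neither deletes a curve of the configuration nor raises a self-intersection above $-2$. As an independent check of the mechanism, a multiplicity-three point of $\hat R$ forces $(\hat R\cdot\Gamma)_p\geq 3$ for the ruling $\Gamma$ through $p$, so the double-cover fibre carries a singularity $x^2=y^{\geq 3}$ before resolution; tracing this through the even resolution reproduces exactly the non-$A$ configuration detected by the valence count, confirming that the types $D_m,E_6,E_7,E_8$ are incompatible with semi-stability.
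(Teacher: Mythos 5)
Your proposal is correct and follows the same basic strategy as the paper: an explicit local computation at the ramification points of $\hat{R}\to C$, and an examination of the A--D--E resolution configurations to rule out the non-$A_m$ types. The one genuine difference is in how the second step is closed. The paper observes that the dual graph of a $D_m$, $E_6$, $E_7$ or $E_8$ configuration contains a $(-2)$-curve meeting the other components of the fibre in exactly \emph{one} point, contradicting the requirement in the definition of semi-stability that every smooth rational component meet the rest of the fibre in at least two points. You instead derive the upper bound $E\cdot(F-E)=-E^2=2$ for a $(-2)$-curve in a reduced nodal fibre, conclude that the dual graph has no trivalent vertex, and kill the $D$ and $E$ types by their fork vertex. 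Both arguments are sound and exploit complementary features of the same Dynkin diagrams; yours has the small advantage of not requiring one to identify which exceptional curve is the valence-one culprit and how it sits relative to the strict transform of the fibre, since the valence-$\le 2$ bound is purely numerical. Your treatment of the ramification points ($x^2+y^e=0$ is a node iff $e=2$) supplies the computation that the paper dismisses as ``clear.'' Finally, the subtlety you flag --- that the $(-2)$-configuration must survive the contraction $\rho:\widetilde{X}\to X$ of vertical $(-1)$-curves, i.e.\ that these $(-1)$-curves are disjoint from the exceptional locus of the double-point resolution --- is not addressed in the paper's proof either; it is a standard verification, but you are right that a complete argument should include it.
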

\begin{proof}
It is clear that all ramification points in~$\hat{R}$ must be simple. So let us assume that~$\hat{R}$ contains a negligible singularity. The resolution graph of any negligible singularity other than of type~$A_m$ contains a $(-2)$-curve in~$X$ that does intersect the other components in exactly one point (see Remark~\ref{ADE-sing}). In this case the corresponding fiber of~$f$ is not semi-stable.
\end{proof}

\begin{Ex}[{\cite[Example 5.2]{beauminimum}}] 
\label{bspbeauville}
	Let us construct an example of a semi-stable hyperelliptic fibration attaining equality in the lower slope bound. Let~$\varphi:C\rightarrow\P^1$ be a cover of degree~$n$, ramified over the branch points~$R\subseteq\P^1$ and let~$u$ be an automorphism of~$\P^1$ such that~$R$ contains no fixed point of~$u$ but satisfies~$u(R)\subseteq R$. We assume further that all ramification points of~$\varphi$ are of index~$2$. Then in~$C\times\P^1$ we regard the two divisors~$\Gamma_{\varphi}$ and~$\Gamma_{u\circ\varphi}$, defined by the two graphs of~$\varphi$ and~$u\circ\varphi$. Since the automorphism group of~$\P^1$ is~$\mathrm{PGL}(2,\mathbb{C})$, the divisors are linear equivalent, i.e.~$\mathcal{O}_{C\times\P^1}(\Gamma_{\varphi}\cup\Gamma_{u\circ\varphi})=\mathcal{L}^2$ for some~$\mathcal{L}\in\Pic(C\times\P^1)$. We can therefore construct a double cover $X'\rightarrow C\times\P^1$, ramified over~$\Gamma_{\varphi}\cup\Gamma_{u\circ\varphi}$. We denote with~$f':X'\rightarrow\P^1$ the composition of the double cover and the second projection. For~$t\in\PP^1$ the fiber~$(f')^{-1}(t)$ is a double cover of the curve~$C$, ramified along the divisor~$\varphi^{-1}(t)+\varphi^{-1}(u^{-1}(t))$. Since this divisor is almost everywhere reduced, except for the points in~$R$ and the two fixed points of~$u$, where it has multiplicity~$2$, the fiber has only rational singularities, i.e.\ it is a stable curve. By the Riemann-Hurwitz formula we calculate the fiber genus as~$g=2g_C+n-1$. Let~$X\rightarrow X'$ be the canonical resolution of the singularities. These singularities appear only over~$\Gamma_{\varphi}\cap\Gamma_{u\circ\varphi}$, so if we consider the now semi-stable fibration~$f:X\rightarrow\P^1$, we have~$|R|+2$ singular fibers.
\\
Let us restrict to the case~$C=\P^1$. The graph~$\Gamma_{\varphi}$ is a divisor of type~$(n,1)$ and therefore we can calculate the self-intersection~$\Gamma_{\varphi}^2=2n$. Since the canonical divisor is $K_{\P^1\times\P^1}=p_1^*K_{\P^1}+p_2^*K_{\P^1}$ we get~$\Gamma_{\varphi}.K_{\P^1\times\P^1}=-2n-2$. By applying Lemma~\ref{lemminv} we finally find the invariants of the fibration:
	\begin{equation*}
	\begin{aligned}
	\chi_f=1+(g-1), &\text{  } & \omega_f^2=8-4n+8(g-1), &\text{ and } & \delta_f=4+4n+4(g-1).
	\end{aligned}
	\end{equation*}
	Using the morphism~$\varphi$ of degree~$4$, defined by~$\varphi(t)=t^2+1/t^2$ (as a quotient of~$\P^1$ by~$\Z/2\Z\times\Z/2\Z$), we get a genus~$3$ fibration with~$|R|=3$ and we conclude that its speed is~$L_f=2$ and its slope~$\lambda_f=8/3$.
\end{Ex}

\begin{Rem}
In~\cite{beauszpiro}, Beauville referred to~{\cite[Example~5.2]{beauminimum}} as an example of a semi-stable genus~$3$ fibration with five singular fibers such that~$\delta_f=40$ without giving any further details. If this were true then it would yield an example of high speed, contradicting Noether's formula. However, the number of double points of this fibration is~$\delta_f=28$.
\end{Rem}

\subsection{Hyperelliptic fibrations with high speed}
\label{hypexamples}
Using a genus~$g$ datum (see Definition~\ref{defgenusgdatum}), i.e.\ a branch divisor on a ruled surface, we can construct interesting examples of hyperelliptic fibrations. As the invariants of the fibration, namely~$\omega_f^2$ and~$\chi_f$, depend only on the bidegree of the branch divisor and its type of singularities by Lemma~\ref{lemminv}, we can directly compute slope and speed and find examples with high speed. All examples here are constructed on the ruled surface~$P:=\P^1\times C$, which has invariant~$e=0$. The main theorem of this section is then the following.

\thmexistenz*
\begin{proof}
We will construct the particular examples for odd genus in Example~\ref{exampleoddgenus}, for even genus~$g\geq 4$ in Example~\ref{exampleevengenus}, for genus~$2$ in Example~\ref{examplegenus2}, for genus~$3$ in Example~\ref{examplegenus3} and for~$v)$ in Example~\ref{examplegenus8}.
\end{proof}

The existence of a branched cover~$C\rightarrow\widetilde{C}$ between curves with given branching data is a non-trivial question, known as the Hurwitz existence problem. We follow here~\cite{hurwitzproblem}, which provides a nice overview over this problem. As we are working over the base field~$\CC$ all curves are in fact Riemann surfaces, hence in particular orientable.
\begin{Defi}
Let~$C$ and~$\widetilde{C}$ be smooth curves, let~$m\geq 0$ and~$d\geq 2$ be integers and let~$(d_{ij})_{j=1,\dots,m_i}$ be a partition of~$d$. We call the $5$-tuple $(C,\widetilde{C},m,d,(d_{ij}))$, where~$i=1,\dots,m$, the \emph{branch datum} of a candidate branched cover. We further associate the number~$\widetilde{m}=m_1+\dots+m_n$ to such a datum.
\end{Defi}
We say that a branch datum is \emph{compatible}, if the following two conditions hold
\begin{itemize}
\item[i)] $2-2g_C-\widetilde{m}=d\cdot(2-2g_{\widetilde{C}}-m)$,
\item[ii)] $m\cdot d-\widetilde{m}$ is even.
\end{itemize}
It is easy to see that any branched cover defines a compatible branch datum, but it is still working process in which cases a compatible branch datum can actually realize a branched cover, i.e.\ there is a branched cover~$C\rightarrow\widetilde{C}$ of degree~$d$ branched over~$m$ points of~$\widetilde{C}$, such that the preimage of the $i$-th branch point consists of~$m_i$ points with local degree~$(d_{ij})$. We use here the following theorem.
\begin{Prop}[{\cite[Proposition~5.2]{zheng}}]
\label{existencebranchcover}
Let~$(C,\widetilde{C},m,d,(d_{ij}))$ be a compatible branch datum such that~$\widetilde{C}=\P^1$ and one of the partitions~$(d_{ij})$ is given by~$(d)$ only. Then this datum realizes a branched cover.
\end{Prop}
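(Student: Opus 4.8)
The plan is to translate realizability into a purely group-theoretic factorization problem via the Riemann existence theorem, and then to exploit the totally ramified point to dispose of the transitivity requirement. Recall that a branched cover $C\to\widetilde{C}=\P^1$ of degree $d$, branched over $m$ points with the prescribed local degrees, is equivalent (after fixing a base point and standard generators $\gamma_1,\dots,\gamma_m$ of the punctured sphere, subject to $\gamma_1\cdots\gamma_m=1$) to a tuple of permutations $\sigma_1,\dots,\sigma_m\in S_d$ such that $\sigma_i$ has cycle type $(d_{i1},\dots,d_{im_i})$, the product satisfies $\sigma_1\cdots\sigma_m=\mathrm{id}$, and $\langle\sigma_1,\dots,\sigma_m\rangle$ acts transitively on $\{1,\dots,d\}$. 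Transitivity guarantees that the cover is connected, and the genus of the resulting connected cover is then forced to equal $g_C$ by Riemann--Hurwitz, which is precisely what compatibility condition~i) records; condition~ii) is exactly the statement that this $g_C$ is an integer. So it remains to produce such a tuple.

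Second, I would use the hypothesis that one partition equals $(d)$. After reindexing, assume $\sigma_m$ is forced to be a single $d$-cycle. Then $\langle\sigma_m\rangle$ already acts transitively, so the transitivity condition for the full tuple holds automatically, no matter how the other factors are chosen. Writing $\tau:=\sigma_m^{-1}$ (again a $d$-cycle), the defining relation becomes $\sigma_1\cdots\sigma_{m-1}=\tau$. Thus the entire problem collapses to the following: \emph{given partitions $\lambda^{(1)},\dots,\lambda^{(m-1)}$ of $d$, with $\lambda^{(i)}$ having $k_i$ parts, realize a fixed $d$-cycle $\tau$ as a product $\sigma_1\cdots\sigma_{m-1}$ with $\sigma_i$ of type $\lambda^{(i)}$.} A short computation shows that $\sum_{i=1}^{m-1}(d-k_i)$ equals the total ramification away from the $(d)$-point, which by Riemann--Hurwitz is $d-1+2g_C$; hence the hypotheses $g_C\ge 0$ and condition~ii) translate into the two numerical constraints $\sum_i(d-k_i)\ge d-1$ and $\sum_i(d-k_i)\equiv d-1\pmod 2$.

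Third, I would prove the combinatorial \emph{factorization lemma}: these two numerical constraints are sufficient for $\tau$ to factor as above. The natural tool is the merge/split behaviour of transpositions, namely that right-multiplying a permutation by a transposition $(a\,b)$ either merges the cycles of $a$ and $b$ (when they differ) or splits one cycle into two (when they agree), changing the number of cycles by exactly one. Each $\sigma_i$ admits a minimal factorization into $d-k_i$ transpositions whose connectivity pattern has components of the prescribed sizes, so that $\sigma_i$ indeed has type $\lambda^{(i)}$; concatenating these, $\sigma_1\cdots\sigma_{m-1}$ becomes a product of $N=d-1+2g_C$ transpositions. One then argues, by induction on $m-1$ (peeling off single factors and, when the defect $d-k_i$ is odd, pairing two factors to keep parities aligned), that the representatives $\sigma_i$ can be chosen so that the running product sweeps out a single $d$-cycle, the surplus of $2g_C$ transpositions being absorbed as $g_C$ split-then-remerge pairs. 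Equivalently one may first reduce to the classical case of simple branch points plus one totally ramified point --- the setting of simple Hurwitz numbers --- for which transitive transposition factorizations of a $d$-cycle of every admissible length are well known to exist, and then recombine consecutive simple branch points into the prescribed partitions.

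The step I expect to be the genuine obstacle is the factorization lemma, and within it the requirement that each block of transpositions reproduce its cycle type $\lambda^{(i)}$ \emph{exactly} (the correct part sizes, not merely the correct number of parts $k_i$) while the overall product is simultaneously constrained to be a single $d$-cycle. Controlling these two demands at once forces the careful, parity-sensitive bookkeeping in the induction: a factor with odd defect cannot individually be split off against a $d$-cycle target by a parity count, and so must be handled together with a second such factor. Everything else --- the Riemann existence reduction and the automatic transitivity --- is formal once the totally ramified point is singled out.
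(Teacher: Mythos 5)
The paper offers no proof of this proposition at all: it is imported verbatim as \cite[Proposition~5.2]{zheng}, so there is no in-paper argument to compare yours against. Judged on its own terms, your reduction is the standard and correct one: the Riemann existence theorem turns realizability into finding $\sigma_1,\dots,\sigma_m\in S_d$ of the prescribed cycle types with $\sigma_1\cdots\sigma_m=\mathrm{id}$ and transitive image, the totally ramified point makes transitivity automatic, and your translation of the compatibility conditions into $\sum_i(d-k_i)=d-1+2g_C$ together with the sign constraint is accurate.

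The genuine gap is exactly where you predicted it: the factorization lemma is asserted, not proved, and the two devices you offer for it do not close as described. The induction that ``peels off single factors'' breaks at the first step, because after removing $\sigma_1$ the remaining factors must multiply to $\sigma_1^{-1}\tau$, which is no longer a $d$-cycle, so the inductive hypothesis (which only concerns factorizations of $d$-cycles) does not apply; one needs a strictly stronger statement about factoring an \emph{arbitrary} target permutation subject to the correct defect and parity conditions. Already the case $m-1=2$ of your lemma is Boccara's theorem on products of two permutations of prescribed cycle types equalling an $n$-cycle, which is a non-trivial result with no one-line proof. The alternative route via simple branch points has the same difficulty relocated: each block of $d-k_i$ transpositions is forced to be a \emph{minimal} factorization of $\sigma_i$, i.e.\ its transpositions must form a forest whose components have sizes exactly the parts of $\lambda^{(i)}$, while simultaneously the concatenation of all $d-1+2g_C$ transpositions must multiply to a single $d$-cycle; arranging both constraints at once (a Goulden--Jackson/cactus-type construction for $g_C=0$, plus an argument for absorbing the $2g_C$ surplus edges) is precisely the content of the proposition and is left unestablished. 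Until that combinatorial lemma is actually proved, the argument is a correct reduction rather than a proof.
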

By Proposition~\ref{existencebranchcover} it is clear that~$\P^1\cong\CC\cup\{\infty\}$ has a ramified cover of degree~$n>0$ over itself, which is totally ramified at exactly two points. Let~$g\geq 2$ be a number, then there exists a double cover~$a_1:\P^1\rightarrow\P^1$ which is ramified at~$1$ and~$\infty$ and~$a_2:\P^1\rightarrow\P^1$ a cover of degree~$g+1$, totally ramified at the two points of~$a_1^{-1}(0)$. We take the composition~$a:=a_2\circ b_1:\P^1\rightarrow\P^1$, which is a~$(2g+2:1)$-cover over the projective line.
\subsubsection{Odd genus} 
\begin{Ex}
\label{exampleoddgenus}
Let~$g\geq 3$ be an odd number and let
$$(C,\P^1,3,4,((4),(4),(2,2)))$$
be a compatible branch datum. By Proposition~\ref{existencebranchcover}, this branch datum is realizable, we denote it by~$b:C\rightarrow\P^1$. We take the branch points at~$0$, $1$ and~$\infty$ in~$\P^1$, whose preimages have local degree~$((4),(4),(2,2))$. We now consider the divisor~$R$ that is described by~$a$ and~$b$ in~$\P^1\times C$, namely
$$R=\{(y,z)\in\P^1\times C|a(y)=b(z)\}\subseteq\P^1\times C$$
and we look at the projection on the second component, i.e.\ on~$C$.
\begin{figure}[!htbp]
	\centering
	\begin{tikzpicture}[every node/.style={anchor=center},>=stealth']
	\matrix (m) [matrix of math nodes, row sep=3em, column sep=3em]
	{R & \\
	 \P^1\times C & C \\
		  \P^1 & \P^1 \\ };
	\draw[->] (m-1-1) -- node[left] {$\subseteq$} (m-2-1);
	\draw[->] (m-2-1) -- node[below] {$pr_2$} (m-2-2);
	\draw[->] (m-2-1) -- node[left] {$pr_1$} (m-3-1);
	\draw[->] (m-3-1) -- node[above] {$a$} (m-3-2);
	\draw[->] (m-2-2) -- node[right] {$b$} (m-3-2);
	\draw[->] (m-1-1) -- node[right] {$pr_2\mid_{R}$} (m-2-2);
	\end{tikzpicture}
	\caption{\label{figbranchdivisor}}
\end{figure}
\\Our divisor~$R$ has a bidegree of~$(2g+2,4)$ on the ruled surface~$pr_2:\P^1\times C\rightarrow C$ (see Figure~\ref{figbranchdivisor}). The map~$pr_2|_{R}:R\rightarrow\P^1$ can be seen as the pullback of~$a$ under the base change~$b$ and therefore the set of critical points on the projective line is exactly~$\Delta:=b^{-1}(\{0,1,\infty\})$. Moreover we then have~$s:=|\Delta|=4$.
\\
Locally~$R$ is defined as~$a(y)=b(z)$ and so over~$b^{-1}(1)$ the divisor~$R$ has~$g+1$ singular points defined by~$y^2-z^4=0$, whereas over the two points of~$b^{-1}(\infty)$ there are~$g+1$ points defined by~$y^2-z^2=0$. This is precisely the definition of a singularity of type~$A_{3}$, respectively of type~$A_{1}$ (see Theorem~\ref{ADE-sing}) and in particular these singularities are negligible. The remaining two singularities are living over~$b^{-1}(0)$ and are defined by~$y^{g+1}-z^4=0$. Hence the even divisor~$R$ defines a genus~$g$ datum and therefore a fibration~$f:X\rightarrow C$ of genus~$g$. The set of critical points, whose fibers are singular, is given by~$\Delta$. Let us recall Lemma~\ref{lemminv} which says that
$$\chi_{f}=\chi(\mathcal{O}_{X})-(g-1)(g_C-1)=\frac{1}{2}gn-\frac{1}{2}\sum\limits_{i=1}^r k_i\left(k_i-1\right),$$
where~$k_i:=\left\lfloor\frac{m_{p_i}}{2}\right\rfloor$. Hence all we need to know to calculate the slope of this fibration is the number of non-negligible singularities and their multiplicity. Let us focus on the two singular points over~$b^{-1}(0)$, locally defined by~$y^{g+1}-z^4=0$, hence of multiplicity four. We need to apply an even blow-up~$\psi_1:P_1\rightarrow P$ to these two singularities. We consider the blow-up locally as~$\widetilde{\CC^2}\rightarrow\CC^2$ centered at~$p_1=(0,0)$ by
$$\widetilde{\CC^2}=\{(y,z),[u:v]\in\CC^2\times\P^1\text{ }|\text{ }yv=zu\}.$$
In the chart~$u=1$ we get 
$$y^{g+1}-y^4v^4=y^4(y^{g-3}-v^{4})=0.$$
The component~$z^4=0$ is corresponding to~$2\lfloor\frac{m_{p_1}}{2}\rfloor E_1=4E_1$, where~$E_1$ is the exceptional divisor of the blow-up. The even inverse image~$R_1$ is by definition described locally by~$y^{g-3}-v^{4}=0$. If~$g=3$, then the even inverse image is smooth. If~$g=5$, then on the intersection point of~$R_1$ and~$E_1$, namely on the point~$p_2=(0,0),[0,1]$, we have a unique singularity of multiplicity two, more concretely a singularity of type~$A_3$. If~$g\geq 7$, we blow up further the singularity of multiplicity four on~$R_1$ by~$\psi_2:P_1\rightarrow P_2$, where we assume that~$R_1$ is given by~$y^{g-3}-v^{4}=0$. We continue this process for both singular points of multiplicity~$4$ on~$R$ until the reduced inverse even image is smooth or all singular points living on the reduced inverse even image are negligible. Let~$s_4(f)$ be the number of all such singularities of multiplicity four, including infinitely near ones. Then by just counting we get
$$s_4(f)=2\cdot\left(\left\lfloor\frac{g+1}{4}\right\rfloor\right).$$
The resulting fibration is semi-stable by Proposition~\ref{propsemistableAM}. Now we get~$\chi_f=2g-2k$, where~$k:=\left\lfloor\frac{g+1}{4}\right\rfloor$. Note in addition that by the properties of a compatible branch datum we get that~$2g_C-2+s=4$. Hence we calculate as speed~$L_f=g-k$.
\end{Ex}

\begin{Ex}
\label{examplegenus3}
Let~$g=3$ and consider the compatible branch datum
$$(C,\P^1,3,3,((3),(3),(3))).$$
Here~$C$ is a smooth curve of genus~$g_C=2$ and we denote by~$b:C\rightarrow\P^1$ the corresponding cover, branched over~$0$, $1$ and~$\infty$. We again take the divisor
$$R':=\{(y,z)\in\P^1\times C|a(y)=b(z)\}\subseteq\P^1\times C$$
in the ruled surface defined by the projection~$pr_2:\P^1\times C\rightarrow C$. Let~$F_0$ be the fiber of~$pr_2$ over the point~$b^{-1}(0)$, then the divisor~$R:=R'+F_0$ is of bidegree~$(8,4)$ and induces a semi-stable fibration~$f:X\rightarrow C$ of genus~$3$. Over~$b^{-1}(0)$ there are two singular points of multiplicity four, whose infinitely near points are smooth. Hence we have
$$L_f=\frac{8}{3},$$
since~$2g_C-2+s=3$.
\end{Ex}

\begin{Ex}
Let~$g\geq 2$ be a number such that~$g\equiv 1\mod 4$. Let~$b:\P^1\rightarrow\P^1$ be a cover of degree~$4$, ramified over the two points~$0$ and~$\infty$. The divisor
$$R=\{(y,z)\in\P^1\times\P^1|a(y)=b(z)\}\subseteq\P^1\times\P^1$$
is an even divisor of bidegree~$(2g+2,4)$ of the ruled surface~$pr_1:\P^1\times\P^1\rightarrow\P^1$ and defines therefore a relatively minimal fibration~$\widetilde{f}:X\rightarrow\P^1$ of genus~$g$. The number of singular fibers is here~$s=1+1+4=6$. There are only two non-negligible singular points over~$b^{-1}(0)$, locally defined by~$y^{g+1}-z^4=0$. By blowing up we calculate
$$s_4(f)=2\cdot\left\lfloor\frac{g}{4}\right\rfloor$$
which implies
$$\chi_f=2g-2\cdot\left\lfloor\frac{g}{4}\right\rfloor.$$
Note that the fibration is again semi-stable, hence we calculate as speed
$$L_f=g-\left\lfloor\frac{g}{4}\right\rfloor.$$
\end{Ex}
\begin{Ex}
Let~$g\geq 7$ be a number such that~$g\equiv 1\mod 6$. We consider by~$b:\P^1\rightarrow\P^1$ a cover of degree~$6$, totally ramified at~$0$ and~$\infty$.
The divisor
$$R=\{(y,z)\in\P^1\times\P^1|a(y)=b(z)\}\subseteq\P^1\times\P^1$$
is an even divisor of bidegree~$(2g+2,6)$ of the ruled surface~$pr_1:\P^1\times\P^1\rightarrow\P^1$ and defines therefore a relatively minimal fibration~$f
:\widetilde{X}\rightarrow\P^1$ of genus~$g$. The number of singular fibers is here~$s=1+1+6=8$. There are only two non-negligible singular points over the two points of~$b^{-1}(0)$, locally defined by~$y^{g+1}-z^6=0$. As before we let~$s_6(f)$ be the number of all such singularities of multiplicity six, including infinitely near ones. By blowing up we calculate
$$s_6(f)=2\cdot\left\lfloor\frac{g}{6}\right\rfloor$$
which implies
$$\chi_f=3g-6\cdot\left\lfloor\frac{g}{6}\right\rfloor.$$
Note that the fibration is again semi-stable, hence we calculate as speed
$$L_f=g-2\cdot\left\lfloor\frac{g}{6}\right\rfloor.$$
\end{Ex}

\subsubsection{Even genus}
\begin{Ex}
\label{exampleevengenus}
To construct a fibration of even genus~$g\geq 4$, we take the compatible branch datum
$$(C,\P^1,3,2g+2,((g+1,g+1),(2g+2),(2g+2)))$$
which realizes a branched cover~$b:C\rightarrow\P^1$ by Proposition~\ref{existencebranchcover}. Let~$0$, $1$ and~$\infty$ in~$\P^1$ be the branch points whose preimages have as local degree~$((g+1,g+1),(2g+2),(2g+2))$. Now the branch divisor~$R$ is given by
$$R=\{(y,z)\in\P^1\times C|a(y)=b(z)\}\subseteq\P^1\times C$$
where again we consider the projection to the second component. Therefore~$R$ has a bidegree of~$(2g+2,2g+2)$ on the ruled surface~$pr_2:\P^1\times C\rightarrow C$ and the set of critical points is~$\Delta:=b^{-1}(\{0,1,\infty\})$, with~$s:=|\Delta|=4$.
\\
Again~$R$ is locally defined as~$a(y)=b(z)$ and so over~$b^{-1}(1)$ and~$b^{-1}(\infty)$ the divisor~$R$ has each~$g+1$ singular points defined by~$y^2-z^{2g+2}=0$. This is precisely the definition of a singularity of type~$A_{2g+1}$. The remaining two singularities are living over the two points of~$b^{-1}(0)$ and are defined by~$y^{g+1}-z^{g+1}=0$. Again~$R$ is even and defines a genus~$g$ datum, hence a relatively minimal fibration~$f:X\rightarrow C$ of genus~$g$. The set of critical points, whose fibers are singular, is given by~$\Delta$. 
After applying an even blow-up on the singularity~$y^{g+1}-z^{g+1}=0$, the even inverse image~$R_1$ is smooth. Hence the singularity is of odd multiplicity~$g+1$ and neither the second component of a singularity of type~$(g-1\rightarrow g-1)$ nor the first component of a singularity of type~$(g+1\rightarrow g+1)$. We say that this tuple of singular points is a singularity is of type~$s_g$ and in particular we get~$s_g(f)=4$. Now by Lemma~\ref{lemminv} we get that
$$\omega_f^2=2g^2+8g-12 \text{   and   } \chi_f=\frac{g^2}{2}+2g.$$
By the same argument as before~$2g_C-2+s=2g+2$ and so in terms of slope and speed we calculate
$$\lambda_f=4-\frac{24}{g^2+4g} \text{   and   } L_f=g-\frac{g^2-2g}{2g+2}.$$
\end{Ex}

\begin{Ex}
\label{examplegenus2}
Let~$g=2$ and likewise to Example~\ref{examplegenus2} we consider the branch datum
$$(C,\P^1,3,5,((5),(5),(5)))$$
with the associated branched cover~$b:C\rightarrow\P^1$ and define
$$R':=\{(y,z)\in\P^1\times C|a(y)=b(z)\}\subseteq\P^1\times C$$
as divisor on the ruled surface~$pr_2:\P^1\times C\rightarrow C$. Let~$F_0$ be the fiber of~$pr_2$ over the point~$b^{-1}(0)$. Then the divisor~$R:=R'+F_0$ is of bidegree~$(6,6)$ and defines a semi-stable fibration~$X\rightarrow C$. The divisor has two singular points of multiplicity three over~$b^{-1}(0)$. By applying the local even blow-up, the even inverse image~$R_1$ consists of two components, the first is one copy of the exceptional curve and the latter the strict transform of~$R$. On their intersection point, namely on the point we have a unique singularity of multiplicity four. This allows us to conclude~$\chi_f=4$ and therefore as speed
$$L_f=2\cdot\frac{\chi_f}{2g_C-2+s}=2\cdot\frac{4}{5}=\frac{8}{5}.$$
\end{Ex}

\begin{Ex}
\label{examplegenus8}
Let~$g\equiv 0\mod 4$ and let
$$(C,\P^1,3,4,((4),(4),(2,2)))$$
be the same compatible branch datum as in Example~\ref{exampleoddgenus} defining the same cover~$b:C\rightarrow\P^1$. We also take as divisor in~$P:=\P^1\times C$ here
$$R=\{(y,z)\in\P^1\times C|a(y)=b(z)\}\subseteq\P^1\times C.$$
Blowing evenly up $k$-times the two singularities defined by~$y^{g+1}-z^4=0$ resolves this singularities, where
$$k:=\left\lfloor\frac{g+1}{4}\right\rfloor.$$
Hence by the same calculation as before we get~$s_4(f)=2\cdot(\lfloor\frac{g+1}{2}\rfloor)$ and therefore
$$\chi_f=2g-2k\text{ as well as } L_f=g-k.$$
\end{Ex}

\printbibliography

\end{document}